\newtheorem{theorem}{Theorem}[section]
\newtheorem{lemma}[theorem]{Lemma}
\newtheorem{proposition}[theorem]{Proposition}
\newtheorem{corollary}[theorem]{Corollary}
\theoremstyle{definition}
\newtheorem{example}[theorem]{Example}
\theoremstyle{remark}
\newtheorem{remark}[theorem]{Remark}
\numberwithin{equation}{section}
\newcounter{ithmcount}
\newenvironment{iprf}{\begin{list}{{\rm
	(\alph{ithmcount})}}{\usecounter{ithmcount}\labelwidth-5pt
      \leftmargin0pt \topsep3pt \itemsep1pt \parsep2pt}}{\qedhere\end{list}}
\newenvironment{ithm}{
    \begin{list}{{\rm (\alph{ithmcount})}}{\usecounter{ithmcount}\labelwidth18pt
      \leftmargin18pt \topsep3pt \itemsep1pt \parsep2pt}}{\end{list}}
\newenvironment{items}{
\begin{list}{$\alph{item})$}
{\labelwidth25pt \leftmargin30pt \topsep3pt \itemsep5pt \parsep0pt}}
{\end{list}}
\newcommand{\Bstrut}{\rule[0pt]{0pt}{0.9\normalbaselineskip}}
\newcommand{\ssc}{{\text{\rm sc}}}
\newcommand{\ad}{{\text{\rm ad}}}
\newcommand{\oFell}{\overline{\mathbb{F}}_{\ell}}
\newcommand{\mcolon}{\ensuremath{\mathpunct{:}}}
\newcommand{\A}{{\rm A}}
\newcommand{\B}{{\rm B}}
\newcommand{\C}{{\rm C}}
\newcommand{\D}{{\rm D}}
\newcommand{\E}{{\rm E}}
\newcommand{\AB}{{\rm AB}}
\newcommand{\BC}{{\rm BC}}
\newcommand{\classH}{{\rm H}}
\newcommand{\Z}{\mathbb{Z}}
\newcommand{\cX}{\mathcal{X}} 
\DeclareMathOperator{\Hom}{Hom}
\DeclareMathOperator{\Aut}{Aut}
\DeclareMathOperator{\Out}{Out}
\DeclareMathOperator{\Ort}{O}
\DeclareMathOperator{\diag}{diag}  
\DeclareMathOperator{\Dist}{Dist}
\DeclareMathOperator{\Rank}{Rank}
\DeclareMathOperator{\defe}{defe}
\DeclareMathOperator{\Lie}{Lie}
\DeclareMathOperator{\Dih}{Dih}
\DeclareMathOperator{\Spin}{Spin} 
\DeclareMathOperator{\HSpin}{HSpin}
\DeclareMathOperator{\GL}{GL}
\DeclareMathOperator{\PGL}{PGL}
\DeclareMathOperator{\SL}{SL}
\DeclareMathOperator{\PSL}{PSL}
\DeclareMathOperator{\SO}{SO}
\DeclareMathOperator{\PSO}{PSO}
\DeclareMathOperator{\POmega}{P\Omega}
\DeclareMathOperator{\SU}{SU}
\DeclareMathOperator{\Sp}{Sp}
\DeclareMathOperator{\PSp}{PSp}
\DeclareMathOperator{\Sym}{Sym}
\DeclareMathOperator{\Alt}{Alt}
\begin{document}

\title[Elementary abelian subgroups]{Elementary abelian subgroups: from algebraic groups to finite groups}


\author[J.~An]{Jianbei An}
\address{J.~An: Department of Mathematics, University of Auckland, Auckland, New Zealand}
\email{j.an@auckland.ac.nz}

\author[H.~Dietrich]{Heiko Dietrich}
\address{H.~Dietrich: School of Mathematics, Monash University,  Australia}
\email{heiko.dietrich@monash.edu}

\author[A.~J.~Litterick]{Alastair Litterick}
\address{A.~J.~Litterick: School of Mathematics, Statistics and Actuarial Science, University of Essex, Wivenhoe Park, Colchester, Essex CO4 3SQ, England}
\email{a.litterick@essex.ac.uk}

\thanks{The first author was supported by
  the Marsden Fund (of New Zealand), via award numbers UOA 1626 and UOA 2030. The second author thanks Jianbei An and Eamonn O'Brien for the hospitality during various invited research visits to the University of Auckland. The third author acknowledges support from a Humboldt Fellowship for Postdoctoral Researchers from the Alexander von Humboldt Foundation, Germany. The second and third author would like to thank the Isaac Newton Institute for Mathematical Sciences for support and hospitality during the programme ``Groups, representations and applications: new perspectives'' when work on this paper was undertaken. This work was supported by EPSRC Grant Number EP/R014604/1. The authors thank Meizheng Fu for a careful reading of the paper.}

\subjclass[2020]{Primary 20G07; Secondary 20G41, 20-08, 20D06, 22E40}

\date{}

\dedicatory{}

\begin{abstract}
We describe a new approach for classifying conjugacy classes of elementary abelian subgroups in simple algebraic groups over an algebraically closed field, and understanding the normaliser and centraliser structure of these. For toral subgroups, we give an effective classification algorithm. For non-toral elementary abelian subgroups, we focus on algebraic groups of exceptional type with a view to future applications, and in this case we provide tables explicitly describing the subgroups and their local structure. We then describe how to transfer results to the corresponding finite groups of Lie type using the Lang-Steinberg Theorem; this will be used in forthcoming work to complete the classification of elementary abelian $p$-subgroups for torsion primes~$p$ in finite groups of exceptional Lie type. Such classification results are important for determining the maximal $p$-local subgroups and $p$-radical subgroups, both of which play a crucial role in modular representation theory. 
\end{abstract}

\maketitle


\section{Introduction}

\noindent Many open conjectures in representation theory, like the McKay, Dade, and Alperin Weight conjectures, have reductions to finite quasi-simple groups. For example, Navarro \& Tiep \cite{nt} have shown that the Alperin Weight Conjecture is true for every finite group if every finite simple group satisfies a stronger condition, namely, \emph{AWC goodness}. (For another example, see the recent work of Feng, Li \& Zhang \cite{zhang22}.) Verifying this latter property requires a detailed study of the finite simple groups and their covering groups. An important role in this study is played by so-called \emph{$p$-radical} subgroups and their $p$-local structure---that is, subgroups $R\leq G$ which satisfy $R=O_p(N_G(R))$, the largest normal $p$-subgroup of the normaliser $N_G(R)$---together with their normalisers $N_G(R)$ and centralisers $C_G(R)$. The recent articles of Malle~\&~Kessar, e.g.~\cite{lcc2}, give an excellent survey of the current state of these conjectures and the role of $p$-local structure. In fact, radical subgroups are relevant in many areas of modular representation theory: for instance, defect groups of blocks are radical, the subgroup $R$ of a weight $(R, \varphi)$ is radical, and the first nontrivial subgroup in any radical chain is radical. If the radical subgroups of $G$ are known, then the essential rank of the Frobenius category $\mathcal{F}_D(G)$ ($D\leq G$ a Sylow subgroup) can be determined, cf.\ \cite{AnD2}. Radical subgroups are also used in the study of so-called $p$-local geometries, cf.\ \cite{ky}. The classification of radical subgroups therefore is an important open problem; classifications are known for the symmetric, classical, and sporadic groups, as well as for some exceptional groups of Lie type, see \cite{ADHE6} and the references therein. 

One approach to classify $p$-radical subgroups of a finite group $G$ is via its $p$-local subgroups: A subgroup $M\leq G$ is \emph{$p$-local} if it is the normaliser of a
nontrivial $p$-subgroup of $G$. It is \emph{maximal $p$-local} if $M$
is maximal with respect to inclusion among all $p$-local subgroups of $G$. It is
\emph{local maximal} if it is $p$-local for some prime $p$ and maximal
among all subgroups of $G$. If $G$ has a nontrivial normal $p$-subgroup, then the only maximal $p$-local subgroup is $G$ itself. Following the notation of previous works, we say  $M<G$ is \emph{maximal-proper $p$-local} if $M$ is $p$-local and maximal with respect to inclusion among all proper subgroups of $G$ that are $p$-local. Thus if $O_p(G)=1$ then the maximal-proper $p$-local subgroups are exactly the maximal $p$-local subgroups. Now, if $R\leq G$ is $p$-radical with $O_p(G)\neq R$, then $N_G(R)$ is $p$-local and
$N_G(R)\leq N_G(C)$ for every characteristic subgroup $C\leq R$. In
particular, $N_G(R)$ is contained in some maximal-proper $p$-local
$M\leq G$, so that $N_G(R)=N_M(R)$ and $R$ is $p$-radical in $M$. This shows that every radical $p$-subgroup $R$ of $G$ with $O_p(G)\neq R$ is radical
in some maximal-proper $p$-local subgroup $M=N_G(R)$ of $G$. Since $N_G(R)\leq N_G(\Omega_1(Z(R)))$, one can show that  every maximal-proper $p$-local subgroup can be realised as the normaliser of an elementary abelian $p$-subgroup. This approach has been used successfully in recent work \cite{ADF4, ADE7, ADH2E6maximal, ADHE6,AHF4, AH15}, leading to various classifications of $p$-local and $p$-radical subgroups in finite exceptional groups of Lie type. Similar to $p$-radical groups, $p$-local groups also play an important role in group theory. For example, large parts of the classification of the finite simple groups are based on the analysis of $p$-local subgroups; one reason for this is that the fusion of $p$-elements is controlled by normalisers of $p$-subgroups, by the Alperin-(Goldschmidt) Fusion Theorem. We note that Cohen et al.\ \cite{CLSS} classified local maximal subgroups of exceptional
groups of Lie type. However, not every maximal-proper $p$-local subgroup is
local maximal, and the details obtained in the classification 
of maximal-proper $p$-local subgroups have proved useful for the classification of radical subgroups.

All of this highlights the importance of knowing the elementary abelian $p$-subgroups of a finite group of Lie type; Griess \cite[Section 1]{Griess} lists more references supporting this. With the knowledge of the elementary abelian subgroups, one can attempt a classification of the maximal-proper $p$-local and $p$-radical subgroups. Recent efforts in this direction have focused directly on the finite groups involved (see \cite{ADF4,  ADE7, ADHE6,AHF4, AH15} for groups of type $F_4$, $E_6$, $E_7$). In contrast, this paper takes an alternative approach beginning with an ambient algebraic group over an algebraically closed field. In this case, the rich geometric and Lie-theoretic structure of the group drastically simplifies many arguments, and indeed there are a number of existing results in this case \cite{Andersen, Griess,YuKleinFour,Yu} which we will build upon here. Supposing one has complete subgroup structure information for a given algebraic group in positive characteristic, the Lang-Steinberg Theorem (cf.\ \cite[\S 21.2]{mt}) then gives a powerful technique for transferring results to the corresponding finite groups of Lie type. This both streamlines arguments, and avoids some of the lengthy ad-hoc calculations required in previous papers.

We note that the results of this paper have already been used successfully in \cite{ADH2E6maximal,ADHE6} to classify the maximal $3$-local and $3$-radical subgroups in the finite groups of type $E_6$. It is also used in \cite{AnEaton} for work on Donovan's conjecture for blocks with extra-special defect groups $p_+^{1+2}$, in \cite{AnNew} to classify weight subgroups of quasi-isolated $2$-blocks of $F_4(q)$, and in the (ongoing) PhD thesis of Fu to classify elementary abelian subgroups in finite classical groups. In forthcoming work, we  apply the results of this paper to classify the elementary abelian $p$-subgroups in the finite exceptional groups of Lie type for small (torsion) primes. In particular, our results are also a significant step toward a classification of the maximal-proper $p$-local subgroups in the finite groups of Lie type; this research will be published in a separate paper.

\subsection{Main results and structure of the paper}
\noindent Let $p$ be a prime and let $G$ be a simple algebraic group over an algebraically closed field of characteristic $\ell \ge 0$. If $\ell > 0$, then we choose a Steinberg endomorphism $F$ of $G$, with corresponding fixed-point subgroup $G^{F}$, a finite group of Lie type. If $\ell=p$, then the $p$-radical subgroups of $G^F$ are known by the Borel--Tits Theorem \cite[Corollary 3.1.5]{Gor}, hence we assume throughout that $\ell\neq p$. Thus each elementary abelian $p$-subgroup of $G$ consists of semisimple elements, and therefore normalises a conjugate of a fixed maximal torus $T\leq  G$. Our strategy is to consider separately the case of \emph{toral} subgroups (those with a conjugate contained in $T$) and \emph{non-toral} subgroups (those which are not toral). While we consider the toral elementary subgroups for all simple algebraic groups, we restrict ourselves to exceptional groups for the non-toral ones; classification of non-toral elementary abelian groups in the classical case is more difficult (and is sometimes related to the classification of certain codes, see \cite[Table~1]{Griess}). Our main results are the following:
\begin{ithm}
\item[(1)] A constructive method to classify the toral elementary abelian $p$-subgroups (up to conjugacy) in a simple algebraic group $G$ and to determine information on their local structure, see Section~\ref{secAlgGrp}. Our method translates to a practical algorithm that we have implemented for the computer algebra system Magma \cite{magma}; our implementation is available under the link provided in  \cite{ourcode}.
\item[(2)] A complete classification of the non-toral elementary abelian $p$-subgroups (up to conjugacy) in an exceptional simple algebraic group $G$ and their local structure, see Proposition \ref{prop:algsub} for odd $p$ and Proposition \ref{prop:algsub2} for $p = 2$.
\item[(3)] A description of how (1) and (2) can be used to classify the elementary abelian $p$-subgroups (up to conjugacy) and their local structure in the finite groups $G^F$, see Section~\ref{secFinGrp}.
\end{ithm}

\begin{remark}
In fact, our method for classifying subgroups in (1) and (2) works for any finite subgroup of order coprime to the characteristic $\ell$, and if the subgroups are moreover abelian, then the arguments relating to local structure also generalise. However, our results on torality depend implicitly on the fact that the subgroup in question is an abelian $p$-group; for this reason and the reasons above, we keep our focus on the elementary abelian case.
\end{remark}

First consider toral subgroups. For a maximal torus $T$ of $G$, it is well known that the Weyl group $N_G(T)/T$ controls conjugacy and determines the normaliser structure of subgroups of $T$. By viewing $G$ as the group of points of a suitable group scheme, it follows that the classification of toral elementary abelian $p$-subgroups of $G$ is independent of the characteristic $\ell$ as long as $\ell\ne p$, which we assume throughout. Importantly, this shows that all of these calculations can be performed in a suitable finite group of Lie type, which gives rise to a practical computational approach toward a classification. We give full details in Section~\ref{secAlgGrp}.
\enlargethispage{2ex}

Turning to non-toral elementary abelian subgroups of an exceptional simple algebraic group in Section~\ref{secNontoralAlg}, the number of classes of such subgroups is bounded by an absolute constant, and in many cases these have been described elsewhere in the literature: For example, the maximal non-toral elementary abelian subgroups in complex groups are described by Griess \cite{Griess}. Complete information on non-toral subgroups for $p \neq 2$ in groups over $\mathbb{C}$ is given by Andersen et al.\ \cite{Andersen}. For $p = 2$, much information is provided by Yu \cite{Yu} for adjoint compact groups. Again, all these results carry over into any characteristic different from $p$. We present explicit tables (including new information on the local structure) and we use the opportunity to correct some typographic errors in the ancillary data of \cite{Yu}. 

Assuming $\ell>0$, and using knowledge of the elementary abelian subgroups and their local structure in $G$, in Section \ref{secFinGrp} we use a consequence of the Lang-Steinberg Theorem to see how the $G$-classes of subgroups split into subgroups of the finite groups $G^{F}$. If some $G$-conjugate of an elementary abelian subgroup $E$ lies in $G^{F}$, then the $G^{F}$-subgroup classes arising from $E$ correspond to $F$-classes in $N_{G}(E)/C_{G}(E)^{\circ}$ of elements contained in $C_{G}(E)/C_{G}(E)^{\circ}$, and these are known by our previous calculations. The normaliser and centraliser structure also follow in short order.

In Appendices \ref{appYu} and \ref{appYuE8} we give various ancillary data which we computed (or re-computed) in the course of proving our results for non-toral subgroups, in particular, for $p=2$ and  types $E_7$ and~$E_8$. 
 
\section{Notation} \label{sec:notation}
\noindent Throughout, unless stated otherwise, $G$ is a simple algebraic group, defined over an algebraically closed field $K$ of characteristic $\ell \ge 0$. When discussing elementary abelian $p$-groups, we always assume $p \neq \ell$, restating this assumption when necessary. We treat $G$ as a Zariski-closed subgroup of some general linear group over the algebraically closed field, except in Section~\ref{sec:change_char} where we briefly mention the scheme-theoretic background necessary to transfer results between groups over fields of different characteristics. We add subscripts ``\ssc'' and ``\ad'' to denote the simply-connected and adjoint group, respectively, of a given type.

For $q$ a prime power, we denote by $\mathbb{F}_q$ the finite field with $q$ elements. We denote by $T$ a fixed maximal torus of $G$, and define the Weyl group of $G$ as $W= N_{G}(T)/T$; this does not depend on the choice of $T$ since all maximal tori in a linear algebraic group are conjugate \cite[Corollary 6.5]{mt}. Recall that $G$ has an associated root datum, consisting of a root system $\Phi$, dual root system $\Phi^{\vee}$, character lattice $X = \Hom(T,K^{\ast}) \supseteq \mathbb{Z}\Phi$ and co-character lattice $Y = \Hom(K^{\ast},T) \supseteq \mathbb{Z}\Phi^{\vee}$, with a natural pairing $\left<-,-\right> \colon X \times Y \to \mathbb{Z}$. There are natural notions of homomorphism and isomorphism of root data, and $G$ is determined up to isomorphism by its root datum and the field $K$, see \cite[II.1.13]{Jantzen}.

We follow the convention of Malle and Testermann \cite[Section 21.2]{mt} and call an endomorphism $F\colon G\to G$ of a linear algebraic group $G$  a \emph{Steinberg morphism} if some power $F^m$ is a \emph{Frobenius morphism} with respect to some $\mathbb{F}_q$-structure, that is, $F^m$ is induced by the $q$-power map $K\to K$. A Steinberg morphism is an automorphism of abstract groups, but not necessarily of algebraic groups. As usual, if $X$ is a set and $F$ is a function $X \to X$, then we denote by $X^{F}$ the fixed-point set of $F$. If $F$ is a Steinberg morphism of a linear algebraic group with $F^m$ a Frobenius morphism, then $G^F\leq G^{F^m}$ are both finite groups. We note that this implies $F$ is a Steinberg morphism as defined in Gorenstein et al.\ \cite[Definition~1.15.1]{Gor}. Lastly, we mention a celebrated result of Steinberg which shows that if $G$ is simple (the case of most interest here), then every non-trivial endomorphism of $G$ is either an automorphism of algebraic groups or a Steinberg morphism, and the latter occurs if and only if $G^F$ is finite, see \cite[Theorem 21.5]{mt}. 

For an algebraic group $H$, we denote by $H^{\circ}$ the connected component of the identity element. If $X$ is a finite group then $O_{p}(X)$ denotes the largest normal $p$-subgroup of $X$, and if $P$ is a $p$-group, then $\Omega_{k}(P)$, with $k\geq 1$ an integer, denotes the subgroup generated by elements of order dividing $p^k$.

Let $A,B$ be groups and $l,n,m$ be positive integers. Let $p$ be a prime. We 
denote by $A\times B$ the direct product of $A$ and $B$. We sometimes write $m$ for the cyclic group of size $m$, and we denote by $A^n$ 
the direct product of $n$ copies of $A$. This leads to an ambiguity if $m = p^n$ is a prime power; we avoid this by writing $p^n$ only for the direct product of groups of order $p$, and $m$ for the cyclic group of order $m$. An extension, split extension, and 
central extension of $A$ by $B$ are respectively denoted $A.B$, $A{:} B$, and $A\circ B$; here $A$ is the normal subgroup. We read $A.B.C$ as $(A.B).C$, and similarly  for other products of groups. For group elements $g,h\in A$ we denote conjugation as usual by ${}^gh=ghg^{-1}$ and $h^g=g^{-1}hg$. The notation $p^{1+2n}$ denotes an extraspecial group; there 
are two isomorphism types of these, which are denoted by $p_+^{1+2n}$ and $p_-^{1+2n}$.  
The alternating and symmetric group on $n$ points are denoted by $\Alt_n$ and 
$\Sym_n$, respectively. The dihedral and generalised quaternion group of order $n$ are ${\rm Dih}_{n}$ and $Q_{n}$. We write $\GL_n(q)$ for the 
general linear group of degree $n$ over the field with $q$ elements, and 
$\SL_n(q)$, $\SU_n(q)$, $\Sp_n(q)$, and $\Ort^\pm _n(q)$ for the  classical 
groups (special linear, unitary, symplectic, and orthogonal group). Other notation is introduced at the appropriate places.

\section{Preliminaries}\label{secPrelim}
 
\subsection{Independence from the characteristic} \label{sec:change_char} The following result is of vital importance for establishing  our main results. In the present form this is due to Larsen \cite[Appendix~A]{GrRy}, but see also \cite[p.~137]{Andersen}, \cite[Section 3]{CoWaSurvey}, \cite[Appendix~A]{Griess}, and \cite{FM}. It tells us that the classification of elementary abelian $p$-subgroups of $G$ is independent of the defining characteristic $\ell$, as long as it is different from $p$; in particular, it shows that the classification of elementary abelian $p$-subgroups of a semisimple algebraic group $G(K)$ coincides with that of the corresponding complex Lie group $G(\mathbb{C})$. The proof makes use of deep algebraic geometry, interpreting $G$ as a group scheme over a ring of Witt vectors, which is an integral domain of characteristic $0$ that maps onto an algebraically closed field of positive characteristic.

\begin{proposition}[\hspace{1sp}{\cite[Theorem 1.22]{GrRy}}] \label{prop:change_char}
Let $\ell$ be a prime number and $G$ a semisimple split group scheme;  let $X$ be a finite group of order coprime to $\ell$. For any algebraically closed fields $K_0$ and $K_\ell$ of characteristic $0$ and $\ell$, respectively, the sets $\Hom(X,G(K_{0}))/G(K_{0})$ and $\Hom(X,G(K_{\ell}))/G(K_{\ell})$ have the same cardinality, where the quotients are taken with respect to the natural conjugation action of the groups on the homomorphism sets. Furthermore, the process of reduction modulo $\ell$ induces a bijection.
\end{proposition}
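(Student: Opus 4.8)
The plan is to exhibit the set of conjugacy classes of homomorphisms, simultaneously for all characteristics coprime to $n:=|X|$, as the geometric fibres of a single finite étale scheme over $\mathbb{Z}[1/n]$; the Proposition then drops out, since a finite étale morphism has geometric fibres of constant cardinality and is split over a strictly henselian local ring. Fix the given split model of $G$ over $\mathbb{Z}$ (a Chevalley group scheme) and base-change it to $\mathbb{Z}[1/n]$; let $W=W(\overline{\mathbb{F}}_\ell)$ be the ring of Witt vectors, a strictly henselian DVR with residue field $\overline{\mathbb{F}}_\ell$ and fraction field $L$ of characteristic $0$, so that ``reduction mod $\ell$'' means specialisation along $\operatorname{Spec} W$.

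First I would study the representation scheme $\mathbf{R}:=\underline{\Hom}(X,G)$ over $\mathbb{Z}[1/n]$, realised as a closed subscheme of a product of copies of $G$ indexed by generators of $X$ and cut out by the relations of $X$. By the infinitesimal lifting criterion, for a square-zero extension $S'\twoheadrightarrow S$ with kernel $I$ and a point $\rho\in\mathbf{R}(S)$, the obstruction to lifting $\rho$ lies in $H^{2}(X,\mathfrak{g}\otimes_{S}I)$ and, when non-empty, the set of lifts is a torsor under $H^{1}(X,\mathfrak{g}\otimes_{S}I)$ (group cohomology with $X$ acting through $\operatorname{Ad}\circ\rho$); both vanish because $n$ is invertible in $S$. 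Hence $\mathbf{R}$ is smooth over $\mathbb{Z}[1/n]$. The same averaging argument shows each centraliser scheme $C_{G}(\rho)$ (the $X$-fixed points for the $\operatorname{Ad}\rho$-conjugation action) is smooth over the base, with $\operatorname{Lie} C_{G}(\rho)=\mathfrak{g}^{X}$ a direct summand of $\mathfrak{g}$ of locally constant rank.

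Next I would pass to the affine GIT quotient $\mathbf{R}/\!\!/G=\operatorname{Spec}\mathcal{O}(\mathbf{R})^{G}$, which exists over $\mathbb{Z}[1/n]$ since $G$ is reductive. The crucial structural input is that every $G$-orbit on $\mathbf{R}$ is \emph{closed}: a homomorphism from a finite group of order prime to the characteristic is $G$-completely reducible, and $G$-cr homomorphisms are exactly those with closed orbit. Consequently $\mathbf{R}\to\mathbf{R}/\!\!/G$ is a geometric quotient whose geometric fibres are the orbit sets, so $(\mathbf{R}/\!\!/G)(k)=\Hom(X,G(k))/G(k)$ for every algebraically closed $k$ with $\operatorname{char} k\nmid n$. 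Each orbit is $G/C_{G}(\rho)$ with $C_{G}(\rho)$ smooth, so $\mathbf{R}/\!\!/G$ is quasi-finite over the base with geometrically reduced fibres (one reduced point per orbit, as $\mathcal{O}(G/C_{G}(\rho))^{G}=k$), and $\mathcal{O}(\mathbf{R})^{G}\subseteq\mathcal{O}(\mathbf{R})$ is torsion-free, hence projective, over the Dedekind ring $\mathbb{Z}[1/n]$. Therefore, once we know $\mathbf{R}/\!\!/G$ is \emph{finite} over $\mathbb{Z}[1/n]$, it is automatically finite étale, and the Proposition follows from the first paragraph, the explicit bijection being specialisation along $\operatorname{Spec} W$ (over which a finite étale scheme is split).

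The remaining — and, I expect, main — obstacle is the properness, equivalently the finiteness, of $\mathbf{R}/\!\!/G\to\operatorname{Spec}\mathbb{Z}[1/n]$. I would check the valuative criterion: for a DVR $V$ with fraction field $M$ over $\mathbb{Z}[1/n]$, an $M$-point of $\mathbf{R}/\!\!/G$ is the class of some $\rho\colon X\to G(\overline{M})$, and extending it over $V$ amounts, after a finite extension, to conjugating $\rho$ into the integral points $G(\overline{V})$. A finite subgroup of $G(\overline{M})$ is bounded and, its order being prime to the residual characteristic, is conjugate into $G(\overline{V})$; this is a tame Bruhat–Tits fixed-point statement — the ``potentially good reduction'' of a finite $\ell'$-subgroup — and it is exactly what forbids orbits from splitting or disappearing under reduction. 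Without it, smoothness of $\mathbf{R}$ alone only shows $\mathbf{R}/\!\!/G$ is a dense open in a finite $\mathbb{Z}[1/n]$-scheme, which at the level of classes yields a natural surjection $\Hom(X,G(\overline{\mathbb{F}}_\ell))/G(\overline{\mathbb{F}}_\ell)\twoheadrightarrow\Hom(X,G(L))/G(L)$ — send a mod-$\ell$ class to the generic fibre of a Witt-vector lift, using that each connected component of $\mathbf{R}_{W}$ is irreducible with a single orbit as generic fibre — but not yet injectivity, which the Bruhat–Tits input supplies.
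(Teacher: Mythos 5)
First, a point of reference: the paper does not prove this proposition at all --- it is quoted verbatim from Larsen's appendix to Griess--Ryba, and the surrounding text only gives a two-sentence gloss (interpret $G$ as a group scheme over a ring of Witt vectors; show every finite subgroup of $G(K_0)$ is conjugate into $G(R)$ and reduce). Your outline reconstructs exactly that architecture, and correctly: smoothness of the representation scheme over $\mathbb{Z}[1/|X|]$ via vanishing of $H^1$ and $H^2(X,\mathfrak{g}\otimes I)$, closedness of orbits via linear reductivity $\Rightarrow$ $G$-complete reducibility $\Rightarrow$ closed orbit (the same chain of citations to Bate et al.\ that the paper uses in Lemma 3.3), so that the GIT quotient has geometric fibres equal to the conjugacy-class sets, and then finite \'etale $\Rightarrow$ constant fibre cardinality and splitness over the strictly henselian $W(\oFell)$, which is the reduction-mod-$\ell$ bijection. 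You have also correctly isolated where the real work lies.

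The one genuine gap is the step you name but do not prove: that a homomorphism $X \to G(\overline{M})$ with $|X|$ prime to the residue characteristic can be conjugated into $G(\overline{V})$. This is not a routine corollary of the Bruhat--Tits fixed-point theorem: the fixed point of $X$ in the building only places $X$ inside \emph{some} parahoric subgroup, and for a general split group not every (maximal) parahoric is conjugate to $G(\mathcal{O})$, nor does every fixed point become hyperspecial after a tame extension. Producing a \emph{reductive} integral model containing $X$ --- equivalently, showing the fixed-point locus meets a hyperspecial point, or arguing via lattices in a faithful representation together with $G$-complete reducibility --- is precisely the content of Larsen's key lemma, and it is where tameness enters irreplaceably. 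Relatedly, your fallback claim in the last paragraph (that smoothness alone yields a surjection from mod-$\ell$ classes onto characteristic-$0$ classes) is not quite free either: a connected component of $\mathbf{R}_{W}$ that is smooth over $W$ need not have nonempty special fibre, so without the integrality input you only get a well-defined map on the components that do, not surjectivity onto all generic orbits. In summary: same route as the cited source, correct skeleton, but the theorem's hard kernel is asserted rather than established.
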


The `process of reduction modulo $\ell$' here entails showing that every finite subgroup of $G(K_{0})$ has a conjugate contained in $G(R)$, where $R$ is a suitable ring of Witt vectors that maps onto an algebraically closed subfield of $K_{\ell}$, and then considering the induced map $G(R) \to G(K_{\ell})$; we refer to \cite[Appendix~A]{GrRy} and \cite{FM} for more details.

Recall, for instance from \cite[Proposition 8.4]{Andersen}, that a complex algebraic group $G=G(\mathbb{C})$ has a maximal compact subgroup in the Hausdorff topology, say $H$, which is unique up to conjugacy and \emph{strongly controls fusion}: if  $U^{g} = V$ for subsets $U,V\subseteq H$ and $g\in G$, then there is some $h \in H$ such that $u^{g} = u^{h}$ for all $u \in U$. In particular, if $U^g=U$, then $g\in C_G(U)N_H(U)$, so $N_G(U)=C_G(U)N_H(U)$. 
Moreover, if $U\leq G$ is a finite subgroup, then $N_G(U)/C_G(U)$ embeds into the (finite) automorphism group $\Aut(U)$, and since $C_G(U)^\circ$ has finite index in $C_G(U)$, it has finite index in $N_G(U)$; thus, we have $N_G(U)^\circ=C_G(U)^\circ$, and therefore $N_G(U)/N_G(U)^\circ =N_H(U)C_G(U)/C_G(U)^\circ$. Based on all this, the next lemma shows that we can view the $p$-local structure of $G$ inside $H$. 

\begin{lemma} \label{lem:compact}
Let $G$ be a reductive complex algebraic group with a maximal compact subgroup $H$, and let $E \le H$ be elementary abelian. Then the following hold:
\begin{ithm}
	\item The inclusion $H \hookrightarrow G$ induces a bijection on conjugacy classes of elementary abelian subgroups of $G$.
	\item The group $C_G(E)^\circ \cap H$ is a maximal compact subgroup of $C_G(E)^\circ$, with the same root datum.
	\item We have
 \begin{align*}
     N_H(E) &= (H \cap N_G(E)^\circ) . (N_G(E) / N_G(E)^\circ)\text{, and} \\ 
     C_H(E) &= (H \cap C_G(E)^\circ) . (C_G(E) / C_G(E)^\circ).
 \end{align*}
\end{ithm}
\end{lemma}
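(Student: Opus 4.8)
The plan is to establish each of the three parts in turn, using the facts recorded in the paragraph immediately preceding the lemma — namely, that a maximal compact subgroup $H$ of a reductive complex algebraic group $G$ is unique up to conjugacy, strongly controls fusion, and satisfies $N_G(U) = C_G(U)N_H(U)$ and $N_G(U)^\circ = C_G(U)^\circ$ for any finite $U \le G$.

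\emph{Part (a).} First I would show the induced map on conjugacy classes is well-defined and injective: if $E_1, E_2 \le H$ are elementary abelian and $E_1^g = E_2$ for some $g \in G$, then strong control of fusion gives $h \in H$ with $e^g = e^h$ for all $e \in E_1$, so $E_1^h = E_2$; thus $H$-conjugacy and $G$-conjugacy coincide on subgroups of $H$. For surjectivity, let $E \le G$ be any elementary abelian subgroup; since $E$ is finite it is contained in \emph{some} maximal compact subgroup of $G$ (e.g.\ a maximal compact subgroup of $N_G(E)$, or more simply because finite subgroups are conjugate into $H$ — this is the standard fact that every compact subgroup of $G$ is conjugate into the maximal compact $H$), hence a $G$-conjugate of $E$ lies in $H$. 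This proves (a).

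\emph{Part (b).} Here the key point is that $C := C_G(E)^\circ$ is itself a connected reductive complex algebraic group (centralisers of semisimple — indeed of finite — subgroups are reductive, and we take the identity component). Apply part of the same preliminary discussion to $C$: it has a maximal compact subgroup, and I want to identify it with $C \cap H$. Since $E$ centralises $H \cap C$ pointwise, and $H \cap C$ is a compact subgroup of $C$, it suffices to show $H \cap C$ is \emph{maximal} compact in $C$; this follows because any compact subgroup $K$ of $C$ with $H \cap C \le K$ is a compact subgroup of $G$ commuting with $E$, and a maximality/conjugacy argument (using that $E$ together with a maximal compact of $C_G(E)^\circ$ generates a compact group, which must be conjugate into $H$, combined with a rank count) forces $K = H \cap C$. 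The statement about root data then follows because a maximal compact subgroup of a connected reductive complex group is a compact real form with the same root datum (same rank torus, same root system), as recorded for instance in \cite[Proposition 8.4]{Andersen}.

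\emph{Part (c).} For the normaliser identity, recall from the preceding paragraph that $N_G(E) = N_H(E)C_G(E)$ and $N_G(E)^\circ = C_G(E)^\circ$, so $N_G(E)/N_G(E)^\circ = N_H(E)C_G(E)/C_G(E)^\circ \cong N_H(E)/(N_H(E)\cap C_G(E)^\circ) = N_H(E)/(N_H(E)\cap H\cap C_G(E)^\circ)$. Reading this as a short exact sequence $1 \to H\cap N_G(E)^\circ \to N_H(E) \to N_G(E)/N_G(E)^\circ \to 1$ — where I must check $H \cap N_G(E)^\circ = H\cap C_G(E)^\circ$ is exactly the kernel of $N_H(E) \to N_G(E)/N_G(E)^\circ$ — gives the displayed extension; the $C_H(E)$ identity is the same argument with $N$ replaced by $C$ throughout (and is in fact simpler, since $C_G(E)/C_G(E)^\circ$ is directly the component group of $C_G(E)$ and $C_H(E)$ surjects onto it because $H$ controls fusion). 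The main obstacle I anticipate is part (b): verifying that $C \cap H$ is maximal compact in $C$ — rather than merely a compact subgroup — requires an honest argument about conjugacy of compact subgroups and a dimension/rank comparison, whereas parts (a) and (c) are essentially bookkeeping on top of the strong-control-of-fusion property already stated.
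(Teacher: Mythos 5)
Part (a) is fine and is essentially the paper's argument. The problems are in parts (b) and (c), with (c) containing the more serious gap.

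In part (c), your chain $N_G(E)/N_G(E)^\circ = N_H(E)C_G(E)/C_G(E)^\circ \cong N_H(E)/(N_H(E)\cap C_G(E)^\circ)$ silently assumes $N_H(E)C_G(E) = N_H(E)C_G(E)^\circ$, which is equivalent to $C_G(E) = C_H(E)\,C_G(E)^\circ$ --- and that is precisely the non-trivial content of the second identity in (c), namely that $C_H(E)$ surjects onto the component group $C_G(E)/C_G(E)^\circ$. You then justify this surjection ``because $H$ controls fusion'', but strong control of fusion applied to $U=V=E$ and $g \in C_G(E)$ only produces \emph{some} $h \in H$ with $e^h = e^g = e$ for all $e \in E$, i.e.\ some element of $C_H(E)$; it gives no control over which coset of $C_G(E)^\circ$ that element (or $gh^{-1}$) lies in. So the key surjectivity is unproved, and the normaliser identity rests circularly on it. The paper closes this gap with a compactness argument: since $C_G(E)$ has finitely many components, one can assemble a compact subgroup of $C_G(E)$ (a finite extension of the maximal compact subgroup $H \cap C_G(E)^\circ$ of $C_G(E)^\circ$) meeting every coset of $C_G(E)^\circ$; this compact group has a conjugate inside $H$, and strong control of fusion lets one take the conjugating element in $C_G(E)$, so that it normalises $C_G(E)^\circ$ and permutes the cosets, whence $C_H(E)$ meets every coset. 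Some argument of this kind is indispensable.

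In part (b) your strategy is salvageable but the sketch does not close as written: the ``rank count'' is not the right tool. What actually finishes it is, again, strong control of fusion. Let $M$ be a maximal compact subgroup of $C := C_G(E)^\circ$; then $EM$ is compact, so $(EM)^g \le H$ for some $g \in G$; applying fusion control to $E$ and $E^g$ gives $h \in H$ with $gh^{-1} \in C_G(E)$, so $gh^{-1}$ normalises $C$ and carries $M$ to a maximal compact subgroup of $C$ contained in $H \cap C$; since $H \cap C$ is compact, this forces $M^{gh^{-1}} = H\cap C$. The paper avoids this entirely by citing \cite[Proposition 8.4(3)]{Andersen} for the maximal compactness of $H \cap C_G(E)$ in $C_G(E)$ and then passing to identity components; if you do not want to reconstruct the argument, that citation is the cleanest route.
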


\begin{proof}
  By uniqueness of $H$, each compact subgroup of $G$ has a $G$-conjugate contained in $H$. Uniqueness of this $G$-conjugate up to $H$-conjugacy follows since $H$ strongly controls fusion in $G$; this proves (a). Next, by \cite[Proposition~8.4(3)]{Andersen} the group $C_{H}(E) = H \cap C_{G}(E)$ is a maximal compact subgroup of $C_{G}(E)$; this is reductive, and the inclusion $C_{H}(E) \to C_{G}(E)$ sends a maximal torus of $C_{H}(E)$ into one of $C_{G}(E)^{\circ}$, matching up the corresponding root data, giving (b). For (c), note that each coset of $N_G(E)^\circ$ in $N_G(E)$ contains a translate of $N_G(E)^\circ \cap H$, which is a maximal compact subgroup of $N_G(E)^\circ=C_G(E)^\circ$. The union of these translates is a finite extension of a compact group, hence is compact, and therefore $N_H(E)$ meets each such coset. Identical reasoning applies for the centraliser of $E$. (A more detailed justification for parts (b) and (c) can be obtained using the complexification functor from compact Lie groups to complex reductive groups, as detailed, e.g., in \cite[Theorems 1 and 2]{JonesRumyninThomas}.)
\end{proof}

Thus the classification of elementary abelian subgroups and their local structure in a complex reductive group can be viewed inside compact real Lie groups. We will refer in particular to \cite{Andersen,Griess,Yu}. 

Proposition~\ref{prop:change_char} can already be applied to finite subgroups of $N_{G}(E)$ and $C_{G}(E)$ for an elementary abelian subgroup $E\leq G$, but stronger results are available since $N_{G}(E)$ and $C_{G}(E)$ can themselves be viewed as the points of a group scheme. The following lemma summarises a series of results of Friedlander \& Mislin \cite{FM} which make use of this viewpoint. We note that the results quoted from \cite{FM} require an elementary abelian $p$-subgroup with $p\ne\ell$, but their proofs should also hold more generally.

\begin{lemma} \label{lem:change_char}
Let $K_0$ and $K_\ell$ be algebraically closed fields of characteristic $0$ and $\ell$, respectively, and let $p\ne \ell$ be a prime. Let $E$ and $E'$ respectively be elementary abelian $p$-subgroups of $G(K_{0})$ and $G(K_{\ell})$ which correspond under the bijection of Proposition~\ref{prop:change_char}. Then reduction-mod-$\ell$ allows us to identify the root data of the reductive groups $C_{G(K_{0})}(E)^{\circ}$ and $C_{G(K_{\ell})}(E')^{\circ}$, and also induces isomorphisms
\begin{align*}
N_{G(K_{0})}(E)/C_{G(K_{0})}(E)\phantom{{^{\circ}}} &\cong N_{G(K_{\ell})}(E')/C_{G(K_{\ell})}(E')\phantom{^{\circ}},\\
N_{G(K_{0})}(E)/C_{G(K_{0})}(E)^{\circ} &\cong N_{G(K_{\ell})}(E')/C_{G(K_{\ell})}(E')^{\circ},\\
C_{G(K_{0})}(E)/C_{G(K_{0})}(E)^{\circ} &\cong C_{G(K_{\ell})}(E')/C_{G(K_{\ell})}(E')^{\circ}.
\end{align*}
Moreover, the order of the component group $C_{G(K_{0})}(E)/C_{G(K_{0})}(E)^{\circ}$ is a power of~$p$.\label{comment:derived}
\end{lemma}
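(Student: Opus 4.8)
The plan is to prove Lemma~\ref{lem:change_char} in two parts: first establish the case $\ell = 0$ versus $\ell > 0$ via Friedlander--Mislin, then deduce the $p$-power statement for the component group. Throughout, we work with $E \leq G(K_0)$ and the corresponding $E' \leq G(K_\ell)$ under Proposition~\ref{prop:change_char}, which we realise concretely by the reduction-mod-$\ell$ process: there is a ring $R$ of Witt vectors (a complete DVR of characteristic $0$ with residue field an algebraically closed subfield of $K_\ell$) such that a $G(K_0)$-conjugate of $E$ lies in $G(R)$, and $E'$ is its image under $G(R) \to G(K_\ell)$. The key point is that $N_G(E)$ and $C_G(E)$ are not just abstract groups but are themselves the $K$-points of (affine) group schemes over $R$: the centraliser scheme $\underline{C}_G(E)$ and normaliser scheme $\underline{N}_G(E)$ are defined over $R$ because $E \leq G(R)$, and base change to $K_0$ or $K_\ell$ recovers $C_{G(K_i)}(E)$ and $N_{G(K_i)}(E)$ respectively (smoothness/flatness statements of the relevant schemes over $R$, valid because $p$ is invertible in $R$, are exactly what \cite{FM} supplies). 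First I would quote the relevant results of Friedlander--Mislin directly: they show these centraliser and normaliser group schemes over $R$ are smooth with reductive identity component, and that specialisation induces isomorphisms of component groups and matches root data of the identity components. This gives the three displayed isomorphisms and the identification of root data of $C_G(E)^\circ$ more or less immediately; the only care needed is to check (as the excerpt flags in its parenthetical remark) that although \cite{FM} is phrased for elementary abelian $p$-subgroups, the arguments are really about finite subgroups of order invertible in $R$, so nothing is lost.

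For the final assertion — that $\lvert C_{G(K_0)}(E)/C_{G(K_0)}(E)^\circ\rvert$ is a power of $p$ — the cleanest route is to pass to the compact picture via Lemma~\ref{lem:compact}. Since the classification is independent of $\ell$, it suffices to treat $G = G(\mathbb{C})$. Let $H$ be a maximal compact subgroup of $G$ containing $E$. Then $C := C_G(E)$ is reductive with maximal compact subgroup $C_H(E) = H \cap C$, and by Lemma~\ref{lem:compact}(b),(c) the component group $C/C^\circ$ is isomorphic to $C_H(E)/(C_H(E) \cap C^\circ) = C_H(E)/C_H(E)^\circ$, i.e. the component group of the compact group $C_H(E)$. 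Now $E$ is a finite abelian group of exponent $p$ acting trivially by conjugation on $C$, so $C_H(E)$ is the fixed-point subgroup $H^E$ (restricted appropriately), and the component group of the fixed points of an elementary abelian $p$-group acting on a compact connected Lie group is a $p$-group — this is a standard fact, following for instance from a spectral-sequence or transfer argument in $\mathbb{F}_q$-cohomology for the finite $p$-group $E$ acting on the connected space $H_0$ (one can also argue one generator at a time: for $x \in E$ of order $p$ acting on a connected compact group $L$, $\pi_0(L^x)$ is a $p$-group, and then induct on generators of $E$). Combining with the isomorphism $\pi_0(C_{G(\mathbb{C})}(E)) \cong \pi_0(C_{G(K_\ell)}(E'))$ already established, the component group is a $p$-group in every characteristic $\ell \neq p$.

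I expect the main obstacle to be purely expository rather than mathematical: correctly citing and assembling the Friedlander--Mislin machinery so that the group-scheme structure of $\underline{N}_G(E)$ and $\underline{C}_G(E)$ over the Witt ring $R$ is invoked with exactly the hypotheses their theorems need (flatness, smoothness of the identity component, the behaviour of the component group under specialisation), and verifying that their $p$-group hypothesis can be relaxed as claimed. A secondary subtlety is the $p$-power claim for $\pi_0(C_G(E))$: one must make sure the cohomological or inductive argument is applied to the identity component $C^\circ$ (or $H^\circ_C$), which is connected, and that the extension $C^\circ \to C \to \pi_0(C)$ together with $E$-action genuinely forces $\pi_0(C)$ to be a $p$-group — the cleanest formulation is that for a finite $p$-group $\Gamma$ acting on a connected compact Lie group $L$, $\pi_0(L^\Gamma)$ is a $p$-group, which one reduces to the cyclic-of-order-$p$ case and then to a standard fixed-point computation. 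Neither point requires new ideas, but both need to be stated with enough precision that the reader can locate them in \cite{FM} and in the compact Lie group literature.
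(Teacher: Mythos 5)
Your treatment of the root data and the three displayed isomorphisms is essentially the paper's own argument: both realise $N_G(E)$ and $C_G(E)$ as the points of group schemes over a Witt ring $R$ and quote Friedlander--Mislin (their Corollary 4.3, Theorem 4.4 and Proposition 3.1) to transfer component groups and root data across the specialisation $R \to K_\ell$, with the same caveat about relaxing their elementary-abelian hypothesis. The only step you elide is the paper's preliminary reduction to $K_0 = \mathbb{C}$ and $K_\ell = \oFell$, which it justifies via linear reductivity and $G$-complete reducibility (\cite{Bate}); you implicitly assume the residue field can be taken inside $K_\ell$, which is where that reduction is doing work. Where you genuinely diverge is the final claim that $C_{G(K_0)}(E)/C_{G(K_0)}(E)^\circ$ is a $p$-group. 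The paper gets this almost for free from the same machinery: the component group of the generalised reductive scheme $C_G(E)$ is finite \'etale of order coprime to $\ell$, and since the construction can be repeated for \emph{every} prime $\ell \neq p$, the order is coprime to all primes except $p$. Your route instead passes to a maximal compact subgroup via Lemma~\ref{lem:compact} and argues that $\pi_0(C_H(E))$ is a $p$-group by induction on generators of $E$. This is a correct and more classical Lie-theoretic argument, and it has the virtue of not needing to vary $\ell$; but it is not as cheap as "a standard fixed-point computation". The base case (a single semisimple $x$ of order $p$) uses the Steinberg-type fact that every element of $\pi_0(C_L(x))$ has order dividing the order of $x$ (whence the component group has exponent $p$ and is therefore a $p$-group), and the inductive step requires this for a \emph{quasi-semisimple, possibly outer} automorphism of order $p$ of $C_G(E_i)^\circ$, since for non-toral $E$ the intermediate centralisers are disconnected and the next generator need not lie in the identity component; you flag this issue but the quasi-semisimple case is the real content and needs a citation (e.g.\ Digne--Michel or Steinberg \S 8) rather than a Smith-theory gesture. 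In short: same proof for the isomorphisms, a valid but heavier alternative for the $p$-power statement, where the paper's vary-the-prime trick is the slicker option.
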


\begin{proof}
We first claim that without loss of generality we can take $K_{0} = \mathbb{C}$ and $K_{\ell} = \oFell$, the algebraic closure of the field $\mathbb{F}_\ell$ of $\ell$ elements. Since $\ell \neq p$, all representations of a finite elementary abelian $p$-group in characteristic $0$ and characteristic $\ell$ are semisimple, see Maschke's Theorem. This shows that $E$ and $E'$ are \emph{linearly reductive} in the sense of \cite[\S 2]{Bate}, thus $E$ and $E'$ are \emph{$G$-completely reducible} \cite[Lemma 2.6]{Bate}, and our first claim is then \cite[Corollary 5.5]{Bate}.

Now we interpret $E$ as the $\mathbb{C}$-points of a finite subgroup scheme of $G$, defined over an appropriate ring of Witt vectors $R$, where $R$ comes equipped with a fixed embedding into $\mathbb{C}$ and a surjection onto $\oFell$. By hypothesis, the image of $E$ in $G(\oFell)$ is conjugate to $E'$, so we may replace $E'$ by this image without loss of generality. Then $N_{G(\mathbb{C})}(E)$ and $N_{G(\oFell)}(E')$ are the $\mathbb{C}$-points and $\oFell$-points of the scheme-theoretic normaliser of $E$ in $G$, which we denote $N_{G}(E)$, and similarly $C_{G(\mathbb{C})}(E)$ and $C_{G(\oFell)}(E')$ are the points of the scheme-theoretic centraliser $C_{G}(E)$. Then \cite[Corollary 4.3 and Theorem 4.4]{FM} tell us that $N_{G}(E)$ and $C_{G}(E)$ are \emph{generalised reductive groups} over $R$ (see \cite[Definition 2.1]{FM}), and applying \cite[Proposition~3.1(i)]{FM} to each of these, we deduce that the quotient by the connected component is the same over $\mathbb{C}$ and $\oFell$. This gives the second and third isomorphisms above. The first isomorphism is derived in the proof of \cite[Theorem~4.4]{FM}. 

Lastly, since the scheme-theoretic centraliser $C_{G}(E)$ is a generalised reductive group, by definition the quotient of $C_{G}(E)$ by its identity component is a finite \emph{\'{e}tale group scheme} which, by \cite[Corollary 4.3]{FM}, has order coprime to $\ell$; note that in \cite{FM} the quotient by the identity component is denoted $\pi_{0}(-)$. We can repeat this argument for any $\ell \neq p$, replacing $\ell$ and $R$ as appropriate, and so the order is coprime to every prime except possibly $p$; in other words, its order is a $p$-power.
\end{proof}

\subsection{Torality} As hinted at in the introduction, the behaviour of elementary abelian subgroups admits a stark dichotomy depending on whether the subgroups in question are toral or not. In general, toral subgroups are much more well-behaved, and non-toral subgroups are comparatively rare, as illustrated by the following result of Steinberg. To state it, recall that a prime $p$ is called a \emph{torsion prime} for a reductive group $G$ if some subsystem subgroup of $G$ has $p$-torsion in its fundamental group, see \cite[Definition 2.1]{SteinbergTorsion}. Explicitly, if $G$ is simple then torsion primes are: $p \mid n+1$ for $G$ of type $A_n$; $p = 2$ for $G$ of type $B_n$, $C_n$, $D_n$, $G_2$; $p \le 3$ for $G$ of type $F_4$, $E_6$, $E_7$; $p \le 5$ for $G$ of type $E_8$; the exact possibilities depend on the isogeny type of the group and are listed in \cite[Lemma 2.5]{SteinbergTorsion} and \cite[Table 9.2]{mt}. If $G$ is not simple, then $p$ is torsion for $G$ precisely when it is torsion for some simple factor.

\begin{theorem}[\hspace{1sp}{\cite[Theorem 2.28]{SteinbergTorsion}}] \label{thm:steinberg_torsion}
Let $G$ be a reductive algebraic group in characteristic $\ell$ and let $p\ne \ell$ be a prime. The following conditions are equivalent:
\begin{ithm}
	\item The prime $p$ is not a torsion prime for $G$.
	\item The centraliser $C_G(E)$ is connected for every elementary abelian $p$-subgroup $E$.
	\item Every elementary abelian $p$-subgroup of $G$ is toral.
\end{ithm}
\end{theorem}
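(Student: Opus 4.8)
The plan is to prove the chain of equivalences (a) $\Rightarrow$ (b) $\Rightarrow$ (c) $\Rightarrow$ (a), reducing everything to the simple case and to the complex group via the characteristic-independence results of Section~\ref{sec:change_char}. Indeed, by Proposition~\ref{prop:change_char} and Lemma~\ref{lem:change_char}, the torality of a given elementary abelian $p$-subgroup and the component group of its centraliser are unchanged under reduction mod $\ell$, and a reductive group is a central product of simple factors and a central torus, with the torsion primes of $G$ being exactly the union of those of the simple factors; an elementary abelian subgroup is toral in $G$ iff its projection to each factor is toral, and similarly centralisers decompose. So it suffices to treat $G$ simple over $\mathbb{C}$, where we may further pass to a maximal compact subgroup $H$ using Lemma~\ref{lem:compact}.

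For (a) $\Rightarrow$ (b): if $p$ is non-torsion, I would argue that any elementary abelian $p$-subgroup $E$ lies in a maximal torus, and then compute $C_G(E)$ as the subgroup generated by $T$ together with the root subgroups $U_\alpha$ for roots $\alpha$ vanishing on $E$; this centraliser is connected because the obstruction to connectedness is precisely an element of the fundamental group of a subsystem subgroup having order dividing some $p^k$, which cannot happen when $p$ is non-torsion. Concretely, for $E \le T$ the component group $C_G(E)/C_G(E)^\circ$ injects into a subquotient of $\pi_1$ of the subsystem generated by the relevant roots (this is the standard analysis, e.g.\ via the exact sequences relating $C_G(E)$, the subsystem subgroup, and $E$), and the hypothesis forces this to be trivial. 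But of course (b) $\Rightarrow$ (c) has to be invoked here too, since a priori I only know torality of those $E$ already inside $T$; the cleanest route is to prove (b) $\Rightarrow$ (c) first and then establish (a) $\Rightarrow$ (c) directly by the subsystem-subgroup induction below, with connectedness of all centralisers falling out along the way.

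For (b) $\Rightarrow$ (c): suppose all centralisers of elementary abelian $p$-subgroups are connected, and let $E$ be such a subgroup; I would induct on the rank. Pick $1 \ne z \in E$; then $z$ is semisimple, so $C_G(z)^\circ$ is reductive of the same rank as $G$, and by hypothesis $C_G(z) = C_G(z)^\circ$ is connected. Now $E \le C_G(z)$, and centralisers in $C_G(z)$ of elementary abelian subgroups are centralisers in $G$, hence connected; if $\operatorname{rk} C_G(z) < \operatorname{rk} G$ or $C_G(z) \ne G$ we could finish by induction on dimension, but $C_G(z)$ may equal $G$ when $z$ is central. The genuine content is therefore: a connected reductive group in which every elementary abelian $p$-subgroup has connected centraliser contains all its elementary abelian $p$-subgroups in maximal tori --- this is Steinberg's theorem and the non-trivial step, using that $C_G(z)$ is connected reductive and strictly smaller (modulo the central case, handled by passing to $G/Z$) so that $E/\langle z\rangle$ embeds torally in $C_G(z)$, giving a torus of $C_G(z) \le G$ containing $E$.

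For (c) $\Rightarrow$ (a): I would argue contrapositively. If $p$ is a torsion prime for $G$, then by definition there is a closed connected subsystem subgroup $M \le G$ with $\pi_1(M)$ containing an element of order $p$. Such $M$ is of maximal rank, say $M = \langle T, U_\alpha : \alpha \in \Psi\rangle$ for a closed subsystem $\Psi$, and arises as $C_G(E_0)$ for a suitable elementary abelian $p$-subgroup $E_0 \le T$ (or at least $M^\circ$ does, which suffices). The $p$-torsion in $\pi_1(M)$ produces an element in the kernel of the isogeny $M_{\ssc} \to M$ of $p$-power order, which one lifts to a non-toral elementary abelian $p$-subgroup of $M$, hence of $G$: explicitly, one finds a homomorphism from $(\mathbb{Z}/p)^2$ into $M$ whose image is non-toral because a toral subgroup would lift to the simply-connected cover, contradicting the $p$-torsion. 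The hard part of this direction is the explicit construction of the non-toral subgroup from the fundamental-group data --- this is where one invokes the classical constructions (e.g.\ the $\mathfrak{sl}_2$-triples / Borel--de Siebenthal subsystems and the standard non-toral $(\mathbb{Z}/p)^2$ in $\mathrm{PGL}_p$ and its analogues), or simply cites Steinberg's original argument. I expect this converse direction, together with the central-element edge case in (b) $\Rightarrow$ (c), to be the main obstacle; everything else is bookkeeping with root data and the reductions already set up in the preliminaries.
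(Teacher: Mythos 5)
First, a point of reference: the paper does not prove this theorem. It is quoted directly from Steinberg \cite[Theorem 2.28]{SteinbergTorsion} with no internal argument, so there is no proof of the authors' to compare against; what follows assesses your sketch as a proof of Steinberg's result. Your overall architecture is the right one, and your (b)$\Rightarrow$(c) step is essentially sound: for $z \in E$ non-central, $C_G(z)$ is connected reductive of smaller dimension, hypothesis (b) passes to it because $C_{C_G(z)}(E') = C_G(\langle E',z\rangle)$ is again the centraliser of an elementary abelian $p$-group, and the base case $E \le Z(G)$ is trivial, so induction on $\dim G$ closes.

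However, the two load-bearing steps are missing. (i) Your (a)$\Rightarrow$(b) is circular as written --- it begins with ``$E$ lies in a maximal torus'', which is (c) --- and the proposed repair (``establish (a)$\Rightarrow$(c) by the subsystem-subgroup induction below'') does not fix this, because that induction is run under hypothesis (b), not (a). What is actually required is the rank-one input: for a $p$-element $z$ of $T$ in a connected reductive $H$ for which $p$ is not torsion, $C_H(z)$ is connected, and $p$ remains non-torsion for the subsystem subgroup $C_H(z)^{\circ}$ so that the induction can continue. Identifying $C_H(z)/C_H(z)^{\circ}$ with the $p$-torsion of a quotient of (co)character lattices attached to the subsystem $\{\alpha : \alpha(z)=1\}$ (via the extended affine Weyl group) is the technical heart of Steinberg's paper, and your sketch only gestures at it. (ii) For (c)$\Rightarrow$(a) you must actually exhibit commuting elements $\bar{x},\bar{y}$ of order $p$ in the subsystem subgroup $M$ whose lifts to $M_{\ssc}$ satisfy $[x,y]=c$ for a given order-$p$ element $c \in \pi_{1}(M)$; then the preimage of $\langle \bar{x},\bar{y}\rangle$ is non-abelian while the preimage of any torus is a torus, so the subgroup is non-toral. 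This is the $\PGL_{p}$-type construction ($y$ a Weyl representative realising the diagram symmetry attached to $c$, and $x \in T$ with $yxy^{-1}=cx$), and deferring it to ``classical constructions'' leaves the direction unproved. A secondary caution: your reduction to the simple case via ``projections to central-product factors'' is delicate, since all three conditions are isogeny-sensitive ($\SL_{p}$ has no torsion primes while $\PGL_{p}$ does, and the non-toral $p^{2}$ in $\PGL_{p}$ is not the image of an elementary abelian subgroup of $\SL_{p}$), so neither torality nor connectedness of centralisers passes naively through central quotients in the direction your reduction needs.
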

 
We close this subsection with the following useful lemma.

\begin{lemma} \label{lem:istoral} \label{lem:centdim} \label{lem:istoral_isogeny}
Let $G$ be a reductive algebraic group in characteristic $\ell$ and let $E \le G$ be an elementary abelian $p$-subgroup, where $p \neq \ell$.
\begin{ithm}
\item The subgroup $E$ is toral in $G$ if and only if $E \le C_{G}(E)^{\circ}$.
\item Let $Z \le G$ be central. Then $E$ is toral in $G$ if and only if $EZ/Z$ is toral in $G/Z$.
\item Suppose that $\ell = 0$ and view the Lie algebra of $G$ as a $KE$-module $L$ with character $\chi_L$. Then
\[ \dim C_{G}(E) = \frac{1}{|E|} \sum\nolimits_{x \in E} \chi_{L}(x). \]
\end{ithm}
\end{lemma}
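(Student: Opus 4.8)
The plan is to prove the three parts in order, since each is short and uses standard structure theory of reductive groups together with the results already assembled in this section.

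For part (a), the key observation is that $E$ consists of semisimple elements (as $p \neq \ell$), so $E$ is contained in some maximal torus of $G$ if and only if $E$ is contained in a maximal torus of $C_G(E)^\circ$. Indeed, if $E \le T$ for a maximal torus $T$, then $T \le C_G(E)$ is connected and hence $T \le C_G(E)^\circ$, giving $E \le C_G(E)^\circ$. Conversely, if $E \le C_G(E)^\circ$, then $E$ is a subgroup of semisimple elements lying in the centre of the connected reductive group $C_G(E)^\circ$; since the centre of a connected reductive group is contained in every maximal torus, $E$ lies in a maximal torus of $C_G(E)^\circ$, which in turn lies in a maximal torus of $G$. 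So $E$ is toral. This direction uses only that $E$ is central in $C_G(E)^\circ$ and consists of semisimple elements.

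For part (b), write $\bar G = G/Z$ and $\pi \colon G \to \bar G$ the quotient map, with $\bar E = \pi(E) = EZ/Z$. If $E \le T$ for a maximal torus $T$ of $G$, then $\pi(T)$ is a maximal torus of $\bar G$ containing $\bar E$, so $\bar E$ is toral. For the converse, suppose $\bar E$ is toral, so by part (a) we have $\bar E \le C_{\bar G}(\bar E)^\circ$. One checks that $\pi^{-1}(C_{\bar G}(\bar E))$ has identity component contained in $C_G(E)^\circ \cdot Z$: any $g$ centralising $E$ modulo $Z$ satisfies, for each $e \in E$, $[g,e] \in Z$, and the map $e \mapsto [g,e]$ is a homomorphism $E \to Z$; in the connected group $\pi^{-1}(C_{\bar G}(\bar E))^\circ$ this homomorphism must be trivial since a connected group has no nontrivial homomorphism to a group in which the relevant torsion is finite — more carefully, the commutator map $g \mapsto [g,e]$ is a morphism of varieties from a connected group to $Z$, and $Z$ is abelian with finite component group intersecting the relevant torsion trivially once we restrict to elements of finite order; this forces $E$ to be centralised. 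Hence the preimage in $G$ of a maximal torus of $C_{\bar G}(\bar E)^\circ$ containing $\bar E$ is a connected group lying in $C_G(E)^\circ$ (up to the central part) and containing $E$, so $E \le C_G(E)^\circ$ and $E$ is toral by part (a). The main obstacle here is making the commutator-homomorphism argument fully rigorous in the algebraic-group setting; alternatively one can invoke Lemma~\ref{lem:change_char} to reduce to the complex case and argue there using connectedness of the relevant preimage.

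For part (c), assume $G$ is complex and let $L = \Lie(G)$, regarded as a $\mathbb{C}E$-module via the adjoint action, with character $\chi_L$. Since $E$ is finite abelian and acts by semisimple automorphisms, $L$ decomposes as a direct sum of one-dimensional eigenspaces indexed by characters of $E$; the fixed space $L^E = \Lie(C_G(E))$ has dimension equal to the multiplicity of the trivial character, which by orthogonality of characters of the finite abelian group $E$ equals $\frac{1}{|E|}\sum_{x \in E}\chi_L(x)$. Since $C_G(E)$ and $C_G(E)^\circ$ have the same Lie algebra and $\dim C_G(E) = \dim C_G(E)^\circ = \dim \Lie(C_G(E))$, this gives the claimed formula. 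This part is essentially immediate once the right fact ($\Lie(C_G(E)) = \Lie(G)^E$ for $E$ of order coprime to the characteristic) is cited; that identity itself follows because the adjoint action of the reductive group $C_G(E)^\circ$ on $L$ has fixed points exactly $L^{C_G(E)} = L^E$, and for a linearly reductive finite group the dimension of the fixed space equals the averaged character. I expect part (b) to be the main obstacle; parts (a) and (c) are routine applications of the semisimplicity of $E$ and basic structure theory.
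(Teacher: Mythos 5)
Your part (a) is the paper's own argument, almost word for word: $T\le C_G(E)^\circ$ in one direction, and $E\le Z(C_G(E)^\circ)$ together with the fact that the centre of a connected reductive group lies in every maximal torus in the other. For parts (b) and (c) the paper gives no argument at all, simply citing \cite[Theorem 8.2]{Andersen}, so your direct proofs are a genuinely different (and self-contained) route. Part (c) is correct and standard: over $\mathbb{C}$ one has $\Lie(C_G(E))=L^E$, and the dimension of the fixed space of the linearly reductive group $E$ is the average of the character values.

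Part (b) is where your write-up has a genuine gap, even though the strategy is salvageable. The justification that the commutator morphism $g\mapsto[g,e]$ is constant on $H^\circ:=\pi^{-1}(C_{\bar{G}}(\bar{E}))^\circ$ (``$Z$ is abelian with finite component group intersecting the relevant torsion trivially\dots'') does not actually establish anything: the lemma does not assume $Z$ finite, so $Z$ may be positive-dimensional and a morphism from a connected group into $Z$ need not be constant; nor is the component group of $Z$ the relevant object. The correct observation is that $[g,e]=(geg^{-1})e^{-1}$ is central and both $geg^{-1}$ and $e$ have order dividing $p$, so $[g,e]^p=1$; hence the morphism lands in the $p$-torsion $Z[p]$, which \emph{is} finite because $Z$ sits inside a maximal torus and $p\ne\ell$, and connectedness then forces $H^\circ\le C_G(E)\,$, so $H^\circ\le C_G(E)^\circ$. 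Your concluding step also overreaches: from $\bar{E}\le C_{\bar{G}}(\bar{E})^\circ=\pi(H^\circ)$ you only obtain $E\le H^\circ Z$, not $E\le C_G(E)^\circ$, since $Z$ need not lie in $C_G(E)^\circ$. The clean finish is: take a maximal torus $\bar{S}$ of $C_{\bar{G}}(\bar{E})^\circ$ containing $\bar{E}$; its preimage has identity component a torus $S$ (a connected commutative extension of a torus by a finite central subgroup has trivial unipotent part), so $E\le SZ$; now choose a maximal torus $T$ of $G$ with $S\le T$ and use $Z\le Z(G)\le T$ to conclude $E\le T$, i.e.\ $E$ is toral. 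With these repairs your argument works and avoids the external citation the paper relies on.
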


\begin{proof} Suppose $E$ is contained in some maximal torus $T$ of $G$. Since $T$ is connected and abelian, we have $T \le C_{G}(E)^{\circ}$ and so $E \le C_{G}(E)^{\circ}$. Conversely, suppose $E \leq C_{G}(E)^{\circ}$ and recall that $C_G(E)^\circ$ is reductive, see \cite[Theorem 14.2]{mt}. Since the centre of a connected reductive group is contained in every maximal torus, $E$ is contained in every maximal torus of $C_{G}(E)^{\circ}$, in particular $E$ is contained in some torus of $G$. This proves (a). Parts (b) and (c) are proved in \cite[Theorem 8.2]{Andersen}.
\end{proof}

\subsection{Semisimple elements of small order} \label{secGenProp} In our discussion of non-toral elementary abelian subgroups we will require the information provided in Table~\ref{tabsmallelts} below, on certain conjugacy classes of semisimple elements in exceptional groups~$G$. We give the centraliser of each such element $g$, as well as the trace (Brauer character) $\chi_{L}(g)$ on the Lie algebra $L(G)$ of $G$; for $G \neq E_8(\mathbb{C})$ we also give the trace $\chi_{\rm min}(g)$ on a non-trivial module of least dimension. By the results of Section~\ref{sec:change_char}, this information remains the same for the corresponding algebraic group in any characteristic coprime to the element order. These results can be obtained  using the algorithm of Moody and Patera \cite{MoodyPatera}, but they have already appeared throughout the literature, in particular \cite[Table 2]{CoWaF4E6}, \cite[Tables 4, 6]{CoGr}, \cite[Table 10]{MoodyPatera}, and \cite[Tables 4.3.1, 4.3.2, and 4.7.1]{Gor}. We have chosen our class labels to be consistent with these references.

If $r$ is a prime, then $r^k\text{X}$ stands for a conjugacy class of elements of order $r^{k}$, labelled by $\text{X}$. In class labels, numbers in brackets indicate that these powers form distinct conjugacy classes -- for instance squares of elements in the class $3\B[2]$ form a distinct class (not otherwise listed), whose traces will be the complex conjugates of the traces shown. The notation $4\A[\A]$ and $4\classH[\A]$ indicates that the squares of these elements lie in class $2\A$.

In the description of the centraliser, $T_j\cong (\mathbb{C}^\times)^j$ indicates a $j$-dimensional torus.  The element $\omega_3$ is a fixed cube root of unity. The isogeny types of the groups in the centralisers can be determined from the action of the groups on $L(G)$ and the minimal module, listed for instance in \cite[\S 8]{LSreductive}: the correct group is the smallest group (in the sense of taking quotients) which acts faithfully on some composition factor of $L(G)$ if $G$ is adjoint, or of the minimal module if $G$ is simply connected of type $E_6$ or $E_7$.

\begin{table}[htbp]\footnotesize
\caption{Certain elements of small order in simply connected algebraic groups}
\begin{tabular}{cccrr}
$\pmb{G}$ & {\bf Class} & $\pmb{C_{G}(x)}$ & $\pmb{\chi_{L}}$ & $\pmb{\chi_{\rm min}}$ \\ \hline \Bstrut
$G_{2}(\mathbb{C})$ & 2A & $\SL_2(\mathbb{C}) \circ \SL_2(\mathbb{C})$ & $-2$ & $-1$  \\ \hline \Bstrut
$F_{4}(\mathbb{C})$ 
	& 2A & $\Sp_6(\mathbb{C}) \circ \SL_2(\mathbb{C})$  & $-4$ & $2$ \\ 
	& 2B & $\Spin_9(\mathbb{C})$ & $20$ & $-6$ \\ 
	& 3A & $\Sp_6(\mathbb{C})\circ T_1$ & $7$ & $8$ \\ 
	& 3C & $\SL_3(\mathbb{C})\circ\SL_3(\mathbb{C})$  & $-2$ & $-1$ \\ 
	& 3D & $\Spin_7(\mathbb{C})\circ T_1$ & $7$ & $-1$ \\ \hline 
$E_{6,\ssc}(\mathbb{C})$ 
	& 2A & $\SL_2(\mathbb{C})\circ\SL_6(\mathbb{C})$ & $-2$ & $3$ \\ 
	& 2B & $\Spin_{10}(\mathbb{C})\circ T_1$ & $14$ & $-5$ \\ 
	& 3A & $\SL_6(\mathbb{C}) \circ T_1$ & $15$ & $9$ \\ 
	& 3B[2] & $\SL_6(\mathbb{C})\circ T_1$ & $15$ & $9\omega_{3}$ \\ 
	& 3C & $\SL_3(\mathbb{C})\circ\SL_3(\mathbb{C})\circ\SL_3(\mathbb{C})$ & $-3$ & $0$ \\ 
	& 3D & $\Spin_8(\mathbb{C})\circ T_2$ & $6$ & $0$ \\ 
	& 3E[2] & $E_{6,\ssc}(\mathbb{C})$ & $78$ & $27\omega_{3}$ \\ \hline
$E_{7,\ssc}(\mathbb{C})$
	& 2A & $E_{7,\ssc}(\mathbb{C})$ & $133$ & $-56$ \\ 
	& 2B & $\SL_2(\mathbb{C})\circ \Spin_{12}(\mathbb{C})$ & $5$ & $8$ \\ 
	& 2C & $\SL_2(\mathbb{C})\circ \Spin_{12}(\mathbb{C})$ & $5$ & $-8$ \\ 
	& 3A & $\SL_7(\mathbb{C})\circ T_1$ & $7$ & $-7$ \\ 
	& 3B & $E_{6,\ssc}(\mathbb{C})\circ T_1$ & $52$ & $-25$ \\ 
	& 3C & $\SL_6(\mathbb{C})\circ \SL_3(\mathbb{C})$ & $-2$ & $2$  \\ 
	& 3D & $\SL_2(\mathbb{C})\circ \Spin_{10}(\mathbb{C})\circ T_1$ & $7$ & $2$ \\ 
	& 3E & $\Spin_{12}(\mathbb{C})\circ T_1$ & $34$ & $20$ \\ 
	& 4A[A] & $\SL_8(\mathbb{C})/\left<-I\right>$ & $-7$ & $0$ \\ 
	& 4H[A] & $E_{6,\ssc}(\mathbb{C})\circ T_{1}$ & $25$ & $0$ \\ \hline
$E_{8}(\mathbb{C})$
	& 2A & $\SL_2(\mathbb{C})\circ E_{7,\ssc}(\mathbb{C})$ & $24$ & $-$ \\
	& 2B & $\HSpin_{16}(\mathbb{C})$ & $-8$ & $-$ \\
	& 3A & $\SL_9(\mathbb{C})/\left<\omega_{3} I\right>$ & $-4$ & $-$ \\
	& 3B & $\SL_3(\mathbb{C})\circ E_{6,\ssc}(\mathbb{C})$ & $5$ & $-$ \\
	& 3C & $\Spin_{14}(\mathbb{C})\circ T_1$ & $14$ & $-$ \\
	& 3D & $E_{7,\ssc}(\mathbb{C})\circ  T_1$ & $77$ & $-$ \\
	& 5C & $\SL_5(\mathbb{C})\circ \SL_5(\mathbb{C})$ & $-2$ & $-$ \\ 
\hline
\end{tabular}\label{tabsmallelts} 
\end{table}

We will also refer to the  inclusions of classes in Table \ref{tabincl}, induced by the inclusions of simply connected groups $F_{4}(\mathbb{C}) < E_{6,\ssc}(\mathbb{C}) < E_{7,\ssc}(\mathbb{C}) < E_{8}(\mathbb{C})$. Some of this information is given in the above references, but it can also be determined directly from the character values and the known action of each group on the Lie algebra and minimal module of the next.

\begin{table}[htbp]
\small
\caption{Inclusions of certain conjugacy classes}  
\begin{tabular}{c@{\hspace{-.5\arrayrulewidth}}c@{\hspace{-.5\arrayrulewidth}}c@{\hspace{-.5\arrayrulewidth}}c@{\hspace{-.5\arrayrulewidth}}c@{\hspace{-.5\arrayrulewidth}}c@{\hspace{-.5\arrayrulewidth}}c}
$F_{4}$  class  & & $E_{6}$ class & & $E_{7}$ class  & & $E_{8}$ class  \\ 
\cline{1-7} \Bstrut
2A & $\to$ & 2A & $\to$ & 2B & $\to$ & 2A \\
2B & $\to$ & 2B & $\to$ & 2C & $\to$ & 2B \\
& & & & 2A & $\to$ & 2A \\ \hline
\end{tabular}\label{tabincl} 
\end{table}

\section{From algebraic groups to finite groups}\label{secFinGrp}

\noindent Let $G$ be a connected reductive algebraic group over the algebraically closed field $K$ of characteristic $\ell > 0$, with maximal torus $T$ and  Weyl group $W$.
In the following we suppose that a Steinberg endomorphism $F$ of $G$ is given, that $T$ is $F$-stable, and that we know, up to $G$-conjugacy, the elementary abelian subgroups of $G$ with some representative in $G^{F}$, as well as their local structure in $G$. Given this information, we show how to derive a classification of the elementary abelian subgroups of the finite group $G^F$, together with their local structure. The main tool for establishing this result is an application of the Lang-Steinberg Theorem. Note that the results of this section hold for all finite subgroups; we will return to the particular case of elementary abelian $p$-subgroups in subsequent sections when applying these.

\subsection{The correspondence}
If $A\leq G^F$, then the \emph{$F$-class} of a coset $uN_{G}(A)^{\circ}$ in $ N_G(A)/N_G(A)^\circ$ is the subset $\{ F(g)ug^{-1}N_{G}(A)^{\circ} : g \in N_{G}(A) \}$. Moreover, recall that  $N_G(A)^\circ=C_G(A)^\circ$ if $A\leq G$ is finite.

\begin{proposition}\label{propLS} Let $A \le G$ be a finite subgroup such that $F(A)$ and $A$ are conjugate in $G$. The following hold:
\begin{ithm}
\item[\rm (a)] There exists a $G$-conjugate of $A$ which is $F$-stable.
\item[\rm (b)]  Suppose $A$ has a $G$-conjugate in $G^F$. Replacing $A$ by this conjugate, there is a bijection between $G^{F}$-classes of subgroups of $G^{F}$ which are $G$-conjugate to $A$, and $F$-classes in $N_{G}(A)/C_{G}(A)^{\circ}$ contained in $C_{G}(A)/C_{G}(A)^{\circ}$:  the $F$-class of $w \in C_G(A)/C_G(A)^{\circ}$ corresponds to the $G^{F}$-class of subgroups with representative $A_{w} = {^g}A$, where $g \in G$ is chosen with $g^{-1}F(g)C_{G}(A)^{\circ} = w$.
\end{ithm}
\end{proposition}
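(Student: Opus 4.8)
Part~(a) follows at once from the Lang--Steinberg theorem applied to the connected group $G$. Fix $h\in G$ with $F(A)={}^{h}A$; since $g\mapsto g^{-1}F(g)$ is surjective on $G$, choose $g$ with $g^{-1}F(g)=h^{-1}$. Then $F({}^{g}A)={}^{F(g)}F(A)={}^{F(g)h}A={}^{g}A$, since $F(g)h$ and $g$ differ by the element $g^{-1}F(g)h=1$ of $N_{G}(A)$; so ${}^{g}A$ is $F$-stable.

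For part~(b) the plan is to run the standard Lang--Steinberg ``rationality dictionary'' on the $G$-set $\{{}^{g}A:g\in G\}\cong G/N_{G}(A)$, on which $F$ acts with the point $A$ fixed (as $A\le G^{F}$). The key preliminary observation, which also makes the statement well-posed, is that $F$ acts trivially on $N_{G}(A)/C_{G}(A)$: for $n\in N_{G}(A)$ and $a\in A$ we have ${}^{F(n)}a={}^{F(n)}F(a)=F({}^{n}a)={}^{n}a$, because ${}^{n}a\in A\le G^{F}$, so $n$ and $F(n)$ induce the same automorphism of $A$, whence $n^{-1}F(n)\in C_{G}(A)$. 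Thus every $\bar x\in\bar N:=N_{G}(A)/C_{G}(A)^{\circ}$ satisfies $\bar x^{-1}F(\bar x)\in\bar C:=C_{G}(A)/C_{G}(A)^{\circ}$, from which it follows (using that $\bar C$ is normal in $\bar N$) that an $F$-class of $\bar N$ meeting $\bar C$ is contained in $\bar C$; moreover, whenever ${}^{g}A\le G^{F}$, the Lang element $c:=g^{-1}F(g)$ necessarily centralises $A$ and so determines a class $w(g):=cC_{G}(A)^{\circ}\in\bar C$.

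With this in hand I would build the correspondence. Send the $G^{F}$-class of a subgroup $B\le G^{F}$ that is $G$-conjugate to $A$ to the $F$-class of $w(g)$ in $\bar N$, where ${}^{g}A=B$; replacing $g$ by $gn$ with $n\in N_{G}(A)$ leaves $B$ unchanged and replaces $w(g)$ by an $F$-conjugate, while replacing $B$ by ${}^{x}B$ with $x\in G^{F}$ replaces $g$ by $xg$ and leaves $w(g)$ fixed, so the assignment is well defined. For surjectivity, and to read off the representatives, given $w\in\bar C$ apply Lang--Steinberg on $G$ to get $g$ with $g^{-1}F(g)\in w$; the computation above shows $A_{w}:={}^{g}A$ is fixed pointwise by $F$, so $A_{w}\le G^{F}$ and its class maps to the $F$-class of $w$. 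For injectivity one first checks, by a direct rationality computation, that ${}^{g}A$ and ${}^{g'}A$ (both in $G^{F}$) are $G^{F}$-conjugate precisely when there is $n\in N_{G}(A)$ with $(g')^{-1}F(g')=n\bigl(g^{-1}F(g)\bigr)F(n)^{-1}$; so if $w(g)$ and $w(g')$ are $F$-conjugate in $\bar N$, then after lifting the conjugating coset to an $n\in N_{G}(A)$ and replacing $g$ by $gn$ we are reduced to the case where $(g')^{-1}F(g')$ and $g^{-1}F(g)$ lie in the same coset of $C_{G}(A)^{\circ}$.

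The main obstacle is closing this last gap: promoting ``equal modulo $C_{G}(A)^{\circ}$'' to honest equality. This is where Lang--Steinberg is invoked a \emph{second} time, now on the connected $F$-stable group $C_{G}(A)^{\circ}$, equipped with the twisted endomorphism $\mathrm{Int}(c)\circ F$ (well defined since $c=g^{-1}F(g)\in C_{G}(A)$ normalises $C_{G}(A)^{\circ}$); surjectivity of its Lang map lets one adjust $g$ inside $gC_{G}(A)^{\circ}$ so that $(g')^{-1}F(g')=g^{-1}F(g)$ exactly, whereupon $x:=g'g^{-1}$ satisfies $x^{-1}F(x)=1$, so $x\in G^{F}$ and ${}^{x}B=B'$, completing injectivity. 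The one other point requiring care is matching the left/right convention in the definition of $F$-conjugacy to the relation $(g')^{-1}F(g')=n(g^{-1}F(g))F(n)^{-1}$ that the rationality argument produces; with these two points handled the rest is routine bookkeeping.
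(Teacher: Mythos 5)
Your proof is correct and follows essentially the same route as the paper: both rest on the Lang--Steinberg orbit correspondence applied to the $G$-set $\{{}^{g}A\}$, together with the observation that the Lang element of $g$ lies in $C_{G}(A)$ exactly when ${}^{g}A$ is fixed pointwise, which cuts the correspondence down to the $F$-classes inside $C_{G}(A)/C_{G}(A)^{\circ}$. The only difference is that the paper cites \cite[Theorem 21.11]{mt} as a black box, whereas you unpack it into its two constituent applications of Lang's theorem (on $G$, and on $C_{G}(A)^{\circ}$ with the twisted endomorphism $\mathrm{Int}(c)\circ F$); the left/right $F$-conjugacy convention you flag is a genuine mismatch with the paper's stated definition of $F$-class, but it only relabels classes by inversion and is an issue already present in the paper itself.
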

  
\begin{proof}
To begin, $G$ clearly acts on $M=\{{}^gA: g\in G\}$ transitively. Also $F$ acts on $M$ since $F(A)$ is $G$-conjugate to $A$, and this action is compatible with the $G$-action since $F({}^{g}A) = {}^{F(g)}F(A)$ for $g\in G$. Note that $A$ is closed since it is finite, hence so are $N_G(A)$ and $C_G(A)$. The Lang-Steinberg Theorem then implies the existence of an $F$-stable conjugate of $A$, see \cite[Theorem 21.11]{mt}, so (a) holds, and replacing $A$ by such $F$-stable conjugate there is a bijection between
\[ \{G^F\text{-orbits on }M^F\}\quad\text{and}\quad \{F\text{-classes in } N_G(A)/N_G(A)^\circ\}. \]
Now suppose $A$ is fixed point-wise by $F$, so $A \le G^{F}$, and abbreviate $W(A) = N_{G}(A)/C_{G}(A)^{\circ}$. As shown in the proof of \cite[Theorem 21.11]{mt}, the $F$-class of $w \in W(A)$ corresponds to the $G^{F}$-class of subgroups with representative $A_{w} = {^g}A$, where $g \in G$ satisfies $g^{-1}F(g)C_{G}(A)^{\circ} = w$. For such an element $g$, and for any $a \in A$, we have $F(gag^{-1}) = F(g)aF(g)^{-1} = g(\dot{w}a\dot{w}^{-1})g^{-1}$, for some lift $\dot{w}$ of $w$ to $N_{G}(A)$. Therefore $F$ acts on $A_{w}$ as the map $a \mapsto \dot{w}a\dot{w}^{-1}$ acts on $A$. In particular, $A_{w}^{F} = A_{w}$ if and only if $w \in C_{G}(A)/C_{G}(A)^{\circ}$, so that the $F$-class of $w$ consists of elements in $C_{G}(A)/C_{G}(A)^{\circ}$; thus (b) holds.
\end{proof}

We note that Proposition~\ref{propLS} works for both toral and non-toral elementary abelian subgroups, and for both twisted and untwisted Steinberg morphisms. Importantly, Proposition \ref{propLS} allows us to determine all $G^F$-classes of elementary abelian subgroups if we know the $G$-classes of these subgroups in $G$ with representatives in $G^F$ (whenever they exist). One remaining problem is to find the latter, and Corollary~\ref{cor:exists_finite} below will be useful for that. Clearly, if $A\leq G$ and  there is $g\in G$ such that $B={}^g A\leq G^F$, then $F(B)=B$ and so $A$ and $F(A)$ are conjugate. However, the converse is not always true.  For instance, let $G = \GL_{n}(K)$ where $K$ has characteristic $2$, let $T$ be the subgroup of diagonal matrices, and let $F$ be a Steinberg morphism inducing $x \mapsto x^{2}$ on $T$. Then $T^{F} = 1$ and $G^{F}$ has no elementary abelian $p$-subgroups of rank $n$ for $p \neq 2$, even though $T$ contains an $F$-stable elementary abelian $p$-subgroup of rank $n$; the action of $F$ on $T_{(p)}=\Omega_1(O_p(T))$ is then non-trivial. Despite this subtlety, it remains true that for any given finite subgroup of $G$, there is some $F$ such that $G^{F}$ contains a conjugate of this subgroup, in which case the results of this section tell us exactly how many such classes arise, as well as their normaliser structure.
 
\begin{corollary}\label{cor:exists_finite}
Let $A\le G$ be a finite subgroup.
\begin{ithm}
\item  There is a $G$-conjugate of $A$ in $G^F$ if and only if $A$ and $F(A)$ are $G$-conjugate and there is an $F$-stable $G$-conjugate $B$ of $A$ such that the restriction   $F \colon B \to B$ is induced by some $w \in N_{G}(B)/C_{G}(B)^{\circ}$.
\item If $|N_G(A)/C_G(A)|=|\Aut(A)|$, then there is a $G$-conjugate of $A$ in $G^F$ if and only if $A$ and $F(A)$ are $G$-conjugate.
\end{ithm}
\end{corollary}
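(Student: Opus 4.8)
The plan is to reduce the whole statement to one application of the Lang--Steinberg theorem --- in the form that $g\mapsto g^{-1}F(g)$ is surjective on the connected group $G$, cf.\ \cite[\S21.2]{mt} and the proof of Proposition~\ref{propLS} --- together with the elementary fact that the conjugation homomorphism $N_G(A)/C_G(A)\to\Aut(A)$ is injective, its kernel being $C_G(A)$.

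For part~(a), the forward implication is immediate: if $B={}^gA\le G^F$, then $B$ is an $F$-stable $G$-conjugate of $A$ on which $F$ restricts to the identity automorphism, which is induced by the identity element $w=1$ of $N_G(B)/C_G(B)^{\circ}$; and $F(A)$ is $G$-conjugate to $F(B)=B$, hence to $A$. For the converse I would take the given $F$-stable $G$-conjugate $B$ of $A$ --- whose mere existence already forces $A$ and $F(A)$ to be $G$-conjugate --- together with a lift $\dot w\in N_G(B)$ of $w$, so that $F|_B$ is conjugation by $\dot w$. Using surjectivity of Lang's map I pick $g\in G$ with $g^{-1}F(g)=\dot w^{-1}$; then for $b\in B$ one computes $F(gbg^{-1})=F(g)\,\dot w\,b\,\dot w^{-1}\,F(g)^{-1}=(F(g)\dot w)\,b\,(F(g)\dot w)^{-1}$, and since $F(g)\dot w=g(g^{-1}F(g))\dot w=g\dot w^{-1}\dot w=g$ this equals $gbg^{-1}$. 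Hence ${}^gB\le G^F$ is the desired $G$-conjugate of $A$. This is precisely the untwisting step in the proof of Proposition~\ref{propLS}, specialised to the element $\dot w^{-1}$.

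For part~(b), the additional hypothesis makes the second condition in~(a) automatic. Indeed, injectivity of $N_G(A)/C_G(A)\hookrightarrow\Aut(A)$ means that $|N_G(A)/C_G(A)|=|\Aut(A)|$ holds exactly when this map is onto, i.e.\ when every automorphism of $A$ is induced by an element of $N_G(A)$; and this property passes to every $G$-conjugate of $A$, since conjugating $A$ to $B$ transports the embedding into $\Aut(A)$ to the one into $\Aut(B)$. Now if a $G$-conjugate of $A$ lies in $G^F$, then $A$ and $F(A)$ are $G$-conjugate, just as in~(a). Conversely, assuming $A$ and $F(A)$ are $G$-conjugate, Proposition~\ref{propLS}(a) provides an $F$-stable $G$-conjugate $B$ of $A$; by the previous remark $F|_B\in\Aut(B)$ is induced by some $\dot w\in N_G(B)$, so the hypotheses of the converse part of~(a) are met, and that part yields a $G$-conjugate of $A$ inside $G^F$.

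The one place that calls for care is the converse of~(a): the hypothesis that $F|_B$ is induced by some $w\in N_G(B)/C_G(B)^{\circ}$ should be understood as saying that $F|_B$ lies in the image of $N_G(B)\to\Aut(B)$, and this is exactly what allows a single well-chosen element --- namely a lift of $w^{-1}$ --- to be fed into Lang's theorem in the \emph{ambient} group $G$, rather than being forced to solve the defining equation inside $C_G(B)^{\circ}$; one then has to check that the conjugate so produced is fixed by $F$ pointwise, not merely setwise. Everything else is bookkeeping.
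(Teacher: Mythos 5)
Your proposal is correct and follows essentially the same route as the paper: the forward direction is the trivial observation that $F$ restricts to the identity on a conjugate lying in $G^F$, the converse feeds a lift of $w^{-1}$ into the Lang map and verifies pointwise fixedness by the same computation (the paper solves $g^{-1}F(g)\equiv \dot w^{-1}$ only modulo $C_G(B)^\circ$, which changes nothing since the discrepancy centralises $B$), and part (b) is reduced to (a) via surjectivity of $N_G(B)\to\Aut(B)$ for every conjugate $B$. No gaps.
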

\begin{proof}
\begin{iprf}  
\item Suppose $B={}^g A\leq G^F$ for some  $g\in G$. Then $B$ is $F$-stable and $F(A)={}^{F(g^{-1})g}A$ is $G$-conjugate to $A$. The proof of Proposition \ref{propLS} shows that $F$ acts on $F$-stable $G$-conjugates of $B$ as elements of  $N_{G}(B)/C_{G}(B)^{\circ}$ act on $B$. Conversely, suppose $A$ and $F(A)$ are $G$-conjugate (so $F$-stable $G$-conjugates exist by Proposition \ref{propLS}(a)) and that there is such a $G$-conjugate $B={}^hA$ such that $F\colon B\to B$ is induced by $w\in N_G(B)/C_G(B)^\circ$, that is, $F(b)=\dot{w} b\dot{w}^{-1}$ for some lift $\dot{w}\in N_G(B)$ of $w$. If $g\in G$ satisfies $g^{-1}F(g) C_G(B)^\circ = w^{-1}$, then  ${}^g B = B_{w^{-1}}$ is fixed point-wise by $F$: if $gbg^{-1}\in {}^g B$ with $b\in B$, then $F(gbg^{-1}) = F(g)F(b)F(g)^{-1} = g \dot{w}^{-1} F(b) \dot{w} g^{-1} = gbg^{-1}$, so ${}^{gh}A\leq G^F$, as claimed.
\item This follows readily from the previous proof: using the notation of (a), the assumption on $|\Aut(A)|$ implies that $N_G(B)/C_G(B)\cong \Aut(B)$ for every $F$-stable conjugate of $A$.
\end{iprf}
\end{proof}

\subsection{Local structure}
We continue with the previous notation; moreover, we denote by $wF$ the map given by  $x\mapsto wF(x)w^{-1}$. The following propositions allow us to determine the local structure of the $G$-conjugates of $A\leq G^F$ in $G^F$ from the structure of $C_A(G)$ and $N_G(A)$.

\begin{proposition}\label{prop:CGAw}
Let $A \leq G^{F}$ be a finite subgroup. For $w\in
C_{G}(A)/C_{G}(A)^{\circ}$ let $A_w\leq G^F$ be the $G$-conjugate of
$A$ as defined in Proposition~\ref{propLS}. If
$\dot{w}\in C_G(A)$ is any lift of $w$, then
\[(C_{G}(A_{w})^\circ)^F \cong (C_{G}(A)^{\circ})^{\dot{w}F}; \]
where, as usual, $(C_G(A)^\circ)^{\dot{w} F}$ are the fixed points of $\dot{w} F$ in $C_G(A)^\circ$.
 Furthermore, $(C_{G}(A)^{\circ})^{\dot{w}F}$ is independent of the choice of $\dot{w}$. If $w$ acts as an inner automorphism of
$C_{G}(A)^{\circ}$, then $(C_{G}(A_{w})^{\circ})^{F} \cong (C_{G}(A)^{\circ})^{F}$.
\end{proposition}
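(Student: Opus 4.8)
\noindent The plan is to transport the Steinberg morphism $F$ across the conjugating element $g$ used to construct $A_w$, and then to compare the twisted morphisms $\dot wF$ on $C_G(A)^\circ$ via the Lang--Steinberg Theorem. Write $H=C_G(A)^\circ$. Since $A\le G^F$ is fixed pointwise by $F$ we have $F(C_G(A))=C_G(A)$, so $F$ restricts to a Steinberg morphism of $C_G(A)$ and hence of its identity component $H$; and since $\dot w\in C_G(A)$ normalises $H$, conjugation by $\dot w$ is an algebraic automorphism of $H$, so $\dot wF$ is again a Steinberg morphism of $H$ (cf.\ \cite[\S21]{mt}). Recall from Proposition~\ref{propLS} that $A_w={}^gA$, where $g\in G$ is chosen so that $n:=g^{-1}F(g)$ lies in $C_G(A)$ and represents $w$, i.e.\ $nH=w$.

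First I would transport $F$ through $g$. Since $C_G({}^gA)={}^gC_G(A)$ and conjugation preserves identity components, $C_G(A_w)^\circ={}^gH$, and $\varphi\colon{}^gH\to H$, $x\mapsto g^{-1}xg$, is an isomorphism of algebraic groups. Using $F(g)=gn$ one checks $\varphi(F(x))=n\,F(\varphi(x))\,n^{-1}=(nF)(\varphi(x))$ for all $x\in{}^gH$, so $\varphi$ intertwines $F$ on $C_G(A_w)^\circ$ with $nF$ on $H$, and therefore restricts to an isomorphism $(C_G(A_w)^\circ)^F\cong H^{nF}$. This proves the displayed isomorphism for the particular lift $\dot w=n$.

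Next I would establish independence of the lift using the following standard consequence of the Lang--Steinberg Theorem for the connected group $H$: for any Steinberg morphism $\sigma$ of $H$ and any $h\in H$, the twisted morphism $h\sigma\colon x\mapsto h\,\sigma(x)\,h^{-1}$ is obtained from $\sigma$ by conjugating with any $x\in H$ satisfying $x\,\sigma(x)^{-1}=h$ (such $x$ exists because $x\mapsto x\,\sigma(x)^{-1}$ is surjective onto $H$), and consequently $H^{h\sigma}\cong H^\sigma$. Now if $\dot w'$ is another lift of $w$, then $\dot w'=\dot wc$ for some $c\in H$, and a short computation gives $\dot w'F=({}^{\dot w}c)\,(\dot wF)$ with ${}^{\dot w}c\in H$; applying the above fact with $\sigma=\dot wF$ yields $H^{\dot w'F}\cong H^{\dot wF}$. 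Hence $H^{\dot wF}$ does not depend on the lift, and taking $\dot w'=n$ together with the previous paragraph gives $(C_G(A_w)^\circ)^F\cong H^{\dot wF}$ for every lift $\dot w$ of $w$.

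Finally, suppose $w$ acts on $H$ as an inner automorphism, say conjugation by $\dot w$ agrees on $H$ with conjugation by some $c\in H$. Then $\dot wF$ and $cF$ are literally the same map $H\to H$, so $H^{\dot wF}=H^{cF}$, and the fact of the previous paragraph with $\sigma=F$ and $h=c$ gives $H^{cF}\cong H^F$; combining, $(C_G(A_w)^\circ)^F\cong H^{\dot wF}=H^{cF}\cong H^F=(C_G(A)^\circ)^F$. None of this is deep --- it is the standard twisting calculus around Lang--Steinberg --- and the only point needing care is verifying that $\dot wF$ genuinely is a Steinberg morphism of $H=C_G(A)^\circ$, so that Lang--Steinberg applies and the maps $x\mapsto x\,\sigma(x)^{-1}$ hit all of $H$; this rests on $F$ stabilising $C_G(A)$ (a consequence of $A\le G^F$) and on $\dot w$ normalising $H$. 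Everything else is bookkeeping.
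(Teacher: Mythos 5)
Your argument is correct and follows essentially the same route as the paper's: conjugating by $g$ (with $g^{-1}F(g)=\dot w$) to identify $(C_G(A_w)^\circ)^F$ with $(C_G(A)^\circ)^{\dot wF}$, checking that $\dot wF$ restricts to a Steinberg morphism of the connected centraliser, and then using the standard Lang--Steinberg twisting fact (which the paper cites as \cite[Corollary 21.8]{mt} rather than re-deriving) to get independence of the lift and the inner-automorphism case. No gaps.
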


\begin{proof}
Write $C=C_G(A)$. By construction, $A_{w} = {}^{g}A$ for some $g\in G$ with
$g^{-1}F(g)\in C$ and $w=g^{-1}F(g)C^{\circ}$. Clearly,
$C_{G}(A_w)^\circ = {}^{g}(C^\circ)$.  If $c \in
C^\circ$, then $F(gcg^{-1}) = F(g)F(c)F(g^{-1}) = gcg^{-1}$ if
and only if $\dot{w}F(c)\dot{w}^{-1} = c$ for $\dot{w}=g^{-1}F(g)\in C$, hence
\[(C_G(A_w)^\circ)^F \cong \{c\in C^\circ:c=\dot{w}F(c)\dot{w}^{-1}\}= (C^{\circ})^{\dot{w}F}.\]
Since $A$ is closed and $F$-stable, so is $C$. Now \cite[Theorem 2.1.2(d)]{Gor} proves that the restriction of $F$ to $C$ is a Steinberg endomorphism (in the sense of \cite{Gor}), that is, $F|_C$ is surjective (hence a bijection since $F$ is an abstract group automorphism, hence injective) and $C^F$ is finite. In particular, $F|_C$ is a Steinberg endomorphism in the sense of \cite{mt} and as considered here. It follows from \cite[Proposition 1.1.4(c)]{Gor} that $F(C^\circ)\leq C^\circ$, so $C^\circ$ is $F$-stable, and now \cite[Exercise~30.1]{mt} shows that $F|_{C^\circ}$ is a bijection. In conclusion, $F$ induces a Steinberg automorphism of $C^\circ$. If $c\dot{w}$ with $c\in C^\circ$ is another lift of $w$, then the restrictions of $\dot{w}F$ and $c\dot{w}F$ to $C^\circ$ differ by an inner automorphism of $C^\circ$, and   
\cite[Corollary 21.8]{mt} shows that   $(C^\circ)^{\dot{w}F}$ and  $(C^\circ)^{c\dot{w}F}$ are isomorphic, as claimed. In particular, if $w$ induces an inner automorphism on $C^{\circ}$, then the map $wF$ equals $cF$ for some $c \in C^{\circ}$, and $(C^{\circ})^{wF} = (C^{\circ})^{cF} \cong (C^{\circ})^{F}$.
\end{proof}

\begin{proposition} \label{prop:fincentnorm}
Let $A \leq G^{F}$ be  a finite subgroup, let $w \in C_{G}(A)/C_{G}(A)^{\circ}$, and let $A_{w} \le G^{F}$ be the $G$-conjugate of $A$ under the correspondence in Proposition~\ref{propLS}. Then
\begin{align*}
N_{G}(A_{w})^{F}/(C_{G}(A_{w})^{\circ})^{F} &\cong (N_{G}(A)/C_{G}(A)^{\circ})^{wF}, \\
C_{G}(A_{w})^{F}/(C_{G}(A_{w})^{\circ})^{F} &\cong (C_{G}(A)/C_{G}(A)^{\circ})^{wF}. 
\end{align*}
\end{proposition}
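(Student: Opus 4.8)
The plan is to follow the same template as the proof of Proposition~\ref{prop:CGAw}. We are given $A \le G^F$ and $w \in C_G(A)/C_G(A)^\circ$, and we set $A_w = {}^g A$ where $g \in G$ satisfies $g^{-1}F(g)C_G(A)^\circ = w$; recall $A_w \le G^F$ by Proposition~\ref{propLS}. Write $N = N_G(A)$, $C = C_G(A)$, and note $C^\circ = N^\circ$ since $A$ is finite. As in the proof of Proposition~\ref{propLS}(b), for $a \in A$ we have $F(gag^{-1}) = g(\dot w a \dot w^{-1})g^{-1}$ where $\dot w = g^{-1}F(g) \in C$ is a lift of $w$; since $w \in C/C^\circ$, the element $\dot w$ centralises $A$, so this confirms $F$ fixes $A_w$ pointwise. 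Conjugation by $g$ carries $N$ to $N_G(A_w)$ and $C$ to $C_G(A_w)$, and carries $C^\circ$ to $C_G(A_w)^\circ$. The key computation is that, transporting along ${}^g(-)$, the action of $F$ on $N_G(A_w)$ corresponds to the action of $x \mapsto \dot w F(x) \dot w^{-1} = (wF)(x)$ on $N$: indeed $F({}^g x) = {}^{F(g)}F(x) = {}^{g}({}^{g^{-1}F(g)}F(x)) = {}^g({}^{\dot w}F(x))$ for $x \in N$. The same identity holds with $N$ replaced by $C$ and by $C^\circ$.

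From here the two displayed isomorphisms follow by taking $F$-fixed points on $G$-conjugates and $(wF)$-fixed points on the original groups, together with the behaviour of fixed points under quotients. Concretely: ${}^g(-)$ induces an isomorphism of abstract groups $N/C^\circ \to N_G(A_w)/C_G(A_w)^\circ$ intertwining the $(wF)$-action on the source with the $F$-action on the target (this uses that $F$ stabilises $C_G(A_w)^\circ$, which one gets as in Proposition~\ref{prop:CGAw} via \cite[Theorem 2.1.2(d), Proposition 1.1.4(c)]{Gor} applied to the $F$-stable closed group $C_G(A_w) = {}^g C$, noting $C$ is $F$-stable because $A$ is). Passing to fixed points, $(N/C^\circ)^{wF} \cong (N_G(A_w)/C_G(A_w)^\circ)^F$. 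But $F$ acts on $N_G(A_w)$ stabilising the connected (hence $F$-stable) normal subgroup $C_G(A_w)^\circ$, so by the standard exact-sequence argument — more precisely by \cite[Exercise~30.1]{mt} or the vanishing of $H^1$ of a connected group — the natural map $N_G(A_w)^F \to (N_G(A_w)/C_G(A_w)^\circ)^F$ is surjective, giving $N_G(A_w)^F/(C_G(A_w)^\circ)^F \cong (N_G(A_w)/C_G(A_w)^\circ)^F \cong (N_G(A)/C_G(A)^\circ)^{wF}$, which is the first isomorphism. Replacing $N$ by $C$ throughout yields the second.

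I expect the only genuine obstacle to be the surjectivity of $N_G(A_w)^F \to (N_G(A_w)/C_G(A_w)^\circ)^F$, i.e. the claim that every $F$-fixed coset contains an $F$-fixed representative. This is exactly where connectedness of $C_G(A_w)^\circ$ and the Lang--Steinberg theorem enter: given $nC_G(A_w)^\circ$ fixed by $F$, one has $n^{-1}F(n) \in C_G(A_w)^\circ$, and since $C_G(A_w)^\circ$ is connected and $F$ (equivalently its restriction, which is a Steinberg endomorphism by the argument of Proposition~\ref{prop:CGAw}) satisfies Lang--Steinberg, we may write $n^{-1}F(n) = c^{-1}F(c)$ with $c \in C_G(A_w)^\circ$, whence $nc^{-1} \in N_G(A_w)^F$ represents the coset. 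Everything else — $F$-stability of $C$ and of $C^\circ$, the intertwining identity $F \circ {}^g(-) = {}^g(-) \circ (wF)$, independence of the choice of lift $\dot w$ (already handled in Proposition~\ref{prop:CGAw}) — is routine bookkeeping, so I would state these briefly and cite Proposition~\ref{prop:CGAw} and \cite[Exercise~30.1, Corollary~21.8]{mt} rather than reprove them.
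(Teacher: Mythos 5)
Your proposal is correct and follows essentially the same route as the paper: conjugation by $g$ identifies the $F$-action on $N_{G}(A_{w})$ with the $\dot{w}F$-action on $N_{G}(A)$, and the isomorphisms then reduce to the surjectivity of the fixed points onto the fixed points of the quotient by the connected group $C_{G}(A)^{\circ}$, which the paper obtains by citing \cite[Theorem 1.2.1(d)]{Gor} and you obtain by the equivalent direct Lang--Steinberg argument. The only (immaterial) difference is that you carry out this surjectivity step on the $A_{w}$ side rather than after transporting to the $A$ side.
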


\begin{proof}
Write $C=C_G(A)$ and $N=N_G(A)$. By construction, we have $A_{w} = gAg^{-1}$ for some $g \in G$ such that $g^{-1}F(g)C^{\circ} = w$. Let $\dot{w} = g^{-1}F(g) \in C$. If $n \in N$, then
\begin{align*}
\begin{array}{lrl} 
& gng^{-1} &\in N_{G}(A_{w})^{F} \\[0.5ex]
\iff & gng^{-1} &= F(gng^{-1}) = F(g)F(n)F(g)^{-1} = g\dot{w}F(n)\dot{w}^{-1}g^{-1} \\[0.5ex]
\iff &\dot{w}F(n)\dot{w}^{-1} &= n \in N^{\dot{w}F}.
\end{array}
\end{align*}
Thus, we have an isomorphism $N_{G}(A_{w})^{F} \cong N^{\dot{w}F}$, which also induces isomorphisms $C_{G}(A_{w})^{F} \cong C^{\dot{w}F}$,  $(C_G(A_w)^\circ)^F\cong (C^\circ)^{\dot{w}F}$, and $(N_G(A_w)^\circ)^F\cong (N^\circ)^{\dot{w}F}$. Since $G$ is connected, \cite[Theorem 2.1.2(f)]{Gor} shows that $G^F$ and $G^{\dot{w}F}$ are $G$-conjugate, hence both finite. Since $\dot{w}F$ is also bijective on $G$, it is a Steinberg morphism (in the sense of \cite{Gor}). Recall that $N$ and $C$ are closed and mapped under $\dot{w}F$ into themselves. Now \cite[Theorem 2.1.2(d)]{Gor} shows that $wF$ induces Steinberg morphisms on $N$ and on $C$, respectively. Applying \cite[Theorem 2.1.2(d)]{Gor} to $N$ and to $C$, respectively, we see that $\dot{w}F$ maps $N^\circ$ and $C^\circ$ onto themselves and that the natural maps $N^{\dot{w}F}\to (N/N^\circ)^{\dot{w}F}$ and $C^{\dot{w}F}\to (C/C^\circ)^{\dot{w}F}$ are surjective. The claim follows.
\end{proof}

\subsection{The $F$-action on $N_G(A)/C_G(A)^\circ$}

The previous results require us to study the action of $F$ on the normaliser quotient. 

\begin{proposition} \label{prop:Steinbergaut}
Let $A \le G^{F}$ be a finite subgroup. The map $N_{G}(A) \to N_{G}(A)$ induced by $F$ is given by $n \mapsto n \phi(n)$, where $\phi$ is a $1$-cocycle $N_{G}(A) \to C_{G}(A)$, that is, $\phi$ satisfies
\[ \phi(n_{1}n_{2}) = (\phi(n_{1})^{n_{2}})\phi(n_{2}) \]for all $n_1,n_2\in N_G(A)$. In particular, $F$ induces the identity map on $N_{G}(A)/C_{G}(A)$.
\end{proposition}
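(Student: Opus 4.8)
The plan is to exhibit $\phi$ explicitly and verify the two required properties by direct computation. First I would set $\phi(n) = n^{-1}F(n)$ for $n \in N_G(A)$. This makes sense: since $F$ is a Steinberg morphism it is an automorphism of abstract groups, and $A$ is $F$-stable (as $A \le G^F$), so $F$ maps $N_G(A) = N_G(F(A))$ onto itself. The first substantive step is to check that $\phi(n) \in C_G(A)$ for every $n \in N_G(A)$. For $a \in A$ we have $F(a) = a$, and $nan^{-1} \in A \le G^F$ gives $F(nan^{-1}) = nan^{-1}$; combining these yields $F(n)\,a\,F(n)^{-1} = n a n^{-1}$, and conjugating both sides by $n^{-1}$ gives $\phi(n)\,a\,\phi(n)^{-1} = a$, so $\phi(n)$ centralises $A$.

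Next I would verify the cocycle identity. By construction $F(n) = n\phi(n)$, so for $n_1, n_2 \in N_G(A)$ one computes $\phi(n_1 n_2) = (n_1 n_2)^{-1}F(n_1)F(n_2) = n_2^{-1}\phi(n_1)F(n_2) = \bigl(n_2^{-1}\phi(n_1)n_2\bigr)\bigl(n_2^{-1}F(n_2)\bigr) = \phi(n_1)^{n_2}\phi(n_2)$, which is exactly the stated relation under the paper's convention $h^{g} = g^{-1}hg$. Finally, since $\phi(n) \in C_G(A)$ for all $n$, we have $F(n)C_G(A) = n\phi(n)C_G(A) = nC_G(A)$, so $F$ acts trivially on $N_G(A)/C_G(A)$, giving the last assertion.

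I do not expect any real obstacle here: the argument is a bare-hands verification using only that $F$ is an abstract automorphism fixing $A$ pointwise. The only points needing a little care are keeping the conjugation convention consistent in the cocycle computation and noting at the outset that $F$ genuinely restricts to an automorphism of $N_G(A)$. In particular, connectedness of $G$ and the component groups $C_G(A)^\circ$, $N_G(A)^\circ$ play no role in this statement.
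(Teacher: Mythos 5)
Your proposal is correct and follows essentially the same route as the paper: define $\phi(n)=n^{-1}F(n)$, use that $F$ fixes $A$ pointwise and that $n$ normalises $A$ to see $\phi(n)\in C_G(A)$, then expand $F(n_1n_2)=F(n_1)F(n_2)$ to get the cocycle identity and the triviality of the induced map on $N_G(A)/C_G(A)$. The computations match the paper's proof up to cosmetic rearrangement.
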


\begin{proof}
By hypothesis, if $n \in N_{G}(A)$ and $a \in A$ then $n^{-1}an = F(n^{-1}an) = F(n)^{-1}aF(n)$, thus $F(n)n^{-1} \in C_{G}(A)$ and the first statement holds. Writing $F(n) = n\phi(n)$, we have $n_{1}n_{2}\phi(n_{1}n_{2}) = F(n_{1}n_{2}) = F(n_{1})F(n_{2}) = n_{1}\phi(n_{1})n_{2}\phi(n_{2})$, from which the cocycle condition on $\phi$ follows.
\end{proof}

While $F$ acts trivially on $N_G(A)/C_G(A)$, it does not necessarily act trivially  on $N_G(A)/C_G(A)^\circ$, even if the action on  $C_G(A)/C_G(A)^\circ$ is trivial: 

  \begin{example} 
    Let $G=\SL_2(K)$ and $G^F=\SL_2(3)$, with $F$  induced by $x\mapsto x^3$. Let $A\leq G^F$ be a Sylow $2$-subgroup and note that $A\unlhd G^F$ and $C_G(A)=\{\pm I_2\}$. If $\alpha$ is a primitive element of $\mathbb{F}_9$, then $m=\diag(\alpha^2,\alpha^6)\in N_G(A)$   satisfies $F(m)\cdot I_2 \cdot m^{-1} = -I_2$. This shows that there is only one $F$-class of $N_G(A)$-orbits in $C_G(A)/C_G(A)^\circ$. In particular, $F$ acts as the identity on $N_G(A)/C_G(A)$, see Proposition~\ref{prop:Steinbergaut}, and on $C_G(A)/C_G(A)^\circ=C_G(A)=\{\pm I_2\}$, but $F$ acts non-trivially on $N_G(A)/C_G(A)^\circ$.
  \end{example}

If $F$ acts trivially on $N_G(A)/C_G(A)^\circ$, then Propositions \ref{prop:CGAw} and \ref{prop:fincentnorm} have the following corollary:

\begin{corollary}\label{cor:fixquo}
Let $A \leq G^{F}$ be finite. If $F$ acts trivially  on $N_G(A)/C_G(A)^\circ$, then there is a bijection between the $N_G(A)/C_G(A)^\circ$-classes in 
$C_G(A)/C_G(A)^\circ$ and the $G^F$-classes of subgroups of $G^F$ that are $G$-conjugate to $A$; this bijection maps an element $w\in C_G(A)/C_G(A)^\circ$ to $A_w\leq G^F$, as defined in the correspondence in Proposition~\ref{propLS}; moreover
\[ 
C_{G^F}(A_w)=(C_G(A_w))^F=(C_G(A)^\circ)^{wF}.C_{C_G(A)/C_G(A)^\circ}(w),
\]
and \[N_{G^F}(A_w)=(N_G(A_w))^F =(C_G(A)^\circ)^{wF}. C_{N_G(A)/C_G(A)^\circ}(w).\]
\end{corollary}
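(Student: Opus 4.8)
The plan is to assemble Corollary~\ref{cor:fixquo} directly from the three preceding results---Proposition~\ref{propLS}, Proposition~\ref{prop:CGAw}, and Proposition~\ref{prop:fincentnorm}---by feeding in the extra hypothesis that $F$ acts trivially on $N_G(A)/C_G(A)^\circ$. First I would observe that when $F$ acts trivially on $N_G(A)/C_G(A)^\circ$, it in particular acts trivially on the subgroup $C_G(A)/C_G(A)^\circ$, so the $F$-classes in $C_G(A)/C_G(A)^\circ$ are exactly the orbits of the conjugation action of $N_G(A)/C_G(A)^\circ$ on $C_G(A)/C_G(A)^\circ$: indeed the $F$-class of $w$ is $\{F(g)wg^{-1}C_G(A)^\circ : g \in N_G(A)\}$, and since $F(g)C_G(A)^\circ = gC_G(A)^\circ$ by triviality, this set equals $\{gwg^{-1}C_G(A)^\circ : g \in N_G(A)\}$, the $N_G(A)/C_G(A)^\circ$-orbit of $w$. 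Plugging this identification into the bijection of Proposition~\ref{propLS}(b) yields the claimed bijection between $N_G(A)/C_G(A)^\circ$-classes in $C_G(A)/C_G(A)^\circ$ and $G^F$-classes of subgroups $G$-conjugate to $A$, with $w \mapsto A_w$.

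For the centraliser and normaliser formulas I would invoke Proposition~\ref{prop:fincentnorm}, which gives $C_{G^F}(A_w) = C_G(A_w)^F \cong C^{\dot{w}F}$ (writing $C = C_G(A)$) fitting into a short exact sequence with $(C^\circ)^{\dot{w}F}$ as kernel and $(C/C^\circ)^{\dot{w}F}$ as the surjective image, and similarly for $N = N_G(A)$. Since $F$ acts trivially on $N/C^\circ$, it acts trivially on the subquotient $C/C^\circ$ as well, so the $\dot{w}F$-fixed points in $C/C^\circ$ are precisely $\{v \in C/C^\circ : \dot{w}v\dot{w}^{-1} = v\} = C_{C/C^\circ}(w)$, and likewise $(N/C^\circ)^{\dot{w}F} = C_{N/C^\circ}(w)$. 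Combining this with the kernel $(C^\circ)^{\dot{w}F} = (C^\circ)^{wF}$ (which by Proposition~\ref{prop:CGAw} is well-defined independent of the lift) gives the extensions $C_{G^F}(A_w) = (C_G(A)^\circ)^{wF}.C_{C_G(A)/C_G(A)^\circ}(w)$ and $N_{G^F}(A_w) = (C_G(A)^\circ)^{wF}.C_{N_G(A)/C_G(A)^\circ}(w)$, using $N_G(A)^\circ = C_G(A)^\circ$ throughout.

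The only genuine subtlety---and the step I would be most careful about---is checking that the $F$-class description of Proposition~\ref{propLS}(b) really does reduce to the $N_G(A)/C_G(A)^\circ$-orbit description under the triviality hypothesis, since a priori the $F$-class is defined using the twisted map $g \mapsto F(g)wg^{-1}$ rather than honest conjugation; the point is simply that $F(g)$ and $g$ have the same image in $N_G(A)/C_G(A)^\circ$, which is exactly the hypothesis. A secondary point worth a sentence is that, as in Proposition~\ref{prop:CGAw}, the group $(C_G(A)^\circ)^{wF}$ does not depend on the choice of lift $\dot w$ of $w$, so the notation in the displayed formulas is unambiguous; this was already established and can be cited. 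Everything else is a matter of transporting the isomorphisms of Propositions~\ref{prop:CGAw} and~\ref{prop:fincentnorm} through the identification of $F$-classes with orbits, so no new computation is required.
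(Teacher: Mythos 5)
Your proposal is correct and follows the same route as the paper, which states this corollary without a written proof precisely because it is the direct combination of Propositions~\ref{propLS}, \ref{prop:CGAw} and \ref{prop:fincentnorm} under the triviality hypothesis. Your two key observations---that triviality of the $F$-action turns the $F$-classes of Proposition~\ref{propLS}(b) into ordinary $N_G(A)/C_G(A)^\circ$-conjugation orbits, and that it turns the $\dot{w}F$-fixed points of $N_G(A)/C_G(A)^\circ$ and $C_G(A)/C_G(A)^\circ$ into the centralisers of $w$---are exactly the intended content, and your handling of the well-definedness of $(C_G(A)^\circ)^{wF}$ via Proposition~\ref{prop:CGAw} is appropriate.
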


\smallskip

\subsection{Cohomology}

Recall from \cite[I.\S 5]{serregalois} that if $N$ is a group which acts on a group $C$ by automorphisms, then the \emph{cohomology set} $H^{1}(N,C)$ is the set of all $1$-cocycles, modulo the equivalence relation $\sim$ where $\phi \sim \psi$ if there exists $c \in C$ such that $\phi(n) = (c^n)\psi(n)(c^{-1})$ for all $n \in N$. When $C$ is abelian, this coincides with the usual definition of $H^{1}$ via right-derived functors of Hom. In general, $H^{1}(N,C)$ is not a group, but a pointed set, the distinguished point $0$ given by the class of the map $n \mapsto 1$ for all $n \in N$.

The next result  shows that for certain $A\leq G^F$ and given  $N_G(A)/C_G(A)^\circ$ in the algebraic group, we find a $G$-conjugate $B\leq G^F$ with the same normaliser quotient in $G^F$. 

\begin{corollary} \label{cor:fullout}
Let  $A \le G^{F}$ be a finite subgroup with   $H^{1}(N_{G}(A),C_{G}(A)) = \{0\}$. There exists a subgroup $B \le G^{F}$ which is $G$-conjugate to $A$, such that
\begin{align*}
N_{G}(B)^{F}/(C_{G}(B)^{\circ})^{F} &\cong N_{G}(A)/C_{G}(A)^{\circ}, \ \text{and} \\
C_{G}(B)^{F}/(C_{G}(B)^{\circ})^{F} &\cong C_{G}(A)/C_{G}(A)^{\circ}.
\end{align*}
Moreover, the $F$-classes of $N_{G}(B)/C_{G}(B)^{\circ}$ are precisely the conjugacy classes of $N_{G}(B)/C_{G}(B)^{\circ}$, and for the $F$-stable conjugate $B_{w}$ corresponding to the $F$-class of $w \in C_{G}(B)/C_{G}(B)^{\circ}$ we have
\begin{align*}
N_{G}(B_{w})^{F}/(C_{G}(B_{w})^{\circ})^{F} &\cong C_{N_{G}(B)/C_{G}(B)^{\circ}}(w), \ \text{and} \\
C_{G}(B_{w})^{F}/(C_{G}(B_{w})^{\circ})^{F} &\cong C_{C_{G}(B)/C_{G}(B)^{\circ}}(w).
\end{align*}
\end{corollary}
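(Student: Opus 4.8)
The plan is to exploit the vanishing of $H^{1}$ to replace $A$ by a $G$-conjugate $B \le G^{F}$ on which $F$ acts as the identity; once this is arranged, every assertion in the corollary is a formal consequence of Proposition~\ref{prop:fincentnorm} (and, if one prefers, Corollary~\ref{cor:fixquo}).

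First I would apply Proposition~\ref{prop:Steinbergaut}: the restriction of $F$ to $N_{G}(A)$ has the shape $n \mapsto n\phi(n)$, where $\phi \colon N_{G}(A) \to C_{G}(A)$ is a $1$-cocycle for the conjugation action. Since $H^{1}(N_{G}(A), C_{G}(A)) = \{0\}$, this cocycle is cohomologous to the trivial one, so there is some $c \in C_{G}(A)$ with $\phi(n) = (c^{n})(c^{-1}) = n^{-1}cnc^{-1}$ for every $n \in N_{G}(A)$; substituting back gives $F(n) = n\phi(n) = cnc^{-1}$, so that $F|_{N_{G}(A)}$ is conjugation by the single element $c \in C_{G}(A)$. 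Getting the conventions $c^{n}$ versus ${}^{c}n$ straight here — so that $F$ genuinely emerges as an inner automorphism of $N_{G}(A)$ effected by an element of $C_{G}(A)$ — is the one step I would treat with care; everything afterwards is bookkeeping.

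Next I would untwist this via Lang--Steinberg. As $G$ is connected, there is $g \in G$ with $g^{-1}F(g) = c^{-1}$; set $B = {}^{g}A$. In the notation of Proposition~\ref{propLS} this is the $F$-stable $G$-conjugate $A_{w}$ attached to $w = c^{-1}C_{G}(A)^{\circ} \in C_{G}(A)/C_{G}(A)^{\circ}$, so $B \le G^{F}$ by Proposition~\ref{propLS}(b); alternatively, one checks directly that $F(gag^{-1}) = F(g)aF(g)^{-1} = (gc^{-1})a(cg^{-1}) = gag^{-1}$ for $a \in A$, using $c \in C_{G}(A)$. The key computation is that $F$ is then trivial on all of $N_{G}(B) = {}^{g}N_{G}(A)$: for $n \in N_{G}(A)$, using $F(g) = gc^{-1}$ and $F(n) = cnc^{-1}$, one gets $F(gng^{-1}) = F(g)F(n)F(g)^{-1} = (gc^{-1})(cnc^{-1})(cg^{-1}) = gng^{-1}$.

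Finally, with $F$ acting trivially on $N_{G}(B)$, hence on $N_{G}(B)/C_{G}(B)^{\circ}$ and on $C_{G}(B)/C_{G}(B)^{\circ}$, the definition of $F$-class shows that $F$-classes and ordinary conjugacy classes in $N_{G}(B)/C_{G}(B)^{\circ}$ coincide. The first two isomorphisms follow from Proposition~\ref{prop:fincentnorm} applied to $B$ with $w = 1$ (so $B_{1} = B$), together with the conjugation isomorphisms $N_{G}(B)/C_{G}(B)^{\circ} \cong N_{G}(A)/C_{G}(A)^{\circ}$ and $C_{G}(B)/C_{G}(B)^{\circ} \cong C_{G}(A)/C_{G}(A)^{\circ}$ induced by $g$, since $\cdot^{F}$ is the identity on these quotients. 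For the last two, apply Proposition~\ref{prop:fincentnorm} to $B$ and to $w \in C_{G}(B)/C_{G}(B)^{\circ}$: because $F$ is trivial on $N_{G}(B)/C_{G}(B)^{\circ}$, the twisted map $wF$ is simply conjugation by $w$, whence $(N_{G}(B)/C_{G}(B)^{\circ})^{wF} = C_{N_{G}(B)/C_{G}(B)^{\circ}}(w)$ and likewise for the centraliser quotient. I expect the only genuinely delicate point to be the non-abelian cohomology step above; the Lang--Steinberg untwisting and the two invocations of Proposition~\ref{prop:fincentnorm} are then routine.
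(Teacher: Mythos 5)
Your proposal is correct and follows essentially the same route as the paper: use the vanishing of $H^{1}$ together with Proposition~\ref{prop:Steinbergaut} to exhibit $F|_{N_{G}(A)}$ as conjugation by some $c \in C_{G}(A)$, untwist by Lang--Steinberg to pass to $B = {}^{g}A \le G^{F}$, and then read everything off from Proposition~\ref{prop:fincentnorm}. The only (harmless) variation is that you solve $g^{-1}F(g) = c^{-1}$ exactly in $G$, so $F$ becomes literally the identity on $N_{G}(B)$, whereas the paper solves it only modulo $C_{G}(A)^{\circ}$ and concludes triviality of $F$ on $N_{G}(B)/C_{G}(B)^{\circ}$; both suffice.
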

 
\begin{proof}
  The hypothesis implies that every $1$-cocycle is of the form $n \mapsto (c^{n})c^{-1}$ for some $c \in C_{G}(A)$. Thus, by Proposition~\ref{prop:Steinbergaut},  there is $c\in C_G(A)$ such that  $F \colon N_{G}(A) \to N_{G}(A)$, $n\mapsto cnc^{-1}$. Now define $B=A_u$ as the $F$-stable $G$-conjugate of $A$ corresponding to the $F$-class of $u=c^{-1}C_{G}(A)^{\circ}$, so that the first claim follows from Proposition~\ref{prop:fincentnorm}. Recall that we can define $B=gAg^{-1}$ for some $g\in G$ with $F(g)C_G(A)^\circ=gc^{-1}C_G(A)^\circ$. Since $F(n)=cnc^{-1}$ for all $n\in N_G(A)$ and $F(g)=gc^{-1}\tilde c$ for some $\tilde c\in C_G(A)^\circ$, it follows that every  $mC_G(B)^\circ\in N_G(B)/C_G(B)^\circ$ is fixed by $F$. Together with Proposition~\ref{prop:fincentnorm}, this implies the second claim.
\end{proof}

\section{Toral subgroups of simple algebraic groups}\label{secAlgGrp}
\noindent Recall the notation we have fixed in Section \ref{sec:notation}, with $G$ a simple algebraic group in characteristic $\ell$. Using \cite[(2.13)(iii)]{Griess}, we can assume that a given elementary abelian $p$-subgroup $E$ (with $p \neq \ell)$ is contained in the normaliser $N_{G}(T)$ of our fixed maximal torus $T$ of $G$. We study separately the cases that $E$ is toral or non-toral: in the former case we assume that $E \leq T$; in the latter case, $E$ has non-trivial image in the Weyl group $W = N_{G}(T)/T$.

Our goal in this section is to describe a practical algorithm for enumerating such toral elementary abelian $p$-groups and determining their normaliser structure. We begin with Proposition~\ref{PropAlgToral} (as stated below) which simplifies our calculations: Part~\eqref{PropAlgToral-a} enables us to deduce all relevant information working solely within $N_{G}(T)$. Part \eqref{PropAlgToral-b} then allows us to describe the normaliser of a toral subgroup by splitting it into canonical sub-quotients which can be studied separately and recombined. Following this proposition, in Section \ref{secCompToral} we then describe our algorithm for enumerating elementary abelian subgroups and determining their normaliser structure. Finally, in Section~\ref{secTransFin} we describe a process for extending this classification of subgroups of $G$ to a classification of subgroups in the corresponding finite groups of Lie type, that is, in $G^{F}$.

Much of the argument below applies equally well to arbitrary abelian subgroups (not just elementary abelian), but we concentrate on elementary abelian subgroups for two reasons: Firstly, methods from linear algebra can be brought to bear, allowing for a very efficient classification algorithm (see Section~\ref{secCompToral} and our implementation \cite{ourcode}); secondly, as explained in the introduction, elementary abelian subgroups have many interesting applications, for example, in the classification of maximal $p$-local and $p$-radical subgroups.

In the following, recall that $A.B$ indicates a (possibly non-split) extension with normal subgroup $A$ and quotient $B$.

\begin{proposition}\label{PropAlgToral} Let $G$ be a simple algebraic group, with maximal torus $T$ and Weyl group $W$. If $A,B\le T$ are  finite subgroups, then the following hold.
\begin{ithm}
\item If $A = B^{g}$ with $g\in G$, then $g = vc$ for some $v \in N_{G}(T)$ and $c \in C_{G}(A)^\circ$; in particular, $A$ and $B$ are conjugate in $N_G(T)$. \label{PropAlgToral-a}
\item We can decompose $N_{G}(A) \cong (C_{G}(A)^{\circ} . (C_{G}(A)/C_{G}(A)^{\circ})) . (N_{G}(A)/C_{G}(A))$, with isomorphisms
  \begin{eqnarray*}
    C_{G}(A)/C_{G}(A)^{\circ}&\cong& C_{W}(A)/W(C_{G}(A)^{\circ})\quad\text{and}\\
    N_{G}(A)/C_{G}(A)&\cong& N_{W}(A)/C_{W}(A),
  \end{eqnarray*}
where $W(C_{G}(A)^{\circ})$ is the Weyl group of the reductive group $C_{G}(A)^{\circ}$. \label{PropAlgToral-b}
\end{ithm}
\end{proposition}
\begin{proof}
\begin{iprf}
\item Note that $A\leq T\cap T^g$. If $a\in A$, then  $a$ is semisimple, and \cite[Theorem~14.2]{mt} shows that $C_{ {G}}(a)^\circ$ is
a connected reductive group containing ${T}$ and $T^g$. Since $A$ is finite, 
induction proves that $C_{ {G}}(A)^\circ$ contains $T$ and $T^g$; these are therefore $C_{G}(A)^{\circ}$-conjugate, hence
${T}={T}^{gc}$ for some $c\in C_{G}(A)^\circ$, that is,  $gc\in N_{G}({T})$.

\item 
In the following, write $C=C_{G}(A)$;  clearly,  $N_{G}(A) = C^\circ . (C/C^\circ). (N_{G}(A)/C)$. It follows from
Part (a) that  $N_{ G}(A) = N_{N_{ G}( T)}(A)C^\circ=N_{N_{ G}( T)}(A)C$, and so 
\begin{eqnarray*}
N_{ G}(A)/C &\cong & N_{N_{ G}( T)}(A) / (N_{N_{ G}( T)}(A)\cap C)\\
&\cong & (N_{N_{ G}( T)}(A) /  T) / (C_{N_{ G}( T)}(A)/ T)\\
&=& N_{W}(A)/ C_{W}(A).
\end{eqnarray*}
Note that \begin{eqnarray*}  N_{N_{ G}( T)}(A)C^\circ/C^\circ &\cong& N_{N_{ G}( T)}(A)/(N_{N_{ G}( T)}(A)\cap C^\circ)\\
&\cong& N_W(A) / (N_{C^\circ}(T)/T)\\
&=& N_W(A) / W(C^\circ).
\end{eqnarray*} 
These isomorphisms allow us to identify $C/C^\circ\leq N_{N_{ G}( T)}(A)C^\circ/C^\circ$ as a subgroup of $N_W(A)/W(C^\circ)$, and  we deduce $C/C^\circ \cong C_{N_W(A) / W(C^\circ)}(A) = C_W(A)/W(C^\circ)$.
\end{iprf}
\end{proof}

\subsection{Computation of toral elementary abelian subgroups}\label{secCompToral}
Continuing with the previous notation,  we now explain how to classify, up to conjugacy, the toral elementary abelian $p$-subgroups of $G$ algorithmically, by working in a suitably-chosen finite group of Lie type. For technical reasons, we first assume that $p$ is odd, and treat the case $p = 2$ in a moment.

Recall that $T$ is a fixed maximal torus of $G$ with corresponding set of roots $\Phi$. First, by Dirichlet's Theorem we can choose a prime-power $q$ such that $p$ divides $q-1$; by Proposition~\ref{prop:change_char}, we may assume that the characteristic $\ell$ divides $q$. Second, we can choose a Steinberg morphism $F$ of $G$ which induces the $q$-power map on $T$. This gives rise to the finite group $G^F$, containing the fixed-point subgroup $T^{F}$ which is a product of cyclic groups of order $q-1$. The subgroup $T_{(p)}$ of $T^F$ is defined as $T_{(p)}= \Omega_1(O_p(T))$; it contains a representative of each toral elementary abelian $p$-subgroup of $G$.

Recall from Proposition~\ref{PropAlgToral}\eqref{PropAlgToral-a} that two toral subgroups of $T$ are conjugate in $G$ if and only if they are conjugate under the action of the Weyl group $W = N_{G}(T)/T$. In $G^F$ we therefore consider the \emph{extended Weyl group} $V$ of $G^F$, see \cite[Notation~2.1]{britta}. Originally defined by Tits \cite[\S~4.6]{normalisateurs} when $G$ is simply connected, $V$ is the subgroup of $G$ generated by elements $n_{\alpha}(1) = x_{\alpha}(1)x_{-\alpha}(-1)x_{\alpha}(1)$, where $x_{\pm\alpha}(\pm 1)$ are root elements of $G$ and $\alpha$ ranges over the simple roots of $G$. By construction $V \le N_{G}(T)$, and $V \cap T$ is an elementary abelian subgroup of order $\gcd(2,\ell-1)^r$, where $r$ is the rank of $G$; the corresponding quotient is the Weyl group $W$. Specifically,  $V$ normalises $T^F$, and $VT^{F}/T^{F} \cong W$. Our choice of $F$ and \cite[Section~12.1 and p.\ 61]{carter72} imply that $V\leq G^F$, and therefore the action of $W$ on $T_{(p)}$ can be viewed inside the finite group $VT^F \leq G^F$. In our setting, $G$ may not be simply connected but the same definition $V = \left< n_{\alpha}(1) : \alpha \text{ a simple root}\right>$ still produces a subgroup normalising $T$ and $T^{F}$, and mapping surjectively to $W$ under the quotient map $N_{G}(T) \to W$. We compute $\tilde W\leq \GL_r(p)$ as the image of $V$ defined by its action on $T_{(p)}$, and we determine all toral elementary abelian $p$-subgroups of $G$, up to conjugacy, by computing the $\tilde{W}$-conjugacy classes of elementary abelian subgroups contained in $T_{(p)}$. In the following, let $E$ be such an elementary abelian subgroup; we now describe how to determine the structure of $N_G(E)$ within $G^F$.
 
By Proposition~\ref{PropAlgToral}(b), to describe the structure of $N_{G}(E)$ we need to determine $C_{G}(E)^{\circ}$ and the finite groups $C_{W}(E)/W(C_{G}(E)^{\circ})$ and $N_{W}(E)/C_{W}(E)$. As in the proof of Proposition~\ref{PropAlgToral}(a), it follows from \cite[Theorem 14.2]{mt} and a finite inductive argument that $C_G(E)^\circ$ is generated by $T$ and those root subgroups $U_{\alpha} = \{ x_{\alpha}(c) : c\in K\}$ which commute with $E$. To find which root subgroups we need, it suffices to check which of the finitely many elements $x_{\alpha}(1)$ commute with $E$; note that each $x_\alpha(1)$ lies in $G^F$ by our choice of $F$. This allows us to compute the root datum of $C_G(E)^\circ$. Since $T\leq C_G(E)^\circ$, adding a suitable toral subgroup determines most of the structure of $C_G(E)^\circ$; the remaining work is to determine the isogeny types of the occurring simple factors in $C_G(E)^\circ$. It now remains to understand $C_{W}(E)/W(C_{G}(E)^{\circ})$ and $N_{W}(E)/C_{W}(E)$. The action of $W = N_{G}(T)/T$ on $T$ restricts to an action on the characteristic subgroup $T_{(p)}$, giving a surjective homomorphism $W \to \tilde{W}$. The only algebraic endomorphisms of the multiplicative group $K^{\times}$ are power maps, and it follows that the full group of algebraic automorphisms of $T$ is isomorphic to $\GL_{r}(\Z)$. Moreover, the restriction to $T_{(p)}$ is precisely the homomorphism $\GL_{r}(\Z)\to\GL_{r}(p)$ given by reducing matrix entries modulo $p$.  Restricting this map to the finite subgroup $W\leq \Aut(T)$ gives us the above map  $W \to \tilde{W}$. A result of Minkowski \cite{minkowski}, see also \cite[Lemma 1]{serreminkowski}, tells us that the kernel of $\GL_{r}(\Z) \to \GL_{r}(\Z/m\Z)$ is torsion-free for all integers $m > 2$, thus the induced map $W \to \tilde{W}$ is injective. This proves that we can compute the entire structure of $N_{W}(E)$ inside $\tilde{W}$; in conclusion, the structure of $N_G(E)$ is determined.

When $p = 2$, the above argument goes through similarly, with the exception that Minkowski's Lemma fails to apply to the map $\GL_{r}(\Z) \to \GL_{r}(\Z/2\Z)$; indeed, the kernel can contain elements of order $2$ in this case. For this reason, instead of working with $T_{(2)}$, we pick $q$ so that $4 \mid q-1$, and let $T_{(4)}=\Omega_2(O_2(T))$ be the characteristic subgroup of $T$ generated by elements of order dividing $4$. Then the $W$-orbits on elementary abelian $2$-subgroups of $T$ coincide with the $\tilde{W}$-orbits on elementary abelian $2$-subgroups of $T_{(4)}$, where $\tilde{W}$ is the image of $W$ in $\Aut(T_{(4)}) = \GL_{r}(\Z/4\Z)$. We can now apply Minkowski's Lemma as above to deduce that the map $W \to \tilde{W}$ is an isomorphism. This proves that  for each elementary abelian $2$-subgroup $E\leq T$, the structure of $N_{G}(E)$ is again determined by working in the finite group $N_{\tilde{W}}(E)$.

A Magma implementation of this algorithm is available at \cite{ourcode}.

\subsection{Translation to finite groups} \label{secTransFin}

We use the notation of Section \ref{secFinGrp}, that is,  $G$ is a connected reductive algebraic group over the algebraically closed field $K$ of characteristic $\ell$, with maximal torus $T$ and  Weyl group $W$. The prime $p$ is different from $\ell$, and $F$ is a (possibly twisted) Steinberg endomorphism  $G$ such that  $T$ is $F$-stable. The aim of this section is to describe how to classify, up to $G^F$-conjugacy, the toral elementary abelian subgroups of $G^F$, together with their local structure in $G^F$, assuming that this information is known for $G$. The approach is based on the following lemma (and results of Section \ref{secFinGrp}), and summarised in the subsequent remark.

\begin{lemma} \label{lemToralTrivial}
If $A\leq T\cap G^F$ is toral, then \begin{align*}N_G(A)/C_G(A)^\circ &= N_{G^F}(A)/(C_G(A)^\circ)^F\quad\text{and}\\C_G(A)/C_G(A)^\circ &=C_{G^F}(A)/(C_G(A)^\circ)^F,
\end{align*}and $F$ acts trivially on $N_G(A)/C_G(A)^\circ$.
\end{lemma}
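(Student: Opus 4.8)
The plan is to exploit the fact that $A$ lies in the $F$-stable maximal torus $T$, so that the whole normaliser structure can be tracked inside $N_G(T)$, and then to combine this with the key structural result Proposition~\ref{PropAlgToral} together with the general facts about $F$-actions established in Section~\ref{secFinGrp}.

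First I would observe that $F$ restricts to a Steinberg endomorphism of the closed $F$-stable subgroups $N_G(A)$ and $C_G(A)$, and maps $C_G(A)^\circ$ onto itself; this is exactly the argument already run in the proofs of Propositions~\ref{prop:CGAw} and~\ref{prop:fincentnorm} (via \cite[Theorem 2.1.2]{Gor} and \cite[Exercise~30.1]{mt}). Hence by \cite[Theorem 1.2.1(d)]{Gor} the natural maps $N_G(A)^F \to (N_G(A)/C_G(A)^\circ)^F$ and $C_G(A)^F \to (C_G(A)/C_G(A)^\circ)^F$ are surjective. So the first two displayed equalities will follow once I show that $F$ acts \emph{trivially} on $N_G(A)/C_G(A)^\circ$: triviality gives $(N_G(A)/C_G(A)^\circ)^F = N_G(A)/C_G(A)^\circ$ and $(C_G(A)/C_G(A)^\circ)^F = C_G(A)/C_G(A)^\circ$, and surjectivity of the above maps together with $N_G(A)^F = N_{G^F}(A)$, $C_G(A)^F = C_{G^F}(A)$ (immediate since $A \le G^F$) and $(C_G(A)^\circ)^F$ being the kernel then yields the two isomorphisms, in fact equalities of the quotient groups.

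So the crux is the triviality of the $F$-action on $N_G(A)/C_G(A)^\circ$. By Proposition~\ref{prop:Steinbergaut}, $F$ already acts trivially on $N_G(A)/C_G(A)$, so it remains to handle the finite part $C_G(A)/C_G(A)^\circ$, and, using Proposition~\ref{PropAlgToral}(b), I can identify $C_G(A)/C_G(A)^\circ \cong C_W(A)/W(C_G(A)^\circ)$, a quotient of subgroups of the Weyl group $W = N_G(T)/T$. The point is that $F$ acts on $W$ as an algebraic automorphism of the torus, i.e.\ through a finite-order automorphism $\varphi$ of $W$ (trivial in the untwisted case, a graph automorphism otherwise), and this automorphism must send $A$ to $F(A) = A$ (as $A \le T^F$) and send $C_G(A)^\circ$ to $C_G(A)^\circ$ pointwise-preserved as a subgroup, hence fixes $C_W(A)$ and $W(C_G(A)^\circ)$ setwise, inducing an automorphism of the quotient $C_W(A)/W(C_G(A)^\circ)$. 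Now this automorphism coincides with the $F$-action on $C_G(A)/C_G(A)^\circ$ under the identification of Proposition~\ref{PropAlgToral}(b). The cleanest way to see it is trivial is to use that $F$ fixes $A$ pointwise together with Proposition~\ref{prop:Steinbergaut}: for $n \in N_G(A)$ we have $F(n) = n\phi(n)$ with $\phi(n) \in C_G(A)$; to conclude I need $\phi(n) \in C_G(A)^\circ$. Here I would use that $n$ can be chosen in $N_{N_G(T)}(A)$ (by Proposition~\ref{PropAlgToral}(a), $N_G(A) = N_{N_G(T)}(A)C_G(A)^\circ$, and $C_G(A)^\circ$ contributes nothing to the quotient), and that $F$ preserves $N_G(T)$; then $\phi(n) = n^{-1}F(n) \in N_G(T) \cap C_G(A)$, which is $C_T(A) = T$ since $A \le T$ means $T \le C_G(A)$ — wait, more precisely $N_G(T) \cap C_G(A)$ need not be contained in $T$, but $\phi(n) \in C_G(A)$ normalises $T$, so $\phi(n)$ lies in $N_{C_G(A)}(T)$; since $T$ is a maximal torus of the connected reductive group $C_G(A)^\circ$ and $N_{C_G(A)}(T)/T \cong C_G(A)/C_G(A)^\circ . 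W(C_G(A)^\circ)$ — I will instead argue directly that the image of $\phi(n)$ in $C_G(A)/C_G(A)^\circ \cong C_W(A)/W(C_G(A)^\circ)$ is the class of $n^{-1}\varphi(n)$ where $\varphi$ is the finite-order automorphism $F$ induces on $W$, and then that $\varphi$ fixes $C_W(A)$ elementwise modulo $W(C_G(A)^\circ)$.

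The main obstacle, then, is pinning down precisely why the graph automorphism $\varphi$ that $F$ induces on $W$ acts trivially on the quotient $C_W(A)/W(C_G(A)^\circ)$ rather than merely stabilising it. I expect the resolution to be: $\varphi$ acts trivially on $N_G(A)/C_G(A)$ already (Proposition~\ref{prop:Steinbergaut}), so the only question is the component group $C_G(A)/C_G(A)^\circ$; but an element of this component group is represented by some $w \in C_W(A)$, and $\varphi(w)w^{-1}$ then lies in $C_W(A)$ and represents (by the cocycle formula) the $F$-displacement, which I must show lies in $W(C_G(A)^\circ)$. Since $\varphi$ has finite order and the ambient quotient $C_G(A)/C_G(A)^\circ$ is a finite $p$-group for the primes of interest (Lemma~\ref{lem:change_char}), and since $F$ acts trivially on $C_G(A)/C_G(A)$ hence on any characteristic $p'$-quotient, a short order argument — or the observation that $\varphi$ restricted to the torus $T$ and hence to $A$ is trivial on $A$, forcing the induced map on the stabiliser quotient to be trivial — should close the gap. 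I would write this last step carefully, as it is where the toral hypothesis is genuinely used, deferring to \cite[Corollary 21.8]{mt} and \cite[Theorem 1.2.1]{Gor} for the standard facts about Steinberg endomorphisms of disconnected groups.
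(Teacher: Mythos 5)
There is a genuine gap, and it sits exactly where you flag it: the triviality of the $F$-action on $N_G(A)/C_G(A)^\circ$. Everything else in your outline (the Lang--Steinberg surjectivity of $N_G(A)^F \to (N_G(A)/C_G(A)^\circ)^F$, the identification of the kernel with $(C_G(A)^\circ)^F$) is fine, but none of the closing moves you propose actually establishes the triviality. The suggestion that ``$F$ fixes $A$ pointwise, forcing the induced map on the stabiliser quotient to be trivial'' is false as a general principle: the paper's own example following Proposition~\ref{prop:Steinbergaut} ($G=\SL_2(K)$, $A$ a Sylow $2$-subgroup of $\SL_2(3)$) has $A\le G^F$ and $F$ acting trivially on both $N_G(A)/C_G(A)$ and $C_G(A)/C_G(A)^\circ$, yet non-trivially on $N_G(A)/C_G(A)^\circ$. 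Likewise, the ``short order argument'' cannot work, because the obstruction is not detected by the two quotients $N_G(A)/C_G(A)$ and $C_G(A)/C_G(A)^\circ$ separately; the extension matters. Your first direct attempt (take $n\in N_{N_G(T)}(A)$ and hope $n^{-1}F(n)\in C_G(A)^\circ$) also stalls for the reason you yourself notice: $N_{C_G(A)}(T)$ surjects onto the component group $C_G(A)/C_G(A)^\circ$, so landing in $N_G(T)\cap C_G(A)$ does not put you in the identity component.

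The missing ingredient is that every coset of $C_G(A)^\circ$ in $N_G(A)$ contains an element of $G^F$; this is what genuinely uses torality, and it is supplied by the extended Weyl group. By Proposition~\ref{PropAlgToral}(a), $N_G(A)=N_{N_G(T)}(A)\,C_G(A)^\circ$, and by the construction in Section~\ref{secCompToral} one has $N_G(T)=\langle T,V\rangle=VT$ with $V\le G^F$. Since $A\le T$ gives $T\le C_G(A)^\circ$, every $g\in N_G(A)$ can be rewritten as $g=v c$ with $v\in V\cap N_G(A)\le N_{G^F}(A)$ and $c\in C_G(A)^\circ$. This simultaneously proves surjectivity of $N_{G^F}(A)\to N_G(A)/C_G(A)^\circ$ (hence both displayed equalities directly, without first passing through $F$-fixed points) and shows that $F$ fixes every coset, i.e.\ acts trivially on $N_G(A)/C_G(A)^\circ$. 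Your architecture (prove triviality first, then quote surjectivity onto the fixed-point quotient) would also work once this point is in place, but without it the proof does not close.
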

\begin{proof}
Let $N_G(A)\to N_G(A)/C_G(A)^\circ$ be the natural projection. Its restriction to  $N_{G^F}(A)$ has kernel $C_G(A)^\circ \cap G^F=(C_G(A)^\circ)^F$; thus, the first claim follows if we show that this restriction is surjective. For this recall that the extended Weyl group $V\leq G^F$ satisfies $N_G(T)=\langle T,V\rangle$, see Section~\ref{secCompToral}. Moreover, Proposition~\ref{PropAlgToral} shows that if $g\in N_G(A)$, then $g=vc$ for some $v\in N_G(T)$ and $c\in C_G(A)^\circ$. Since $N_G(T)=\langle V,T\rangle$  and $T\leq C_G(A)^\circ$, we can assume that $v\in N_{G^F}(A)$ and $c\in C_G(A)^\circ$; this shows that $N_{G^F}(A)\to N_G(A)/C_G(A)^\circ$ is surjective.  The claim about the centralisers follows analogously.
\end{proof}

\begin{remark} \label{rem:strategy}
For a simple algebraic group $G$, to classify all toral elementary abelian $p$-subgroups of $G^F$ up to conjugacy, we now have the following recipe.
\begin{ithm}
\item Determine the toral elementary abelian $p$-subgroups $A\leq G$ and their local structure, up to $G$-conjugacy. This can be done with the method described in Section \ref{secCompToral}. For each $G$-class, determine a $G$-class representative $A$ that lies in $G^F$ (if such a representative exists); Corollary \ref{cor:exists_finite} is useful here.
\item For each such representative $A\leq G^F$, determine the  $N_G(A)/C_G(A)^\circ$-classes in  $C_G(A)/C_G(A)^\circ$.
\item For each $N_G(A)/C_G(A)^\circ$-class representative $w\in C_G(A)/C_G(A)^\circ$,  determine the local structure of $A_w\leq G^F$ using Corollary \ref{cor:fixquo}; note that this corollary is applicable due to  Lemma \ref{lemToralTrivial}.
\end{ithm}
In Step (b), we note that if $C_G(A)=C_G(A)^\circ$, then the $G$-class of $A$ contains only one $G^F$-class, and $C_{G^F}(A)=C_G(A)^F$ and $N_{G^F}(A)=C_G(A)^F.(N_G(A)/C_G(A))$. If $C_G(A) \neq C_G(A)^{\circ}$, then  $p$ is a torsion prime by Theorem~\ref{thm:steinberg_torsion}, and some work is required in Step (c). 
\end{remark}

\section{Non-toral subgroups of exceptional algebraic groups} \label{secNontoralAlg}
\noindent We now turn our attention to non-toral elementary abelian subgroups. In this case, the analogue of Proposition~\ref{PropAlgToral} fails, that is, the normaliser structure of such a subgroup is not controlled by the normaliser of a maximal torus, and thus more ad-hoc calculations are required. Nevertheless the results of Section~\ref{sec:change_char} still apply, allowing us to transfer many known results from the characteristic $0$ setting.\enlargethispage{0.5cm}
 
Let $G$ be an exceptional simple algebraic group, still over an algebraically closed field $K$ of characteristic $\ell$, different from a fixed prime $p$. Each non-toral elementary abelian $p$-subgroup of $G$ is contained in a maximal such subgroup, which have been classified up to conjugacy by Griess \cite{Griess} for groups over $\mathbb{C}$. Also for complex groups, when $p$ is odd, a complete description of non-toral elementary abelian $p$-subgroups and their normaliser structure is given by \cite[Section 8]{Andersen}. When $p = 2$ and $G$ is adjoint, much information regarding the collection of \emph{all} elementary abelian $2$-subgroups is given in \cite{Yu}. Comparing this with the information on toral subgroups provided by our algorithm, this allows us to derive a complete list of non-toral subgroups, and it then remains to determine properties of these subgroups.

We summarise these results in the subsequent tables. In these tables,  we also give the distribution of elements among the conjugacy classes of the group $G$. For an elementary abelian $p$-subgroup $E$ of $G$, and for conjugacy classes of order-$p$ elements of $G$ labelled $p\text{X}$, $p\text{Y}$, $p\text{Z}$ etc.\ in Table~\ref{tabsmallelts}, we write $\Dist(E) = p\text{X}_a\text{Y}_b\text{Z}_c\cdots$ to indicate that $|E \cap p\text{X}| = a$, $|E \cap p\text{Y}| = b$, and so on. So for instance in Table~\ref{tabsubs35}, the group labelled $(3^{3})_{c}$ has two elements in class $3\C$ and twenty-four in class $3\D$ (together accounting for all non-identity elements).

\subsection{Statement of results for odd $p$}\label{secNontoral} In view of Proposition~\ref{prop:change_char} and Lemma~\ref{lem:change_char}, the next proposition follows from  \cite[\S 1]{Griess} and \cite[\S 8]{Andersen}.  The class distribution of the non-toral subgroup in $F_{4}$ is derived in the proof of \cite[Theorem 7.4]{Griess}.
  
\begin{proposition}[Griess; Andersen et al.] \label{prop:algsub}
Let $p$ be an odd prime and let $G$ be a simple algebraic group of exceptional type over an algebraically closed field $K$ of characteristic $\ell \neq p$. If $E\leq G$ is a non-toral elementary abelian $p$-subgroup, then $p\in\{3,5\}$ and $(G,E)$ appears in Table~\ref{tabsubs35}. Each line of the table corresponds to a unique $G$-conjugacy class; supplementing comments are given in Remark~\ref{remtabsubs35}.
\end{proposition}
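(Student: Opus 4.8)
The plan is to reduce to the complex case and then quote the existing classifications of Griess and Andersen et al. By Proposition~\ref{prop:change_char}, reduction modulo $\ell$ gives a bijection between the $G(\mathbb{C})$-conjugacy classes and the $G(K)$-conjugacy classes of elementary abelian $p$-subgroups of $G$. This bijection restricts to a bijection between the non-toral classes on each side: indeed, by Lemma~\ref{lem:change_char} it identifies the component group $C_G(E)/C_G(E)^\circ$, hence detects the condition $E \le C_G(E)^\circ$ that characterises torality in Lemma~\ref{lem:istoral}(a); alternatively, the classification of toral subgroups is itself characteristic-independent, as discussed in Section~\ref{secAlgGrp}. By Lemma~\ref{lem:compact}, the complex classification can in turn be performed inside a maximal compact subgroup, which is the setting of \cite{Andersen}. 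Thus it suffices to treat $G$ over $\mathbb{C}$.

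First I would pin down the primes. If $E$ is non-toral, then $C_G(E)$ is disconnected, so by Theorem~\ref{thm:steinberg_torsion} the prime $p$ is a torsion prime for $G$. For the exceptional groups the odd torsion primes are $p = 3$ (for types $F_4$, $E_6$, $E_7$, $E_8$) and $p = 5$ (only for $E_8$); in particular $G_2$ has no odd torsion prime and contributes nothing. Hence $p \in \{3,5\}$, and the possible types of $G$ are exactly those occurring in Table~\ref{tabsubs35}.

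Next comes the classification proper. Every non-toral $E$ is contained in a maximal non-toral elementary abelian $p$-subgroup of $G$, and these are listed by Griess \cite[\S 1]{Griess}; for odd $p$, Andersen et al.\ \cite[\S 8]{Andersen} go further and describe, up to conjugacy, \emph{all} non-toral elementary abelian $p$-subgroups of the complex exceptional groups together with their normaliser structure. I would extract this data, transport it back to $G(K)$ via Proposition~\ref{prop:change_char} and Lemma~\ref{lem:change_char}, and record the result in Table~\ref{tabsubs35}; that each line is a single $G$-conjugacy class is part of what \cite{Andersen} establishes. Finally, for the class distribution of the non-toral $3^3 \le F_4$ --- that is, the number of its non-identity elements lying in each semisimple class of Table~\ref{tabsmallelts} --- I would reproduce the computation in the proof of \cite[Theorem 7.4]{Griess}, which exhibits this subgroup explicitly and reads off the classes from the known action on $L(G)$ and on the $26$-dimensional module, cross-checking against Table~\ref{tabsmallelts}.

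The main obstacle is bookkeeping rather than mathematics: one must reconcile the conventions of \cite{Griess} and \cite{Andersen} --- which work with compact real forms and, for $E_6$ and $E_7$, must keep careful track of the isogeny type --- match their labellings of subgroups and conjugacy classes with ours, and verify that the supplementary data collected in Remark~\ref{remtabsubs35} is internally consistent, in particular compatible with Table~\ref{tabsmallelts} and with the torality output of the algorithm of Section~\ref{secCompToral}.
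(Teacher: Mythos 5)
Your proposal is correct and follows essentially the same route as the paper: reduce to the complex (equivalently compact) case via Proposition~\ref{prop:change_char} and Lemma~\ref{lem:change_char}, then quote the classifications of Griess and Andersen et al., with the class distribution of the non-toral $3^3\le F_4$ taken from the proof of \cite[Theorem 7.4]{Griess}. The extra scaffolding you supply --- using Theorem~\ref{thm:steinberg_torsion} together with Lemma~\ref{lem:istoral}(a) to force $p\in\{3,5\}$, and checking that the characteristic-change bijection preserves (non-)torality --- is left implicit in the paper but is accurate and welcome detail.
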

\pagebreak

{ \small
\begin{table}[htbp]
\caption{Non-toral elementary abelian $p$-subgroups of exceptional algebraic groups, $p$ odd} \label{tabsubs35}
\begin{tabular}{ccccc}
$\pmb{G}$ & $\pmb{E}$ & $\pmb{\Dist(E)}$ & $\pmb{C_{G}(E)}$ & $\pmb{N_{G}(E)}$ \\ \hline \Bstrut
$E_{8}$ & $5^{3}$ & $5\C_{124}$ & $5^{3}$ & $5^{3}\mcolon\SL_{3}(5)$ \\ \hline \Bstrut
$F_{4}$ & $3^{3}$ & $3\C_{26}$ & $3^{3}$ & $3^{3}\mcolon\SL_{3}(3)$ \\ \hline \Bstrut
$E_{6,\ssc}$ & $3^{4}$ & $3\C_{78}^{\phantom{1}}\E_{1}^{\phantom{1}}\E'_{1}$ & $3^4$ & $3^{1+3+3}\mcolon\SL_{3}(3)$ \\ \cdashline{2-5} \Bstrut
& $3^{3}$ & $3\C_{26}$ & $3^4$ & $3 \times (3^{3}\mcolon\SL_{3}(3))$ \\ \hline \Bstrut
$E_{6,\ad}$ & $(3^{4})_{a}$ & $3\C_{62}\D_{18}$ & $(3^{4})_{a}$ & $(3^{4}).((3^{2} \times 3^{2})\mcolon(3 \times \GL_{2}(3)))$ \\
& $(3^4)_{b}$ & $3\AB_{6}\C_{50}\D_{24}$ & $3^{2} \times T_{2}$ & $(3^2 \times T_{2}).(3^{2}\mcolon(\Dih_{12} \times \SL_{2}(3)))$ \\ \cdashline{2-5} \Bstrut
& $(3^3)_{a}$ & $3\C_{20}\D_{6}$ & $3^{2} \times (T_{2}:3)$ & $(3^{2} \times (T_{2}:3)).(3^{1+2}_{+}:2^{2})$ \\
& $(3^3)_{b}$ & $3\C_{26}$ & $(3^{3})_{b}.(3^{3})$ & $(3^{3})_{b}.(3^{3}\mcolon\SL_{3}(3))$ \\
& $(3^3)_{c}$ & $3\C_{2}\D_{24}$ & $3^{2} \times \SL_{3}(K)$ & $(3^{2} \times \SL_{3}(K)).(3^{2}\mcolon(2 \times \SL_{2}(3)))$ \\
& $(3^3)_{d}$ & $3\AB_{2}\C_{16}\D_{8}$ & $3^{2} \times \GL_{2}(K)$ & $(3^{2} \times \GL_{2}(K)).(2 \times \SL_{2}(3))$ \\ \cdashline{2-5} \Bstrut
& $(3^2)_{a}$ & $3\C_{6}\D_{2}$ & $3^{2} \times \PSL_{3}(K)$ & $(3^{2} \times \PSL_{3}(K)).6$ \\
& $(3^2)_{b}$ & $3\D_{8}$ & $3^{2} \times G_{2}(K)$ & $(3^{2} \times G_{2}(K)).\SL_{2}(3)$ \\
\hline \Bstrut
$E_{7}$ & $3^{4}$ & $3\B_{2}\C_{78}$ & $(3^{3} \times T_{1})$ & $(3^3\times T_1).((3^3.\SL_3(3))\mcolon 2)$ \\ \cdashline{2-5} \Bstrut
& $3^{3}$ & $3\C_{26}$ &  $(3^{3} \times \SL_{2}(K))/Z$ & $(3^{3}\mcolon\SL_{3}(3)) \times \SL_{2}(K)/Z$ \\ \hline \Bstrut
$E_{8}$ & $(3^{5})_{a}$ & $3\A_{156}\B_{80}\D_{6}$ & $3^{3} \times T_{2}$ & $(3^{3} \times T_{2}).(3^{4}\mcolon(2^{2} \times \SL_{3}(3)))$ \\ 
& $(3^{5})_{b}$ & $3\A_{162}\B_{80}$ & $3^{5}$ & $3^{5}.(3^{4}\mcolon\Sp_{4}(3).2)$ \\ \cdashline{2-5} \Bstrut
& $(3^{4})_{a}$ & $3\A_{52}\B_{26}\D_{2}$ & $3^{3} \times \GL_{2}(K)$ & $(3^{3} \times \GL_{2}(K)).(\SL_{3}(3) \times 2)$ \\
& $(3^{4})_{b}$ & $3\A_{54}\B_{26}$ & $3^{3} \times (T_{2}:3)$ & $(3^{3} \times (T_{2}:3)).(3^{1 + 4}\mcolon(2 \times \GL_{2}(3)))$ \\
& $(3^{4})_{c}$ & $3\B_{80}$ & $3^{3} \times \SL_{3}(K)$ & $(3^{3} \times \SL_{3}(K)).(3^{3}:(2 \times \SL_{3}(3)))$ \\ \cdashline{2-5} \Bstrut
& $(3^{3})_{a}$ & $3\A_{18}\B_{8}$ & $3^{3} \times \PSL_{3}(K)$ & $(3^{3} \times \PSL_{3}(K)).(3^{2}\mcolon\GL_{2}(3))$ \\
& $(3^{3})_{b}$ & $3\B_{26}$ & $3^{3} \times G_{2}(K)$ & $(3^{3} \times G_{2}(K)).\SL_{3}(3)$ \\ \hline
\end{tabular}
\end{table}
}
 
\begin{remark}\label{remtabsubs35} These remarks supplement Proposition \ref{prop:algsub}. 
\begin{ithm} 
	\item The normaliser structure is taken from \cite[\S 8]{Andersen}, which correct errors in \cite{Griess} for the centralisers and normalisers of the non-toral subgroups $3^{3}$ in $E_{7}$, and $(3^{5})_{a}$ and $(3^{3})_{b}$ in $E_{8}$. Although we only give normaliser structure using short notation, in \cite{Andersen} the precise action of $N_{G}(E)/C_{G}(E)$ is given on each subgroup~$E$.
	\item For adjoint $G$ of type $E_{6}$, in \cite[Theorem 8.10]{Andersen} the distribution is given of the pre-image $\widetilde{E}$ of $E$ in the simply connected cover of $G$. Since multiplication by an element of the centre permutes $3\A$, $3\B$, and elements whose square is in $3\B$ (while preserving the class of elements in $3\C$ and $3\D$), it is straightforward to derive the above distribution of $E$ from that of $\widetilde{E}$, and vice-versa. The notation $3\AB$ denotes the unique conjugacy class which is the image of the classes $3\A$ and $3\B$ (and also the inverse of elements in $3\B$). The inverse of the class $3\E$ is denoted $3\E'$.
	\item For $G$ of type $E_7$, the classification is independent of the isogeny type, because the simply connected group has centre of order  $2$, and now the Schur-Zassenhaus Theorem implies that  for $p \neq 2$ each elementary abelian $p$-subgroup of the adjoint group lifts uniquely to an isomorphic copy in the simply connected group. In the table, the group $Z$ is the kernel of the isogeny $G_{\ssc} \to G$ from the simply connected cover $G_{\ssc}$ of $G$, that is, $Z=1$ if $G=G_{\ssc}$, and $Z=2$ if $G=G_{\ad}$  and $\ell \neq 2$.
\end{ithm}
\end{remark}

\subsection{Statement of results for $p=2$}\label{secNontoral2} 
We now consider elementary abelian $2$-subgroups. Again, by Proposition~\ref{prop:change_char} and Lemma~\ref{lem:change_char} we are able to use results concerning complex Lie groups and compact real Lie groups, in particular \cite{Griess} and \cite{Yu}. However, in this case complete information is not given, and we supplement existing results with arguments for $E_{7,\ssc}$, as well as our own normaliser structure calculations. Since the lists of subgroups for $E_8$ and $E_{7,\ad}$ are much larger than the other cases, we separate these out for readability. For the same reason, we have outsourced some more technical considerations to the appendix.

\begin{proposition}\label{prop:algsub2}
Let $G$ be a simple algebraic group of exceptional type over an algebraically closed field $K$ of characteristic $\ell \neq 2$. If $E\leq G$ is a non-toral elementary abelian $2$-subgroup, then $(G,E)$ appears in Table~\ref{tabsubs2_G2_F4_E6_E7sc}, Table~\ref{tabsubs2_E7adNEW}, or Table~\ref{tabsubs2_E8}. Each line of a table corresponds to a unique $G$-conjugacy class of subgroups; supplementing comments are given in Remark~\ref{remalgsub2}. 
\end{proposition}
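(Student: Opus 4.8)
The plan is to reduce to the complex (indeed compact) setting, import the existing classifications, and then pin down the local structure by direct computation. By Proposition~\ref{prop:change_char} and Lemma~\ref{lem:change_char}, the set of conjugacy classes of elementary abelian $2$-subgroups of $G$, together with the normaliser and centraliser data attached to each, does not depend on the characteristic $\ell\neq 2$; so we may take $K=\mathbb{C}$. By Lemma~\ref{lem:compact} we may then pass to a maximal compact subgroup $H\le G(\mathbb{C})$, inside which the conjugacy classification and the local structure are faithfully reflected (with the translations of parts (b),(c) of that lemma). For the types where $G$ is simultaneously simply connected and adjoint, namely $G_2$, $F_4$ and $E_8$, and for the adjoint groups of types $E_6$ and $E_7$, Yu's work \cite{Yu} gives a complete list of $H$-classes of elementary abelian $2$-subgroups of $H$. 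We intersect this list with the complement of the toral classes, which are produced together with their full local structure by the algorithm of Section~\ref{secCompToral}. To decide torality of a given $E$ we use Lemma~\ref{lem:istoral}(a) (that $E$ is toral iff $E\le C_G(E)^\circ$), or equivalently we compare $\dim C_G(E)$ — computed from the character of $L(G)$ as a $\mathbb{C}E$-module via Lemma~\ref{lem:centdim}(c), using the class data of Table~\ref{tabsmallelts} — with the dimension of a centraliser of a toral subgroup of the same order.

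Next comes the isogeny reduction. For $G$ of type $E_6$ the centre has order $3$, coprime to $2$, so Schur--Zassenhaus gives a unique lift (up to conjugacy) of each elementary abelian $2$-subgroup of $G_{\ad}$ to an isomorphic subgroup of $G_{\ssc}$, and the classification together with its local structure transfers verbatim; thus only the adjoint case needs treatment here. The one genuinely new analysis is $G=E_{7,\ssc}$, where $Z=Z(G_{\ssc})$ has order $2$. Here the preimage $\widetilde E$ of an elementary abelian $E\le G_{\ad}$ is a $2$-group with $\widetilde E/Z\cong E$ which need not split; since $Z$ has exponent $2$, squaring induces a well-defined quadratic map $E\to Z$, and $\widetilde E$ is elementary abelian exactly when this map vanishes. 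One determines case by case which of the $E_{7,\ad}$-classes in Table~\ref{tabsubs2_E7adNEW} lift to elementary abelian subgroups, how many $G_{\ssc}$-classes lie over each such class, and then transports the normaliser and centraliser data through the isogeny, using that $N_{G_{\ssc}}(\widetilde E)$ maps onto $N_{G_{\ad}}(E)$ with kernel $Z$ and that the two centralisers differ only by this central $Z$. Griess' classification \cite{Griess} of the maximal non-toral subgroups, available for all isogeny types, serves as a check on which subgroups occur.

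For each surviving non-toral $E$ we must then exhibit $C_G(E)^\circ$, the component group $C_G(E)/C_G(E)^\circ$ (a $2$-group by the last sentence of Lemma~\ref{lem:change_char}), and the finite quotient $N_G(E)/C_G(E)$, which embeds in $\Aut(E)=\GL_{n}(2)$ where $n$ is the rank of $E$. The connected centraliser is reductive; as in Section~\ref{secCompToral}, after conjugating $E$ into $N_G(T)$ its root datum is recovered by testing which root elements $x_\alpha(1)$ commute with $E$, and the isogeny types of the simple factors are then fixed by their action on composition factors of $L(G)$ (or of the minimal module), exactly as in the discussion preceding Table~\ref{tabsmallelts}. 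The component group and the normaliser quotient are read off from the $\tilde W$-computations of the toral reductions and from the ancillary data of \cite{Yu}, and the extension type of $N_G(E)$ is determined by its action on $E$; Lemma~\ref{lem:compact}(c) translates between $H$ and $G$ throughout. The bookkeeping is organised by the rank of $E$ and by the containments of non-toral subgroups in maximal ones, propagated along the chain $F_4<E_6<E_7<E_8$ using the class inclusions of Table~\ref{tabincl}; the larger lists for $E_8$ and $E_{7,\ad}$ are displayed separately in Tables~\ref{tabsubs2_E8} and~\ref{tabsubs2_E7adNEW}, with the more technical verifications deferred to Appendices~\ref{appYu} and~\ref{appYuE8}. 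In the course of this we also detect and correct several typographic errors in the ancillary data of \cite{Yu}.

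The main obstacle is not the existence of the classification — that is essentially contained in \cite{Yu,Griess} — but the extraction of precise, internally consistent local structure: getting the correct isogeny types of the simple factors of $C_G(E)^\circ$, the exact extension type and action in $N_G(E)$ (split versus non-split), and, for $E_{7,\ssc}$, handling the failure of Schur--Zassenhaus so as to count lifts correctly and transfer the normaliser data. Reconciling all of this with the partly erroneous published tables is the delicate part of the argument.
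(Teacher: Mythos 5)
Your high-level framework is the paper's: pass to characteristic $0$ via Proposition~\ref{prop:change_char} and Lemma~\ref{lem:change_char} (and to a compact form via Lemma~\ref{lem:compact}), take the lists of Griess and Yu, strike out the toral classes returned by the algorithm of Section~\ref{secCompToral} by comparing class distributions and centraliser dimensions (Lemma~\ref{lem:centdim}(c)), and reduce $E_6$ to a single isogeny type by Schur--Zassenhaus. But there is a genuine gap in how you propose to compute the local structure. You say $C_G(E)^\circ$ is recovered ``exactly as in Section~\ref{secCompToral}'' by conjugating $E$ into $N_G(T)$ and testing which root elements $x_\alpha(1)$ commute with $E$, and that the component group and normaliser quotient are read off from the $\tilde W$-computations. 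Both steps are only valid for \emph{toral} $E$: they rest on $T\le C_G(E)^\circ$ and on Proposition~\ref{PropAlgToral}, and these fail precisely for the subgroups this proposition is about (the non-toral $2^3$ in $F_4$ has $C_G(E)^\circ\cong\PGL_2(K)$ of rank $1$, and the paper states explicitly at the start of Section~\ref{secNontoralAlg} that the normaliser of a non-toral subgroup is not controlled by $N_G(T)$). The paper replaces this machinery by ad hoc arguments: locating $E$ inside known maximal connected subgroups ($A_1G_2<F_4$, $A_2G_2<E_6$, $G_2C_3<E_{7,\ssc}$), matching dimensions from Lemma~\ref{lem:centdim}(c), descending along chains $E=\langle U,u\rangle$ using the involution-centraliser tables of \cite{Gor}, and, for $E_8$, a specific correspondence $E=(2^3)_x\times U\leftrightarrow X=(2^2)_x\times U$ with non-toral subgroups of $E_{7,\ad}$ obtained through the centraliser $\SL_2(K)\circ_2 E_{7,\ssc}(K)$ together with Lemma~\ref{lemOutE8}. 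Some substitute for all of this is needed; without it the centraliser columns of the tables are not justified.

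Your treatment of $E_{7,\ssc}$ is the one place where you take a genuinely different and workable route. The paper works directly in the simply connected group: it takes the unique maximal non-toral $M=2^6$ from Griess, builds explicit generators adapted to $G_2(K)\times\Sp_6(K)$, realises $N_G(M)/C_G(M)$ as a concrete matrix group, and enumerates orbits on subgroups. Your descent from $E_{7,\ad}$ via the squaring map $q\colon\bar E\to Z$ gives a clean criterion --- the preimage is elementary abelian exactly when $\bar E$ is $2\BC$-pure, so only $F_0''$, $F_1''$, $F_2''$ of Table~\ref{tabsubs2_E7adNEW} contribute --- but note that the subgroups of $G_{\ssc}$ not containing $Z$ are the \emph{complements} to $Z$ in these preimages, not lifts of anything in $G_{\ad}$, and classifying them up to conjugacy amounts to computing $N_{G_{\ssc}}(\widetilde E)$-orbits on complements. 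That requires exactly the explicit normaliser action the paper constructs by hand, so your route presupposes the full $E_{7,\ad}$ answer and does not avoid the hard computation (though it does reproduce the count: $3$ preimages plus $2+3+3$ orbits of complements gives the $11$ classes).
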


\begin{remark}\label{remalgsub2} These remarks supplement Proposition \ref{prop:algsub2}.  \leavevmode
\begin{ithm}
\item For $G$ of type $E_6$, the classification is independent of the isogeny type, because the simply connected group has centre of order $3$ or $1$, and the same argument as in Remark \ref{remtabsubs35}(c) applies.
  In the table, the group $Z$ is the kernel of the isogeny $G_{\ssc} \to G$ from the simply connected cover $G_{\ssc}$ of $G$; thus, $Z=1$ if $G=G_{\ssc}$, and $Z=3$ for $G=G_{\ad}$  and $\ell \neq 3$.
	\item For $G$ adjoint of type $E_7$, the notation $2\BC$ denotes the unique conjugacy class which is the image of the classes $2\B$ and $2\C$. We abuse notation and write $4\A$ and $4\classH$ for the involutions coming from the corresponding classes in the simply connected cover of $G$.
	\item Since much of the information in Tables \ref{tabsubs2_E7adNEW} and \ref{tabsubs2_E8} is derived from \cite[Sections~2--8]{Yu}, in the third column we give the name used  in the classification there; we refer to Appendices \ref{appYuE7names} and \ref{appYuE8names} for further details. The notation for the  centralisers in Tables \ref{tabsubs2_E7adNEW} and \ref{tabsubs2_E8} is as in \cite{Gor}.
\end{ithm} 
\end{remark}
\enlargethispage{0.6cm}

The remainder of this section is devoted to a proof of Proposition \ref{prop:algsub2}. Using the algorithm of Section~\ref{secCompToral}, we find that the number of classes of toral elementary abelian $2$-subgroups is as given in Table \ref{tabNumbers}.

\begin{table}[htbp]
\small
\caption{Number of classes of toral elementary abelian $2$-subgroups, where the bracketed numbers count subgroups of order $2^{0}$, $2^{1}$, $2^2$, etc.} \label{tabNumbers} \setlength{\tabcolsep}{2pt}
\begin{tabular}{c|ccccc} 
$G$ & $G_2$ & $F_4$ & $E_6$ & $E_7$ & $E_8$ \\ \hline
Number & $(1,1,1)$ & $(1,2,3,2,1)$ & $(1,2,4,4,4,2,1)$ & $(1,3,5,7,7,5,3,1)$ & $(1,2,4,5,7,5,4,2,1)$ \\
Total & $3$ & $9$ & $18$ & $32$ & $31$
\end{tabular}
\end{table}

{ \small
\begin{table}[htbp]
\caption{Non-toral elementary abelian $2$-subgroups of algebraic groups $G_2$, $F_4$, $E_6$ and $E_{7,\ssc}$.}  \label{tabsubs2_G2_F4_E6_E7sc}
\begin{tabular}{ccccc}
$\pmb{G}$ & $\pmb{E}$ & $\pmb{\Dist(E)}$ & $\pmb{C_{G}(E)}$ & $\pmb{N_{G}(E)}$ \\ \hline \Bstrut
$G_{2}$ & $2^{3}$ &
			$2\A_{7}$ &
			$2^{3}$ & 
			$2^{3}.\SL_{3}(2)$ \\ \hline \Bstrut
$F_{4}$	& $2^{5}$ & 
			$2\A_{28}\B_{3}$ & 
			$2^{5}$ & 

			$2^{5}.(2^{2\cdot 3}).[\SL_{3}(2) \times \Sym_{3}]$ \\ \cdashline{2-5} \Bstrut
		& $2^{4}$ & 
			$2\A_{14}\B_{1}$ & 
			$2^{3} \times T_{1}.2$ & 
			$(2^{3} \times T_{1}.2).(2^{3}\mcolon \SL_{3}(2))$ \\ \cdashline{2-5} \Bstrut
		& $2^{3}$ & 
			$2\A_{7}$ & 
			$2^{3} \times \PGL_{2}(K)$ & 
			$(2^{3}\mcolon\SL_{3}(2)) \times \PGL_{2}(K)$ \\ \hline \Bstrut
$E_{6}$ & $2^{5}$ &  
			$2\A_{28}\B_{3}$ & 
			$2^{3} \times T_{2}$ & 
			{$(2^{3} \times T_{2}).(2^{2\cdot 3}).(\Sym_{3} \times \SL_{3}(2))$} \\ \cdashline{2-5} \Bstrut
		& $2^{4}$ &  
			$2\A_{14}\B_{1}$ & 
			$2^{3} \times \GL_{2}(K)$ & 
			{$(2^{3} \times \GL_{2}(K)).( 2^{3}\mcolon \SL_{3}(2))$} \\ \cdashline{2-5} \Bstrut
		& $2^{3}$ & 
			$2\A_{7}$ & 
			$2^{3} \times \SL_{3}(K)/Z$ & 
			$(2^{3}\mcolon\SL_{3}(2)) \times \SL_{3}(K)/Z$ \\ \hline \Bstrut 
$E_{7,\ssc}$
		& $2^{6}$ &
			$2\A_{1}\B_{31}\C_{31}$ & 
			$2^{3} \times \SL_{2}(K)^{3}$ & 
			$C_{G}(E).2^{2 \cdot 3}.(\SL_{3}(2) \times \Sym_{3})$\\ \cdashline{2-5} \Bstrut
		& $(2^{5})_{a}$ &
			$2\A_{1}\B_{15}\C_{15}$ & 
			$2^{3} \times \SL_{2}(K) \times \Sp_{4}(K)$ & 
			$C_{G}(E).(2^{3}\mcolon \SL_{3}(2))$ \\
		& $(2^{5})_{b}$ &
			$2\B_{28}\C_{3}$ & 
			$2^{3} \times \SL_{2}(K)^{3}$ & 
			$C_{G}(E).2^{2\cdot 3}.(\SL_{3}(2) \times \Sym_{3})$ \\
		& $(2^{5})_{c}$ &
			$2\B_{16}\C_{15}$ & 
			$2^{3} \times \SL_{2}(K)^{3}$ & 
			$C_{G}(E).(2 \times (2^{3}\mcolon \SL_{3}(2)))$\\
		& $(2^{5})_{d}$ &
			$2\B_{12}\C_{19}$ & 
			$2^{3} \times \SL_{2}(K)^{3}$ & 
			$C_{G}(E).(2^{2+6}\mcolon(\Sym_{3} \times \Sym_{3}))$\\ \cdashline{2-5} \Bstrut
		& $(2^{4})_{a}$ &
			$2\A_{1}\B_{7}\C_{7}$ & 
			$2^{3} \times \Sp_{6}(K)$ & 
			$(2^{3}\mcolon \SL_{3}(2)) \times \Sp_{6}(K)$ \\
		& $(2^{4})_{b}$ &
			$2\B_{14}\C_{1}$ & 
			$2^{3} \times \SL_{2}(K) \times \Sp_{4}(K)$ & 
			$C_{G}(E).(2^{3}\mcolon \SL_{3}(2))$\\
		& $(2^{4})_{c}$ &
			$2\B_{8}\C_{7}$ & 
			$2^{3} \times \SL_{2}(K) \times \Sp_{4}(K)$ & 
			$(2^{3}\mcolon \SL_{3}(2)) \times \SL_{2}(K) \times \Sp_{4}(K)$\\
		& $(2^{4})_{d}$ &
			$2\B_{6}\C_{9}$ & 
			$2^{3} \times \SL_{2}(K) \times \Sp_{4}(K)$ & 
			$C_{G}(E).(2_+^{1 + 4}\mcolon \Sym_{3})$ \\ \cdashline{2-5} \Bstrut
		& $(2^{3})_{a}$ &
			$2\B_{7}$ & 
			$2^{3} \times \Sp_{6}(K)$ & 
			$(2^{3}\mcolon \SL_{3}(2)) \times \Sp_{6}(K)$\\
		& $(2^{3})_{b}$ &
			$2\B_{3}\C_{4}$ & 
			$2^{3} \times \Sp_{6}(K)$ & 
			$(2^{3}\mcolon 2^{2}.\Sym_{3}) \times \Sp_{6}(K)$
\end{tabular}
\end{table}
}

{ \footnotesize
\begin{table}[htbp]  
\caption{Non-toral elementary abelian $2$-subgroups of $E_{7,{\ad}}$, see Table \ref{tabNormE7adNEW} for more details on the normaliser quotient; here $Z$ is the kernel of $E_{7,{\ssc}}\to E_{7,{\ad}}$. The notation $\leftrightarrow$ denotes an involution swapping two isomorphic simple subgroup factors. The subgroups listed in column labelled ``$U$'' are used in Case (2) of the proof of Proposition~\ref{propCent}.} \label{tabsubs2_E7adNEW} \setlength{\tabcolsep}{0pt}
\begin{tabular}{ccccccc}
$\pmb{E}$ & $\pmb{\Dist(E)}$ & {\bf Name} & $\dim$ & $\pmb{U}$ & $\pmb{C_{G}(E)}$ & $\pmb{N_{G}(E)/C_{G}(E)}$ \\ \hline 
$2^{8}$ & 
	$2\BC_{63}4\A_{164}4\classH_{28}$ & 
	$F_{0,1,0,2}$ & $0$ & $F_{0,1,1,1}$ &
	$2^{8}$ & 
	$2^{7}\mcolon \SO_{7}(2)$ \\
\hline 
$(2^{7})_{a}$ & 
	$2\BC_{31}4\A_{84}4\classH_{12}$ & 
	$F_{2,3}$ & $0$ & $F_{1,3}$ &
	$2^{7}$ & 
	$(2^{2 \cdot 2} \times 2^{2 \cdot 3})\mcolon(\Sym_{3} \times \Sym_{3} \times \SL_{3}(2))$\\
$(2^{7})_{b}$ & 
	$2\BC_{31}4\A_{84}4\classH_{12}$ & 
	$F_{0,1,1,1}$ & $0$ & $F_{0,1,0,1}$ &
        $2^8$ & 
	$(2^6 \mcolon 2^{5}) \mcolon \SO_{5}(2)$\\
$(2^{7})_{c}$ &  
	$2\BC_{31}4\A_{80}4\classH_{16}$ & 
	$F_{1,0,0,2}$ & $1$ & $F_{1,2}''$ &
	  $2^6\times T_1.i$ & 
	  $(2^6 \mcolon 2^{4}) \mcolon \SO_{5}(2)$\\
$(2^{7})_{d}$ & 
	$2\BC_{63}4\A_{64}$ & 
	$F_{0,3}''$ & $0$ & $F_{1,2}''$ &
        $2^8$ & 
	$2^{6}\mcolon \Sp_{6}(2)$ \\
\hline 
$(2^{6})_{a}$ & 
	$2\BC_{15}4\A_{42}4\classH_{6}$ & 
	$F_{1,3}$ & $1$ & $F_{0,3}$ &
	$2^5\times T_1.i$ & 
	$(2^{2} \times 2^{3}) \mcolon (\Sym_{3} \times \SL_{3}(2)) $\\
$(2^{6})_{b}$ & 
	$2\BC_{15}4\A_{36}4\classH_{12}$ & 
	$F_{2,2}$ & $4$ & $F_{2,1}$ &
	$2^2\times T_4.2$ & 
	$(2^{2 \cdot 2} \times 2^{2 \cdot 2}) \mcolon (\Sym_{3} \times \Sym_{3} \times \Sym_{3})$ \\
$(2^{6})_{c}$ & 
	$2\BC_{31}4\A_{28}4\classH_{4}$ & 
	$F_{2,3}'$ & $3$ & $F_{1,3}'$ &
	  $2^3\times T_3.2$ & 
	$(2^{2 \cdot 1} \times 2^{2 \cdot 3}) \mcolon (\Sym_{3} \times \SL_{3}(2))$\\
$(2^{6})_{d}$ & 
	$2\BC_{15}4\A_{38}4\classH_{10}$ & 
	$F_{0,0,0,2}$ & $3$ & $F_{0,2}''$ &
        $2^6\times\PSL_2(K) $& 
	$2^{5} \mcolon \SO_{5}(2)$ \\
$(2^{6})_{e}$ & 
	$2\BC_{15}4\A_{42}4\classH_{6}$ & 
	$F_{0,1,0,1}$ & $1$ & $F_{0,2}''$ &
	  $2^6\times (T_1.i)$ & 
	$2^{5} \mcolon \SO_{5}(2)$ \\
$(2^{6})_{f}$ & 
	$2\BC_{15}4\A_{44}4\classH_{4}$ & 
	$F_{0,1,2,0}$ & $0$ & $F_{1,0,2,0}$ &
	  $2^2\times 2^4.2^3$ & 
	 $(2^{5} \mcolon 2^{2 \cdot 3}) \mcolon (\Sym_{3} \times \Sym_{3})$\\
$(2^{6})_{g}$ & 
	$2\BC_{15}4\A_{40}4\classH_{8}$ & 
	$F_{1,0,1,1}$ & $2$ & $F_{0,0,1,1}$ &
         $2^4\times (T_2.2^2)$  & 
	 $(2^5 \mcolon 2^{4}) \mcolon (2^{2} \mcolon \Sym_{3})$ \\
$(2^{6})_{h}$ & 
	$2\BC_{31}4\A_{32}$ & 
	$F_{1,2}''$ & $1$ & $F_{0,2}''$ &
          $2^6\times (T_1.i)$ & 
	 $(2^5 \mcolon 2^{4}) \mcolon \Sp_{4}(2)$ \\
\hline 
$(2^{5})_{a}$ & 
	$2\BC_{7}4\A_{18}4\classH_{6}$ & 
	$F_{1,2}$ &  $6$ & $F_{1,1}$ &
	 $ 2^2\times (T_3\circ_2\SL_2(K)).(i{:}1)$ & 
	$(2^{2} \times 2^{2}) \mcolon (\Sym_{3} \times \Sym_{3})$ \\
$(2^{5})_{b}$ & 
	$2\BC_{7}4\A_{21}4\classH_{3}$ & 
	$F_{0,3}$ &  $3$ & $F_{0,2}$ &
        $2^5\times {\rm PGL}_2(K)$& 
	$\Sym_{3} \times \SL_{3}(2)$ \\
$(2^{5})_{c}$ &  
	$2\BC_{7}4\A_{12}4\classH_{12}$ & 
	$F_{2,1}$ &  $12$ & $F_{2,0}$ &
	$2^2\times (\Spin_4(K)\circ_{2}\Spin_4(K))$  & 
	$(2^{2 \cdot 2} \times 2^{2}) \mcolon (\Sym_{3} \times \Sym_{3})$ \\
$(2^{5})_{d}$ & 
	$2\BC_{15}4\A_{14}4\classH_{2}$ & 
	$F_{1,3}'$ & $5$ & $F_{0,3}'$ &
	 $2^3\times (T_2\circ_2\SL_2(K)).\gamma$ &  
	$(2 \times 2^{3}) \mcolon (\SL_{3}(2))$ \\
$(2^{5})_{e}$ & 
	$2\BC_{7}4\A_{18}4\classH_{6}$ & 
	$F_{0,0,1,1}$ & $6$ & $F_{0,0,0,1}$ &
	 $2^4\times (\SL_2(K)\circ_2\SL_2(K)).\leftrightarrow$ & 
	 $(2^4 \mcolon 2^{3}) \mcolon \Sym_{3}$ \\
$(2^{5})_{f}$ & 
	$2\BC_{7}4\A_{20}4\classH_{4}$ & 
	$F_{1,0,2,0}$ & $4$ & $F_{2}'$ &
         $2^2\times T_4.2^3$ &  
	 $(2^{4}\mcolon 2^{2\cdot 2}) \mcolon \Sym_{3}$ \\
$(2^{5})_{g}$ & 
	$2\BC_{7}4\A_{20}4\classH_{4}$ & 
	$F_{1,0,0,1}$ & $4$ & $F_{0,0,0,1}$ &
        $2^4\times (\GL_2(K)/2).\gamma$ &  
	$(2^4 \mcolon 2^{2}) \mcolon \Sym_{3}$ \\
$(2^{5})_{h}$ & 
	$2\BC_{7}4\A_{22}4\classH_{2}$ & 
	$F_{0,1,1,0}$ &  $2$ & $F_{0,1,0,0}$ &
        $2^4\times T_2.D_8$ & 
	 $(2^4\mcolon 2^3) \mcolon \Sym_{3}$ \\
$(2^{5})_{i}$ & 
	$2\BC_{15}4\A_{16}$ & 
	$F_{2,1}''$ & $3$ & $F_{1,1}''$ &
          $2^3\times T_3.2^3$ &   
	 $(2^4\mcolon2^{2\cdot 2}) \mcolon (\Sym_{3} \times \Sym_{3})$ \\
$(2^{5})_{j}$ & 
	$2\BC_{15}4\A_{16}$ & 
	$F_{0,2}''$ & $3$ & $F_{1,1}''$ &
	 $2^6\times \PSL_2(K)$ & 
	$2^4 \mcolon \Sp_{4}(2)$ \\
$(2^{5})_{k}$ & 
	$2\BC_{7}4\A_{24}$ & 
	$F_{3}'$ &  $0$ & $F_{2}'$ &
	 $2^2\times 2^{3+6}$ &
	$2^{3 \cdot 2} \mcolon (\SL_{3}(2) \times \Sym_{3})$ \\
$(2^{5})_{l}$ &
	$2\BC_{31}$ & 
	$F_{2}''$ &  $9$ & $F_{1}''$ &
         $2^3\times \SL_2(K)\circ_2(\SL_2(K))^2$ &
	$2^{2 \cdot 3} \mcolon (\Sym_{3} \times \SL_{3}(2))$ \\ 
\hline 
$(2^{4})_{a}$ & 
	$2\BC_{3}4\A_{9}4\classH_{3}$ & 
	$F_{0,2}$ & $10$ & $F_{0,1}$ &
	 $2^2\times (T_2\circ_3 \SL_3(K)).(i{:}\gamma)$ & 
	$\Sym_{3} \times \Sym_{3}$ \\
$(2^{4})_{b}$ & 
	$2\BC_{3}4\A_{6}4\classH_{6}$ & 
	$F_{1,1}$ & $16$ & $F_{0,1}$ &
	 $2^2\times (\SL_2(K)^2\circ_2\Sp_4(K))$ & 
	$(2^{2} \times 2) \mcolon \Sym_{3}$ \\
$(2^{4})_{c}$ & 
	$2\BC_{3}4\classH_{12}$ & 
	$F_{2,0}$ & $28$ & $F_{1,0}$ &
	$2^2\times \Spin_8(K)$ & 
	$2^{2 \cdot 2} \mcolon (\Sym_{3} \times \Sym_{3})$ \\
$(2^{4})_{d}$ & 
	$2\BC_{7}4\A_{7}4\classH_{1}$ & 
	$F_{0,3}'$ & $9$ & $F_{0}''$ &
	$2^3\times (\GL_3(K)/2).\gamma$ &
	$\SL_{3}(2)$ \\
$(2^{4})_{e}$ & 
	$2\BC_{3}4\A_{8}4\classH_{4}$ & 
	$F_{0,0,2,0}$ & $12$ & $F_{0,0,1,0}$ &
        $2^2\times ((\SL_2(K))^2\circ_2(\SL_2(K))^2).2^2$ & 
	$(2^3\mcolon 2^2)\mcolon\Sym_3$\\
$(2^{4})_{f}$ & 
	$2\BC_{3}4\A_{9}4\classH_{3}$ & 
	$F_{0,0,0,1}$ & $10$ & $F_{0,1}''$ &
        $2^4\times \PSp_4(K)$ & 
	$2^{3} \mcolon \Sym_{3}$ \\
$(2^{4})_{g}$ & 
	$2\BC_{3}4\A_{11}4\classH_{1}$ & 
	$F_{0,1,0,0}$ & $6$ & $F_{0,1}''$ &
	  $2^4\times (\PSL_2(K))^2.\leftrightarrow$ &
	$2^{3} \mcolon \Sym_{3}$ \\
$(2^{4})_{h}$ & 
	$2\BC_{3}4\A_{10}4\classH_{2}$ & 
	$F_{1,0,1,0}$ & $8$ & $F_{1,0,0,0}$ &
        $2^2\times (\GL_2(K)\circ_2\GL_2(K)).2^2$ & 
	 $2^3\mcolon 2^2$ \\
$(2^{4})_{i}$ & 
	$2\BC_{7}4\A_{8}$ & 
	$F_{1,1}''$ & $7$ & $F_{0,1}''$ &
	 $2^3\times ((T_1.i)\circ_2\SO_4(K)).(i{:}\gamma)$  &
	 $(2^3\mcolon 2^2)\mcolon\Sym_3$  \\
$(2^{4})_{j}$ & 
	$2\BC_{3}4\A_{12}$ & 
	$F_{2}'$ & $4$ & $F_{1}'$ &
         $2^2\times T_4.2_+^{1+4}$  & 
	$2^{2 \cdot 2} \mcolon (\Sym_{3} \times \Sym_{3})$ \\
$(2^{4})_{k}$ & 
	$2\BC_{15}$ & 
	$F_{1}''$ & $13$ & $F_{0}''$ &
	  $2^3\times \SL_2(K)\circ_2\Sp_4(K)$ & 
	$2^{3} \mcolon \SL_{3}(2)$ \\
\hline 
$(2^{3})_{a}$ & 
	$2\BC_{1}4\A_{3}4\classH_{3}$ & 
	$F_{0,1}$ & $24$ & $F_{0,0,0,0}$ &
	$2^2\times \SL_2(K)\circ_2\Sp_6(K)$&  
	$\Sym_{3}$ \\
$(2^{3})_{b}$ & 
	$2\BC_{1}4\classH_{6}$ & 
	$F_{1,0}$ & $36$ & $F_{0,0}$ &
	$2^2\times \Spin_9(K)$ & 
	$2^{2} \mcolon \Sym_{3}$ \\
$(2^{3})_{c}$ & 
	$2\BC_{1}4\A_{5}4\classH_{1}$ & 
	$F_{1,0,0,0}$ & $16$ & $F_{0,0,0,0}$ &
	  $2^2\times (\GL_4(K)/Z).\gamma$ &  
	$2^{2}$ \\
$(2^{3})_{d}$ & 
	$2\BC_{1}4\A_{4}4\classH_{2}$ & 
	$F_{0,0,1,0}$ & $20$ & $F_{0,0,0,0}$ &
        $2^2\times (\Sp_4(K)\circ_2\Sp_4(K)).\leftrightarrow$  & 
        $2^2\mcolon 2$\\
$(2^{3})_{e}$ & 
	$2\BC_{3}4\A_{4}$ & 
	$F_{0,1}''$ & $15$ & $\null$ &
	$2^3\times \PSO_6(K).\gamma$ & 
	$2^{2} \mcolon \Sym_{3}$ \\
$(2^{3})_{f}$ & 
	$2\BC_{1}4\A_{6}$ & 
	$F_{1}'$ & $12$ & $F_0'$ &
	 $2^2\times (\SO_4(K)\circ_2\SO_4(K)).\langle \gamma{:}\gamma,\leftrightarrow\rangle $ & 
	$2^{2} \mcolon \Sym_{3}$ \\
$(2^{3})_{g}$ & 
	$2\BC_{7}$ & 
	$F_{0}''$ & $21$ & $\null$ &
	$2^3\times \text{PSp}_6(K)$ & 
	$\SL_{3}(2)$ \\
\hline 
$(2^{2})_{a}$ &
	$4\classH_{3}$ &
	$F_{0,0}$ & $52$ & $\null$ &
	$2^{2} \times F_{4}(K)$ &
	$\Sym_{3}$ \\
$(2^{2})_{b}$ &
	$4\A_{2}\classH_{1}$ &
	$F_{0,0,0,0}$ & $36$ & $\null$ &
	$2^{2} \times \PSp_{8}(K)$ &
	$2$ \\
$(2^{2})_{c}$ & 
	$4\A_{3}$ & 
	$F_{0}'$ & $28$ & $\null$ &
	$2^{2} \times \POmega_{8}(K)$ & 
	$\Sym_{3}$
\end{tabular}
\end{table}
}

{\footnotesize 
\begin{table}[htbp]
\caption{Non-toral elementary abelian $2$-subgroups of the algebraic group $G=E_{8}$; here $L=E_{7,\ad}\leq G$ and the subgroup given in the column labelled ``$X\leq L$'' is from Table~\ref{tabsubs2_E7adNEW} and used in Section~\ref{secProofE8}.} \label{tabsubs2_E8}
\hspace*{-7ex}\begin{tabular}{ccccccc}
$\pmb{E}$ & $\pmb{\rm {Dist}(E)}$ & {\bf Name} & $\dim$ & $X\leq L$ & $\pmb{C_{G}(E)}$ & $\pmb{N_{G}(E)/C_{G}(E)}$ \\ \hline 
$2^{9}$ & 
	$2\A_{120}\B_{391}$ & 
	$F_{0,1,0,2}$ & $0$ & $F_{0,1,0,2}$ &
	$2^{9}$ & 
	$2^{8}\mcolon \SO_{8}^{+}(2)$ \\
\hline 
$(2^{8})_{a}$ & 
	$2\A_{56}\B_{199}$ & 
        $F_{2,3}$ &
        $0$ & $F_{2,3}$ &
	$2^{8}$ & 
	$2^{2 \cdot 6} \mcolon ( \Sym_{3} \times (\SL_{3}(2) \wr 2))$ \\
$(2^{8})_{b}$ & 
	$2\A_{56}\B_{199}$ & 
	$F_{0,1,1,1}$ & 
	$0$ & $F_{0,1,1,1}$  & $2^9$ &
	$(2 \times 2^{6})\mcolon(2^{6}\mcolon(\SO_{6}^{+}(2) ))$ \\
$(2^{8})_{c}$ & 
	$2\A_{64}\B_{191}$ & 
	$F_{1,0,0,2}$ & 
	$1$ & $F_{1,0,0,2}$ & $2^7\times T_1.2$ &
	$2^{7}\mcolon \SO_{7}(2)$ \\
\hline 
$(2^{7})_{a}$ & 
	$2\A_{40}\B_{87}$ & 
	$F_{2,2}$ & 
	$4$ & $F_{2,2}$ & $2^3\times T_4.2$ &
	$(2^{2 \cdot 2 \cdot 1} \times 2^{2 \cdot 1 \cdot 3}) \mcolon ( \Sym_{3} \times \Sym_{3} \times \SL_{3}(2) )$ \\
$(2^{7})_{b}$ & 
	$2\A_{28}\B_{99}$ &  
	$F_{1,3}$ & 
        $1$&  $F_{1,3}$ & $2^6\times T_1.2$ &
	$2^{6} \mcolon (\SL_{3}(2) \wr 2)$ \\
$(2^{7})_{c}$ & 
	$2\A_{24}\B_{103}$ & 
	$F_{0,1,2,0}$ & 
	$0$ & $F_{0,1,2,0}$ & $2^3\times 2^4.2^3$ &
	$(2^{2} \times 2^{4})\mcolon(2^{2 \cdot 4}\mcolon( \Sym_{3} \times \SO_{4}^{+}(2) ))$ \\
$(2^{7})_{d}$ & 
	$2\A_{28}\B_{99}$ & 
	$F_{0,1,0,1}$ & 
	$1$  & $F_{0,1,0,1}$ & $2^7\times T_1.2$ &
	$2^{6}\mcolon \SO_{6}^{+}(2)$ \\
$(2^{7})_{e}$ & 
	$2\A_{32}\B_{95}$ & 
	$F_{1,0,1,1}$ & 
	$2$ & $F_{1,0,1,1}$ & $2^5\times T_2.2^2$ &
	$(2 \times 2^{5})\mcolon(2^{5}\mcolon \SO_{5}(2) )$ \\
$(2^{7})_{f}$ & 
	$2\A_{36}\B_{91}$ & 
	$F_{0,0,0,2}$ & 
	$3$ & {$F_{0,0,0,2}$ } & $2^7\times {\rm PSL}_2(K)$ &
	$2^{6}\mcolon \SO_{6}^{-}(2)$ \\
\hline 
$(2^{6})_{a}$ & 
	$2\A_{32}\B_{31}$ & 
	$F_{2,1}$ & 
        $12$ & $F_{2,1}$ & ${2^3}\times (\SL_2(K)^2)\circ_2 (\SL_2(K)^2)$ &
	$(2^{2 \cdot 1} \times 2^{2 \cdot 3}) \mcolon ( \Sym_{3} \times \SL_{3}(2) )$ \\
$(2^{6})_{b}$ & 
	$2\A_{20}\B_{43}$ & 
	$F_{1,2}$ & 
	$6$ & $F_{1,2}$  & $2^3\times (T_3\circ_2 \SL_2(K)).2$ &
	$(2^{2 \cdot 1} \times 2^{1 \cdot 3}) \mcolon (\Sym_{3} \times \SL_{3}(2) )$ \\
$(2^{6})_{c}$ & 
	$2\A_{14}\B_{49}$ &  
	$F_{0,3}$ & 
	$3$ &  $F_{0,3}$ &  $2^6\times {\rm PGL}_2(K)$ &
	$\SL_{3}(2) \wr 2$ \\
$(2^{6})_{d}$ & 
	$2\A_{12}\B_{51}$ & 
	$F_{0,1,1,0}$ & 
	$2$ & $F_{0,1,1,0}$ & $2^5\times T_2.D_8$ &
	$(2 \times 2^{4})\mcolon(2^{4}\mcolon(\SO_{4}^{+}(2) ))$ \\
$(2^{6})_{e}$ & 
	$2\A_{16}\B_{47}$ & 
	$F_{1,0,2,0}$ & 
	$4$  & $F_{1,0,2,0}$  & $2^3\times T_4.2^3$ &
	$(2^{2} \times 2^{3})\mcolon(2^{2 \cdot 3}\mcolon( \Sym_{3} \times \SO_{3}(2) ))$ \\
$(2^{6})_{f}$ & 
	$2\A_{16}\B_{47}$ & 
	$F_{1,0,0,1}$ & 
	$4$ &   $F_{1,0,0,1}$  & {$2^5\times ({\rm GL}_2(K)/2).2$} &
	$2^{5}\mcolon \SO_{5}(2)$ \\
$(2^{6})_{g}$ & 
	$2\A_{20}\B_{43}$ & 
	$F_{0,0,1,1}$ & 
	$6$ &  $F_{0,0,1,1}$  & $2^5\times (\SL_2(K)\circ_2\SL_2(K)).2$ &
	$(2 \times 2^{4})\mcolon(2^{4}\mcolon(\SO_{4}^{-}(2) ))$ \\
$(2^{6})_{h}$ & 
	$2\A_{8}\B_{55}$ & 
	$F_{3,2}''$ & 
	$0$ & $F_{3}'$ & $2^3\times 2^{3+6}$ &
	$2^{3 \cdot 2}\mcolon( (2^{3} \times 2^{2}) \mcolon (\SL_{3}(2) \times \Sym_{3}))$ \\
\hline 
$(2^{5})_{a}$ & 
	$2\A_{28}\B_{3}$ & 
	$F_{2,0}$ & 
	$28$ & $F_{2,0}$ & $2^3\times \Spin_8(K)$ &
	$2^{2 \cdot 3} \mcolon ( \Sym_{3} \times \SL_{3}(2) )$ \\
$(2^{5})_{b}$ & 
	$2\A_{16}\B_{15}$ & 
	$F_{1,1}$ & 
	$16$ & $F_{1,1}$ &$2^3\times (\SL_2(K)^2\circ_2 \Sp_4(K))$ &
	$2 \times 2^{3} \mcolon \SL_{3}(2)$ \\
$(2^{5})_{c}$ & 
	$2\A_{10}\B_{21}$ & 
	$F_{0,2}$ & 
        $10$ & $F_{0,2}$ & $2^3\times (T_1\times \GL_3(K)).2$ &
	$\Sym_{3} \times \SL_{3}(2)$ \\
$(2^{5})_{d}$ & 
	$2\A_{6}\B_{25}$ & 
	$F_{0,1,0,0}$ & 
	$6$ & $F_{0,1,0,0}$ & $2^5\times {{\rm PSL}_2(K)^2}.2$ &
	$2^{4}\mcolon \SO_{4}^{+}(2)$ \\
$(2^{5})_{e}$ & 
	$2\A_{8}\B_{23}$ & 
	$F_{1,0,1,0}$ & 
	$8$& $F_{1,0,1,0}$ & $2^3\times (T_2\circ_{2^2} (\SL_2(K)^2)).2^2$ &
	$(2 \times 2^{3})\mcolon(2^{3}\mcolon( \SO_{3}(2) ))$ \\
$(2^{5})_{f}$ & 
	$2\A_{12}\B_{19}$ & 
	$F_{0,0,2,0}$ & 
	$12$ & $F_{0,0,2,0}$ & $2^3\times ((\SL_2(K)^2)\circ_{2} (\SL_2(K)^2)).2^2$ &
	$(2^{2} \times 2^{2})\mcolon(2^{2 \cdot 2}\mcolon( \Sym_{3} \times \Sym_{3}))$ \\
$(2^{5})_{g}$ & 
	$2\A_{10}\B_{21}$  & 
	$F_{0,0,0,1}$ &  
	$10$ & $F_{0,0,0,1}$ & $2^5\times {\rm PSp}_4(K)$ &
	$2^{4}\mcolon(\SO_{4}^{-}(2))$ \\
$(2^{5})_{h}$ & 
	$2\A_{4}\B_{27}$ & 
	$F_{2,2}''$ & 
	$4$ & $F_{2}'$ & ${2^3}\times T_4.2_+^{1+4}$ &
	$2^{2 \cdot 2}\mcolon( (2^{2} \times 2^{2}) \mcolon (\Sym_{3} \times \Sym_{3}))$ \\
$(2^{5})_{i}$ & 
	$2\B_{31}$ & 
	$F_{5}'$ & 
        $0$ & $\null$ & $2^{5+10}$ &
	$\SL_{5}(2)$ \\
\hline 
$(2^{4})_{a}$ & 
	$2\A_{14}\B_{1}$ & 
	$F_{1,0}$ & 
	$36$ & $F_{1,0}$ & $2^3\times \Spin_9(K)$ &
	$2^{3} \mcolon \SL_{3}(2)$ \\
$(2^{4})_{b}$ & 
	$2\A_{8}\B_{7}$ & 
	$F_{0,1}$ & 
	$24$ & $F_{0,1}$ & $2^3\times \SL_2(K)\circ_2 \Sp_6(K)$ &
	$\SL_{3}(2)$ \\
$(2^{4})_{c}$ & 
	$2\A_{4}\B_{11}$ & 
	$F_{1,0,0,0}$ & 
	$16$ & $F_{1,0,0,0}$ &  $2^3\times (\GL_4(K)/2).2$ &
	$2^{3}\mcolon \SO_{3}(2) $ \\
$(2^{4})_{d}$ &  
	$2\A_{6}\B_{9}$ & 
	$F_{0,0,1,0}$ & 
        $20$ & $F_{0,0,1,0}$ &  $2^3\times (\Sp_4(K)\circ_2\Sp_4(K)).2$ &
	$2^{3}\mcolon(2^{2}\mcolon\Sym_{3})$ \\
$(2^{4})_{e}$ & 
	$2\A_{2}\B_{13}$ & 
	$F_{1,2}''$ & 
	$12$ & $F_{1}'$ & $2^3\times (\SO_4(K)\circ_2\SO_4(K)).2^2$ &
	$2^{1 \cdot 2}\mcolon( (2 \times 2^{2}) \mcolon \Sym_{3})$ \\
\hline 
$(2^{3})_{a}$ & 
	$2\A_{7}$ & 
	$F_{0,0}$ & 
	$52$ & $F_{0,0}$ & $2^3\times F_4(K)$ &
	$\SL_{3}(2)$ \\
$(2^{3})_{b}$ & 
	$2\A_{3}\B_{4}$ & 
	$F_{0,0,0,0}$ & 
	$36$ & $F_{0,0,0,0}$ & $2^3\times {\rm P}\Sp_8(K)$ &
	$2^{2}\mcolon \Sym_{3}$ \\
$(2^{3})_{c}$ & 
	$2\A_{1}\B_{6}$ & 
	$F_{0,2}''$ & 
	$28$ & $F_{0}'$ & $2^3\times {\rm P}\Omega_8(K)$ &
	$2^{2} \mcolon \Sym_{3}$ \\
\end{tabular}
\end{table}
}

It follows from our description in Section \ref{secCompToral} that the number of toral subgroups does not depend on the isogeny type (although the class distributions and normaliser structure of the subgroups do vary). On the other hand, \cite[Theorem~1.1]{Yu} tells us that for a group $G$ of type $G_2$, $F_4$, $E_6$, $E_7$, and $E_8$ we respectively have exactly $4$, $12$, $51$, $78$, and $66$ classes of elementary abelian $2$-subgroups in $\Aut(\Lie(G))$. If $G$ is adjoint, then $G = \Aut(\Lie(G))$, unless $G$ has type $E_6$ in which case (by inspecting \cite[\S 3.4]{Yu}) we see that $G = \Aut(\Lie(G))^{\circ}$ contains $21$ classes of elementary abelian $2$-subgroups; using the notation of \cite{Yu}, these are the classes in `Class 3' and `Class 4'.

Thus for adjoint groups of type $G_{2}$, $F_{4}$, $E_{6}$, $E_{7}$, and $E_{8}$ there are respectively $1$, $3$, $3$, $46$, and $35$ classes of non-toral elementary abelian $2$-subgroups. So if we verify the information stated in the tables, it becomes evident that all the listed subgroups are non-conjugate, and counting then shows that these are all non-toral subgroups with elementary abelian image in $G/Z(G)$. For the simply connected group $E_{7,\ssc}$, which is not isomorphic to its adjoint version as the characteristic is not $2$, we provide our own detailed calculations, showing explicitly that the $11$ given classes of subgroups above constitute all non-toral subgroups.
 
We now split the proof into several cases.

\subsection{Proof of Proposition \ref{prop:algsub2} for $G=G_{2}(K)$.} This follows from \cite[Table I]{Griess}.

\subsection{Proof of Proposition \ref{prop:algsub2} for $G=F_{4}(K)$.} 
The collection of all elementary abelian $2$-subgroups of $G$ is described in \cite[Proposition 5.2]{Yu}. These twelve subgroups are labelled $F_{r,s}$ for $r \le 2$ and $s \le 3$, and it follows directly from the proof of [loc.\ cit.]~that the class distribution of $F_{r,s}$ is $2\A_{2^{r + s} - 2^{r} - 2}\B_{2^{r} - 1}$. Using our algorithm from Section~\ref{secCompToral}, we find that $9$ of these $12$ possible class distributions are the distribution of a toral elementary abelian $2$-subgroup; the exceptions are $2\A_{28}\B_{3} = \Dist(F_{2,3})$, $2\A_{14}\B_{1} = \Dist(F_{1,3})$, and $2\A_{7} = \Dist(F_{0,3})$. We conclude that $G$ has exactly three conjugacy classes of non-toral subgroups (cf.\ also \cite[Table I and Theorem 7.3]{Griess}).

The structure of $N_{G}(F_{r,s})/C_{G}(F_{r,s})$ for each subgroup occurring is given in \cite[Proposition 5.5]{Yu}; it remains to consider their centralisers. Using Lemma~\ref{lem:centdim}, the subgroups of order $2^{3}$, $2^{4}$, and $2^{5}$ have centralisers of dimension $3$, $1$, and $0$, respectively. By  \cite[Theorem 1]{LSpositive}, the group $G$ has a maximal connected subgroup  of type $A_{1}G_{2}$, where the factor $A_1$ is adjoint as this maximal subgroup is not contained in an involution centraliser in~$G$. The non-toral subgroup $E=2^{3}$ of this $G_2$ factor, having all involutions conjugate, must be non-toral in $G$. Thus $E$ is a representative of the $G$-class of non-toral subgroups $2^{3}$. Now $E$  centralises a subgroup $A_{1}$, and we deduce that $C_{G}(E)^{\circ} \cong \PGL_{2}(K)$. Moreover, $C_{G}(E)$ normalises $C_{G}(E)^{\circ}$; since the latter has no outer automorphisms,  $C_{G}(E) \le \PGL_{2}(K)C_{G}(\PGL_{2}(K))$. From the maximality of the subgroup of type $A_{1}G_{2}$ it follows that $C_{G}(\PGL_{2}(K)) = G_{2}(K)$, hence $C_{G}(E) \le \PGL_{2}(K) G_{2}(K)$. Since $E < G_{2}(K)$ is the unique non-toral subgroup and $C_{G_2(K)}(E) = E$, we have $C_{G}(E) = E \times \PGL_{2}(K)$.

If $E$ is non-toral of order $2^4$, then $C_{G}(E)$ is the centraliser of a non-central involution of $2^{3} \times \PGL_{2}(K)$, and is therefore $2^{3} \times (T_{1}\mcolon 2)$. Finally, when $E$ is non-toral of order $2^{5}$ we have $E \le C_{G}(E) \le C_{2^{3} \times (T_{1}\mcolon 2)}(E) = 2^{3} \times (2^{2}) = E$.

\subsection{Proof of Proposition \ref{prop:algsub2} for $G=E_{6}(K)$.} Recall that $Z$ is the kernel of the isogeny from the simply connected cover of $G$. From \cite[Table I]{Griess} it follows that, up to conjugacy, the three non-toral subgroups of the simply connected group of type $E_{6}$ are precisely the images of the non-toral subgroups of $F_{4}(K)$ under the inclusion $F_{4}(K) \to E_{6,\ssc}(K)$: in the terminology of \cite{Griess}, this is because these are each precisely the subgroups of complexity at least 3. Their class distributions in Table~\ref{tabsubs2_G2_F4_E6_E7sc} follow from their distributions in $F_{4}(K)$. Using Lemma~\ref{lem:centdim}, the centralisers of $E = 2^{3}$, $2^{4}$, and $2^{5}$ have dimension $8$, $4$, and $2$, respectively. By \cite[Remark~8.3]{Griess}, the group $G$ has a maximal subgroup $M$ of type $A_2G_2$. The unique non-toral $2^3\leq G_2$ is non-toral in $G$ (since all its involutions are conjugate), and extends to a non-toral $2^5\leq G$ contained in $M$. The known dimension tells us that   $C_{G}(2^3)^{\circ}$ is precisely the factor of type $A_2$. We claim that its centre is $Z(G_{\ssc})/Z$. It follows from \cite[Tables 10.1 and 10.2]{LSpositive} that this factor $A_{2}$ has a natural $3$-dimensional module (highest weight `$10$') as a composition factor of the $27$-dimensional module, hence this factor $A_{2}$ has a non-trivial centre when $G$ is simply connected;  on the other hand, all composition factors of this subgroup $A_2$ on $L(G)$ have highest weight `$11$' or `$00$', hence occur as sub-quotients of $10 \otimes 01$. Thus the centre of this subgroup $A_2$ acts trivially on $L(G)$, and so lies in $\ker(G \to G_{\ad}) = Z(G_{\ssc})/Z$. Since $M$ is maximal, the normaliser of $C_{G}(2^{3})^{\circ}$ in $G$ is $M$, hence $C_{G}(2^3)\leq M$. Since $M$ is a direct product and $2^{3} < G_{2}$, we have \[C_{G}(2^3) = \SL_{3}(K)/Z \times C_{G_2(K)}(2^3) = 2^{3} \times \SL_{3}(K)/Z.\]  Since $C_G(2^5)\leq C_G(2^4)\leq C_G(2^3)=2^3\times \SL_3(K)/Z$, we observe that $C_{G}(2^4)^{\circ}$ is a $4$-dimensional reductive subgroup of $\SL_{3}(K)/Z$, and is therefore a Levi subgroup isomorphic to $\GL_{2}(K)$. The centraliser $C_{G}(2^4)$  normalises this $\GL_{2}(K)$, hence is equal to $2^{3} \times \GL_{2}(K)$. Next, $C_{G}(2^5)^{\circ}$ is reductive and $2$-dimensional, hence is a subtorus of the subgroup $\GL_{2}(K)$ above. The centraliser $C_{G}(2^5)$ is contained in $2^{3} \times \GL_{2}(K)$ and normalises this torus, hence is the product of $2^{3}$ with an elementary abelian subgroup of $\GL_{2}(K)$. Every such subgroup is toral in $\GL_{2}(K)$ by \cite[Corollary 14.17]{mt}, hence $C_{G}(2^5) = 2^{3} \times T_{2}$.

Finally, the groups $N_{G}(E)/C_{G}(E)$ are given in \cite[Proposition 6.10]{Yu}, but we correct errors for $2^5$ and $2^4$. Note that the three non-toral groups are $F_{0,3}$, $F_{1,3}$, and $F_{2,3}$ as defined in \cite[p.\ 272]{Yu}, see also Case $F_4$ above. Since $2^5\leq F_4(K)\leq G$ and
$\Out_{F_4(K)}(2^5)=2^{2\cdot 3}.(\Sym_3\times \SL_3(2))$, we have $\Out_G(2^5)\geq 2^{2\cdot 3}.(\Sym_3\times \SL_3(2))$. Note that $E=2^5$ contains
involutions $2\A$ and $2\B$, so $\Out_G(2^5)$ is a proper subgroup of $\Out(2^5)=\SL_5(2)$.
But $2^{2\cdot 3}.(\Sym_3\times \SL_3(2))$ is a maximal (parabolic) subgroup of $\Out(2^5)$, so
$\Out_G(2^5)=\Out_{F_4(K)}(2^5)$. Similarly, $N_G(2^4)=(2^3\times \GL_2(K))(2^3\mcolon\SL_3(2))$, since $\Out_{F_4(K)}(2^4)=2^3\mcolon\SL_3(2)\leq \Out(2^4)=\SL_4(2)$ is maximal.

\subsection{Proof of Proposition \ref{prop:algsub2} for $G=E_{7,{\ssc}}(K)$.} By \cite[Table I]{Griess} there exists a unique maximal non-toral subgroup $M=2^6$ of $G$ with
\[ C_G(M)=2^{3} \times \SL_{2}(K)^{3}
\quad\text{and}\quad
N_{G}(M)/C_{G}(M) = 2^{2 \cdot 3}.(\SL_{3}(2) \times \Sym_{3}).\] We now determine a set of generators for $M$ which allow us to easily determine the class of an element of $M$, and to concretely realise $N_{G}(M)/C_{G}(M)$ as the subgroup of $\GL_{6}(2)$ of all matrices of the form
\begin{eqnarray}\label{E7scNGM}
\begin{pmatrix}
A & \mathbf{0} & \mathbf{0} \\ \ast & B & \mathbf{0} \\ \mathbf{0} & \mathbf{0} & 1
\end{pmatrix} 
\end{eqnarray} 
where $A \in \SL_{3}(2)$, $B \in \SL_{2}(2)$, and $\ast$ denotes an arbitrary $2\times3$ matrix.

First, write $(2^{3})_{a}$ for the non-toral subgroup $2^{3}$ arising from $E_{6}$. This remains non-toral in $G$ by \cite[Remark 8.4]{Griess}, has distribution $2\B_{7}$ in $G$, and we have shown above that it lies in the $G_{2}$ factor of a subgroup of type $A_{2}G_{2}$, maximal among connected subgroups of $E_{6}(K)$.  Since the centraliser in $G$ of the factor $G_{2}$ contains a subgroup $A_{2}$, by inspecting \cite[Table 8.2]{LSreductive} it follows that the connected centraliser of this $G_{2}$ factor is simple of type $C_{3}$. Using Lemma~\ref{lem:centdim}, we deduce that $C_{G}((2^{3})_{a})$ has dimension $21$, and therefore $C_{G}((2^{3})_{a})^{\circ}$ is simple of type $C_{3}$. Inspecting \cite[Tables 8.2 and 8.6]{LSreductive} also shows that the centre of this subgroup of type $C_{3}$ equals $Z(G)=\langle z\rangle$ since it acts trivially on $L(G)$. Now the subgroup of $G$ of type $G_{2}C_{3}$ is a maximal subgroup of $G$, and so each factor is the full centraliser of the other. Hence $N_{G}((2^{3})_{a}) \le N_{G}(C_{G}((2^{3})_{a})^{\circ}) = G_{2}(K) \times \Sp_{6}(K)$. It follows that \[N_{G}((2^{3})_{a}) = (2^{3}\mcolon \SL_{3}(2)) \times \Sp_{6}(K)\quad\text{and}\quad C_{G}((2^{3})_{a}) = 2^{3} \times \Sp_{6}(K).\] The above also shows that $M \le C_{G}((2^{3})_{a}) \le G_{2}(K) \times \Sp_{6}(K)$. Since $(2^{3})_{a} \le G_{2}$ and $Z( C_{G}(M)^{\circ}) = Z( \SL_{2}(K)^{3}) \le \Sp_{6}(K)$, we obtain \[M = 2^6=(2^{3})_{a} \times Z( \SL_{2}(K)^{3}) = (M \cap G_{2}(K)) \times (M \cap \Sp_{6}(K)).\] We now use this decomposition to describe the $G$-classes of elements of $M$ and the action of $N_{G}(M)$. From \cite[Table 10.2]{LSpositive}, the subgroup $G_{2}(K) \times \Sp_{6}(K)$ acts on the $56$-dimensional $G$-module with two composition factors; one is a tensor product $V_{7} \otimes V_{6}$ of the natural $7$- and $6$-dimensional modules for $G_{2}(K)$ and $\Sp_{6}(K)$, denoted in \cite{LSpositive} by their highest weights `$10$' and `$100$'. The other is a $14$-dimensional irreducible $\Sp_{6}(K)$-module, $V_{14}$ (highest weight `$001$').  Straightforward weight calculations show that the alternating third power of $V_{6}$ is isomorphic to $V_{6} \oplus V_{14}$ as an $\Sp_{6}(K)$-module. Hence the eigenvalues of an element on $V_{14}$ can be determined directly from those on $V_{6}$. In particular, letting $\chi_{7}$, $\chi_{6}$, and $\chi_{14}$ denote the characters of the three respective modules, if $x \in (2^{3})_{a}$ and $y \in Z( \SL_{2}(K)^{3})$, then $\chi_{\rm min}(xy) = \chi_{7}(x)\chi_{6}(y) + \chi_{14}(y)$; see Section \ref{secGenProp} for the definition of $\chi_{\rm min}$. For $i=1,2,3$, let  $y_i$ be a generator of the centre of the $i$-th factor in $\SL_{2}(K)^{3}$, and write $z=y_1y_2y_3\in Z(G)$. Each $y_i$ is $\Sp_{6}(K)$-conjugate to a diagonal matrix $\diag(-1,-1,1,1,1,1)$, hence has trace $2$ on $V_{6}$ and $-6$ on $V_{14}$. The products $y_{i}y_{j}$ $(i \neq j)$ have trace $-2$ on $V_{6}$ and $6$ on $V_{14}$, and $y_{1}y_{2}y_{3}$ has trace $-6$ on $V_{6}$ and $-14$ on $V_{14}$. Therefore if $x \in (2^{3})_{a}$ and $y \in Z( \SL_{2}(K)^{3})$, we find
\[\small
\chi_{\rm min}(xy) = \left\{
\begin{array}{rll}
56 & \textup{:\ } x = 1,\ y = 1 \\
-56 & \textup{:\ } x = 1,\ y = y_1 y_2 y_3 (= z) \\
8 & \textup{:\ } x = 1,\ y \in \{y_1,y_2,y_3\}, & \textup{or } x \neq 1,\ y \in \{1, y_1 y_2,y_2y_3,y_1y_3\} \\
-8 & \textup{:\ } x \neq 1,\ y \in \{y_1,y_2,y_3,z\}, & \textup{or } x = 1,\ y \in \{y_1y_2,y_2y_3,y_1y_3\}
\end{array}
\right.
\]
which allows us to determine the distribution of $M$ via Table \ref{tabsmallelts} as $2\A_1\B_{31}\C_{31}$. Now, $N_{G}(M)$ acts on $\SL_{2}(K)^{3}$, and hence on $\{y_1,y_2,y_3\}$, via its subgroup $\Sym_{3}$. The subgroup $(2^{3})_{a} \times \left<y_1y_2,y_2 y_3\right>$ is then the unique subgroup of $M$ containing all $2\B$-involutions not lying in the $N_{G}(M)$-orbit $\{y_1,y_2,y_3\}$, and is therefore $N_{G}(M)$-invariant. Thus if $(2^{3})_{a} = \left<x_1,x_2,x_3\right>$ then $N_{G}(M)$ preserves the direct-product decomposition  $M = 2^6=\langle x_1,x_2,x_3,y_1y_2,y_2y_3\rangle \times \left<z\right>$, 
and taking the elements shown as an ordered vector space basis for $M$, we realise $N_{G}(M)/C_{G}(M)$ as the set of matrices as in \eqref{E7scNGM} above.

Now define $(2^{3})_{b} = \left<x_1,x_2,zx_3\right>$, and note that $C_{G}((2^{3})_{b}) = C_{G}((2^{3})_{a})$, so $(2^3)_b$ is non-toral by Lemma \ref{lem:istoral} and has distribution $2\B_{3}\C_{4}$ by Table \ref{tabsmallelts}. A direct calculation  with \eqref{E7scNGM} shows that $N_{G}(M)$ has eleven orbits on subgroups of $M$ containing a conjugate of $(2^{3})_{a}$ or $(2^{3})_{b}$: in addition to the subgroups  $M$, $(2^{3})_{a}$, and $(2^{3})_{b}$, there are subgroups
\begin{align*}
(2^{4})_{a} &= 2\A_{1}\B_{7}\C_{7} = \left< x_1, x_2, x_3, z \right>, & (2^{4})_{b} &= 2\B_{14}\C_{1} = \left< x_1, x_2, x_3, zy_1\right>, \\
(2^{4})_{c} &= 2\B_{8}\C_{7} = \left< x_1, x_2, x_3, y_1 \right>, & (2^{4})_{d} &= 2\B_{6}\C_{9} = \left<x_1,x_2,zx_3,zy_1\right>, \\
(2^{5})_{a} &= 2\A_{1}\B_{15}\C_{15} = \left<x_1,x_2,x_3,y_1,z\right>, & (2^{5})_{b} &= 2\B_{28}\C_{3} = \left<x_1,x_2,x_3,zy_1, zy_2\right>, \\
(2^{5})_{c} &= 2\B_{16}\C_{15} = \left<x_1,x_2,x_3,y_1,y_2\right>, & (2^{5})_{d} &= 2\B_{12}\C_{19} = \left< x_1, x_2, zx_3, zy_1, zy_2\right>.
\end{align*}
Each such subgroup is non-toral, and their distributions (given in Table~\ref{tabsubs2_G2_F4_E6_E7sc}) show that the subgroups are pairwise non-conjugate in $G$. There are $15$ further $N_{G}(M)$-orbits on subgroups of $M$ of rank $3$ or more, and each of these has an $M$-conjugate contained in the subgroup $\left<x_1,x_2,y_1,y_2,y_3\right>$. We claim that this subgroup, and therefore all of its subgroups, are toral. By construction, $\left<x_1,x_2,x_3\right>$ is contained in a subgroup $G_2$ and $\left<y_1,y_2,y_3\right>$ is a central subgroup of $\SL_{2}(K)^{3}$, which is in turn contained in a simple subgroup of type $C_{3}$, and these subgroups $G_2$ and $C_3$ commute. Moreover, $\left<x_1,x_2\right>$ is toral in $G_2(K)$, and $\left<y_1,y_2,y_3\right>$ is toral in $\SL_{2}(K)^{3}$ by Lemma~\ref{lem:istoral}. The claim follows. \label{comment:why-only-11-nontorals}

{\bf Centralisers.} We now determine centralisers. Each subgroup above is generated by either $(2^{3})_{a}$ or $(2^{3})_{b}$ together with some involutions in $\Sp_{6}(K)$. The $\Sp_{6}(K)$-centraliser of a non-central involution is $\SL_{2}(K) \times \Sp_{4}(K)$, and it is then straightforward to see that the $\Sp_{6}(K)$-centraliser of an elementary abelian $2$-subgroup of $\Sp_{6}(K)$ is either $\Sp_{6}(K)$ itself, or $\SL_{2}(K) \times \Sp_{4}(K)$, or $\SL_{2}(K)^{3}$. This determines the centraliser of every non-toral subgroup of $M$.

{\bf Normalisers.} It remains to determine the normaliser quotient $N_{G}(E)/C_{G}(E)$ for each subgroup $E$. Firstly, the non-toral subgroups of $E_{6}$ remain non-toral in $G$ under the inclusion $3.E_{6}(K) \to G$ as they each contain $(2^{3})_{a}$. These three subgroups are each contained in a subgroup $F_{4}(K)$ by construction, and then \cite[Propositions~5.4 and 5.5]{Yu} show that every possible class-preserving automorphism of such a subgroup is already induced by conjugation by an element of $F_{4}(K)$, hence the subgroups $(2^3)_a$, $(2^4)_b$, and $(2^5)_b$ with distributions $2\B_{7}$, $2\B_{14}\C_{1}$ and $2\B_{28}\C_{3}$ have the same normaliser quotient as in $E_{6}(K)$, and in $F_{4}(K)$. For $E = (2^{3})_{b}$, any automorphism induced by conjugation in $G$ must preserve the subgroup $\left<x_1,x_2\right>$ as this is generated by all $2\B$-involutions in $E$. The group $\SL_{3}(2)$ of automorphisms of $(2^{3})_{a}$ centralises $z$, and the stabiliser of $\left<x_1,x_2\right>$ in $\SL_{3}(2)$ induces all such automorphisms on $(2^{3})_{b}$. It follows that $N_G(E)/C_G(E)=2^2.\SL_2(2)$ is the stabiliser of a $2$-dimensional subspace.

A direct calculation shows that if $E = (2^{4})_{a}$ or $E=(2^{4})_{c}$, then $E$ contains a unique $2\B$-pure subgroup of order $8$, namely $(2^{3})_{a}$, and so $N_{G}(E) \le N_{G}((2^{3})_{a}) = ( (2^{3}\mcolon \SL_{3}(2)) \times \Sp_{6}(K)$. Since $y_1$, $z$, and $y_{1}z$ are involutions in $\Sp_{6}(K)$ lying in distinct $G$-conjugacy classes, and since $\Sp_{6}(K)$ is centralised by the action of $\SL_{3}(2)$, it follows that $N_{G}( (2^{4})_{a} ) / C_{G}( (2^{4})_{a} ) = N_{G}( (2^{4})_{c} ) / C_{G}( (2^{4})_{c} ) = \SL_{3}(2)$. The group $E = (2^{4})_{d}$ contains a unique subgroup $E_0$ with distribution $2\B_{6}\C_{1}$, and this in turn contains a unique involution in class $2\C$; thinking of $N_{G}(E)/C_{G}(E)$ as a subgroup of $\GL_{4}(2)$, it is contained in the set of matrices of the form
\[
\begin{pmatrix}
1 & \mathbf{0} & \mathbf{0} \\ \ast & A & \mathbf{0} \\ \ast & \ast & 1
\end{pmatrix}
\]
with $A \in \GL_{2}(2)$. The $N_{G}(M)$-stabiliser of $(2^{4})_{d}$ induces all such automorphisms, giving the stated normaliser structure $N_G(E)/C_G(E)=2^{1+4}.\Sym_3$. Similarly, $E = (2^{5})_{a}$ contains a unique subgroup $E_{0}$ with distribution $2\B_{14}\C_{1}$ and $N_{G}(E)$ preserves the direct-sum decomposition $E = E_{0} \times \langle z\rangle$. Moreover, $E_{0}$ contains a unique involution in class $2\C$. Hence $N_{G}(E)/C_{G}(E)$ can be identified with a set of matrices of the form
\[
\begin{pmatrix}
1 & \mathbf{0} & \mathbf{0} \\ \ast & A & \mathbf{0} \\ \mathbf{0} & \mathbf{0} & 1
\end{pmatrix}
\]
with $A \in \GL_{3}(2)$. Again, $N_{G}(M)$ induces all such automorphisms, hence we obtain $N_G(E)/C_G(E)=2^3:\SL_3(2)$. If $E = (2^{5})_{c}$, then $E$ contains a unique subgroup $E_0$ with distribution $2\B_{14}\C_{1}$, and $E_0$ is  a conjugate of $(2^{4})_{b}$. Thus, $N_{G}(E)$ is contained in $N_{G}( E_{0} ) = (2^{3} \times \SL_{2}(K) \times \Sp_{4}(K)).(2^{3}\mcolon \SL_{3}(2))$. The subgroup $\SL_{2}(K)$ centralises $M$ and thus $E$, while the subgroup $\Sp_{4}(K)$ contains an element swapping $y_{1}$ and $y_{2}$ (taking $y_{1}y_{2}$ to be the central element of $\Sp_{4}(K)$ without loss of generality), and acting trivially on the subgroup $\left<x_1,x_2,x_3\right>$ of $E$. Hence $N_{G}(E)/C_{G}(E) \le 2 \times (2^{3}\mcolon \SL_{3}(2))$. We again find that $N_{G}(M)$ induces all such automorphisms on $E$, so $N_G(E)/C_G(E)=2\times (2^3:\SL_3(2))$. Finally, let $E = (2^{5})_{d}$. This contains a unique subgroup $E_0$ with distribution $2\B_{12}\C_{3}$, and $E_{0}$ in turn contains a unique subgroup with distribution $2\C_{3}$.\label{comment:this-cant-happen} Thus $N_{G}(E)/C_{G}(E)$ is contained in the set of all matrices of the form
\[
\begin{pmatrix}
1 & \mathbf{0} & \mathbf{0} \\ \ast & A & \mathbf{0} \\ \ast & \ast & B
\end{pmatrix}
\]
with $A,B \in \GL_{2}(2)$. Once again, $N_{G}(M)$ induces all such automorphisms on $E$, so $N_G(E)/C_G(E)$ is determined.

\subsection{Proof of Proposition \ref{prop:algsub2} for $G=E_{7,{\ad}}(K)$}\label{secE7adYu}
Let $G = E_{7,{\ad}}(K)$ and $p=2$. The three classes of involutions in $G$ are denoted by $2\BC$, $4\A$, and $4\classH$, corresponding to the classes in the simply connected cover of $G$ which map onto them (see Section~\ref{secGenProp}). In \cite{Yu}, Yu has identified 78 classes of elementary abelian $2$-subgroups of $G$ and determined their normaliser quotients. However, it remains to determine their centraliser structure and whether or not these subgroups are toral. In the course of determining this information, we have found some typographic errors in the ancillary data of \cite{Yu}, particularly in \cite[Remark~7.27]{Yu}, although we agree with the eventual classification itself. For this reason, we now carry out a number of our own calculations, verifying much of the information in \cite{Yu}.

The toral elementary abelian $2$-subgroups of $G$ can be determined with our algorithm in Section~\ref{secCompToral}. We find that there are $32$ conjugacy classes of such toral subgroups. It follows from \cite{Griess} that $G$ has two maximal non-toral subgroups up to conjugacy, of orders $2^7$ and $2^8$, respectively. Between them, these contain representatives of each class of non-toral subgroups; since the subgroup of order $2^8$ is obtained by adjoining a Chevalley involution to a maximal toral subgroup of order $2^7$, it also contains a representative of every class of toral subgroups.

By \cite[Table 1]{Griess}, the maximal non-toral subgroups $2^7$ and $2^8$ are each self-centralising, and have normaliser quotients as follows
\begin{align*}N_G(2^8)/2^8&=2^7:\SO_7(2)\quad\text{and}\\ N_G(2^7)/2^7&=(2^{2 \cdot 2} \times 2^{2 \cdot 3})\mcolon(\Sym_{3} \times \Sym_{3} \times \SL_{3}(2)).
\end{align*}Because $N_G(2^7)$ and $N_G(2^8)$ are finite, and their actions on $2^7$ and $2^8$ are known (cf.\ \cite[Proposition~7.27]{Yu} and Appendix \ref{appYuE7norm}), we can explicitly construct these groups  as matrix groups of degree $7$ and $8$, respectively. In particular, we can calculate the orbits of these groups on the subgroups of $2^7$ and $2^8$, using Magma \cite{magma}. Our computation also returns the complete subgroup lattice for these groups. Appendix~\ref{appYuConstDir} comments on our construction. Recall that every non-toral subgroup of $G$ appears as a subgroup of at least one of the maximal $2^{7}$ and $2^{8}$. The (known) distributions of $2^7$ and $2^8$ allow us to identify the distributions of the subgroups in each orbit. From this, together with the information on toral subgroups already calculated, we can determine for most subgroups occurring whether the subgroup is toral or non-toral. A few cases require further arguments, and for these we defer to information from \cite{Yu}. In the end, we are able to identify $46$ conjugacy classes of non-toral elementary abelian $2$-subgroups of $G$; these are listed in Table~\ref{tabsubs2_E7adNEW}. We  label these groups according to the classification in \cite{Yu}, see Appendix~\ref{appYuE7names} for a description of this classification. Our investigations yield the following corollary.

\begin{corollary}
The complete Hasse diagram of non-toral $2$-subgroups in $G$ is given in Figure \ref{figHasseAllE7}.
\end{corollary}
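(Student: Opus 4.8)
The plan is to obtain the Hasse diagram directly from the subgroup–lattice data already assembled for Proposition~\ref{prop:algsub2}. First I would recall that every non-toral elementary abelian $2$-subgroup of $G$ has a $G$-conjugate inside one of the two maximal non-toral subgroups $2^7$ or $2^8$, and that by \cite[Table 1]{Griess} the normalisers $N_G(2^7)$ and $N_G(2^8)$ are known; following Appendix~\ref{appYuConstDir} I would realise each as an explicit finite matrix group on $\mathbb{F}_2^7$, resp.\ $\mathbb{F}_2^8$, and use Magma \cite{magma} to list the full subgroup lattices together with the $N_G(2^7)$- and $N_G(2^8)$-orbits. Discarding the toral orbits (recognised via Table~\ref{tabNumbers} and the $\Dist$-values, with the few ambiguous cases settled from \cite{Yu} exactly as in the proof of Proposition~\ref{prop:algsub2}) and identifying the orbits common to $2^7$ and $2^8$ by their class distributions and Yu's names, one is left with precisely the $46$ classes of Table~\ref{tabsubs2_E7adNEW} together with, for each, a record of which smaller orbits it contains.

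Next I would read off the covering relations. The key simplification is a rank argument: if $E$ is non-toral then so is every overgroup of $E$ in $G$ (a toral overgroup would force $E$ toral), so in an inclusion $E \le E'$ of non-toral groups every intermediate subgroup is non-toral; since all groups in sight are elementary abelian $2$-groups, it follows that $[E]$ is covered by $[E']$ in the poset of $G$-classes precisely when $|E'| = 2|E|$ and some $G$-conjugate of $E$ lies in $E'$. Both conditions are read off from the data — the order comparison is immediate, and the containment is exactly membership of the appropriate orbit in the subgroup lattice of whichever maximal subgroup contains a conjugate of $E'$ (if $E'$ is itself maximal, of $E'$). Collating these relations over $2^7$ and $2^8$ yields the edges of Figure~\ref{figHasseAllE7}; no inconsistency can arise between the two lists, since any inclusion $E \le E'$ with $E'$ non-maximal is already witnessed inside whichever maximal subgroup contains a conjugate of $E'$.

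The hard part will be the gluing step: a priori two distinct orbits — two $N_G(2^7)$-orbits, or an $N_G(2^7)$-orbit and an $N_G(2^8)$-orbit — could be fused under the larger group $G$, which would merge both nodes and their incident edges. I would rule this out as elsewhere in the proof of Proposition~\ref{prop:algsub2}: the distributions $\Dist(E)$, refined where they coincide by the isomorphism type of $C_G(E)$ (equivalently by Yu's invariants), separate all $46$ classes, and the resulting count matches the independent tally of $78$ classes of elementary abelian $2$-subgroups of $G$ from \cite[Theorem 1.1]{Yu} minus the $32$ toral classes of Table~\ref{tabNumbers}. Hence no further fusion occurs, the $46$ nodes and their covering relations are exactly those computed, and Figure~\ref{figHasseAllE7} is the complete Hasse diagram.
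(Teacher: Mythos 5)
Your proposal is correct and follows essentially the same route as the paper: the corollary is a byproduct of the computation already carried out for Proposition~\ref{prop:algsub2}, namely listing the full subgroup lattices of the maximal non-toral $2^7$ and $2^8$ inside their finite normalisers, identifying the non-toral orbits with the $46$ classes via distributions, centraliser data and Yu's labels, and recording inclusions up to conjugacy in those normalisers (cf.\ Appendix~\ref{appYuConstDir}). Your explicit observations that covers can only occur at index $2$ (since overgroups of non-toral subgroups are non-toral) and that every inclusion is witnessed inside whichever maximal subgroup contains a conjugate of the larger group merely make precise what the paper leaves implicit.
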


The structure of the normaliser quotients of the groups in Table~\ref{tabsubs2_E7adNEW} has been determined in \cite[Proposition~7.26]{Yu}, see also Appendix \ref{appYuE7norm} for more details. In the remainder of this section, we  discuss the centralisers of the  non-toral subgroups; for this the inclusions implied in the Hasse diagram (Figure \ref{figHasseAllE7}) will be useful. In most cases, the centraliser of $E$ with $|E|>2^2$ is determined in $C_G(U)$ where $E=\langle U,u\rangle$ for some non-toral $U$; the column labelled ``$U$'' in Table~\ref{tabsubs2_E7adNEW} lists this subgroup, see Case (2) in the proof of Proposition~\ref{propCent}.
 
\begin{proposition}\label{propCent} 
If  $E$ is as given in Table \ref{tabsubs2_E7adNEW}, then  $C_G(E)$ is as given in that table.
\end{proposition}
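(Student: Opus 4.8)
The plan is to determine each centraliser $C_G(E)$ for the $46$ non-toral subgroups $E \leq G = E_{7,\ad}(K)$ listed in Table~\ref{tabsubs2_E7adNEW} by a combination of dimension bounds, containment in known centralisers, and explicit reduction to smaller rank. The organising principle is the Hasse diagram (Figure~\ref{figHasseAllE7}): since every non-toral $E$ contains one of the two minimal non-toral subgroups (the $2^3$-type subgroups, in the classes $F_0'$ or $F_0''$, equivalently coming from $E_6$ or from a $C_3$-type subsystem), and since larger non-toral subgroups are obtained by adjoining involutions, we can compute centralisers inductively from the bottom of the diagram upward. Concretely, for $E = \langle U, u\rangle$ with $U$ a smaller non-toral subgroup whose centraliser $C_G(U)$ is already known, we have $C_G(E) = C_{C_G(U)}(u)$, and this is the centraliser of an involution (or small elementary abelian subgroup) inside a much smaller reductive group, typically a product of classical groups, where centralisers of involutions are standard. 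The column labelled ``$U$'' in Table~\ref{tabsubs2_E7adNEW} records precisely which $U$ to use for each $E$.

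First I would establish the base cases. For $(2^2)_a = F_{0,0}$, $(2^2)_b = F_{0,0,0,0}$, and $(2^2)_c = F_0'$: these are the smallest non-toral subgroups, and their centralisers ($2^2 \times F_4(K)$, $2^2 \times \PSp_8(K)$, $2^2 \times \POmega_8(K)$) are classically known as centralisers of the relevant $\Z/2 \times \Z/2$ in $E_7$; one can cite \cite{Griess} or compute the dimension via Lemma~\ref{lem:centdim} (using the known character of $L(G)$ as a module for the subgroup) together with the known semisimple rank and the action on $L(G)$ listed in \cite{LSreductive}, exactly as was done for $E_6$ and $E_{7,\ssc}$ in the earlier cases. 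Similarly for the $2^3$-type non-toral subgroups $F_0''$, $F_{0,1}''$, $F_{1,0}$, $F_{0,1}$, $F_{1,0,0,0}$, $F_{0,0,1,0}$, $F_1'$: the centralisers ($2^3 \times \PSp_6(K)$, $2^3 \times \PSO_6(K).\gamma$, $2^2 \times \Spin_9(K)$, etc.) are obtained by identifying the subgroup explicitly inside a suitable subsystem subgroup and citing the known involution-centraliser structure, with dimensions cross-checked by Lemma~\ref{lem:centdim}. The references \cite{Griess} and \cite{Yu} give enough explicit data on the generators of these subgroups (in terms of the root datum) to carry this out.

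Then I would do the inductive step: process the subgroups in order of increasing rank (following Figure~\ref{figHasseAllE7}). For $E = \langle U, u\rangle$ of rank $r+1$ with $U$ of rank $r$ already handled, write $C_G(U) = 2^r \times R$ (or $2^r \times R.\sigma$ where $\sigma$ is a graph/diagonal outer part) with $R$ a product of classical groups. The extra involution $u$ acts on $R$ (its component in $2^r$ being irrelevant for the centraliser computation since it's central there), and $C_G(E) = 2^r \times C_R(u)$ up to the outer part. Since $R$ is built from $\SL_2(K)$, $\Sp_{2m}(K)$, $\SL_m(K)$, $\SO_m(K)$, $\Spin_m(K)$ and their central products/quotients, the centraliser $C_R(u)$ of an involution is read off from the classical theory (involutions in $\Sp_{2m}$ have centralisers $\Sp_{2a} \times \Sp_{2b}$ or $\GL_m$; in $\SO_m$ they give $\SO_a \times \SO_b$; etc.), with care about which central product, which isogeny type, and which graph automorphism survives — the column $\dim$ in the table gives $\dim C_G(E)$ as a consistency check, and the distribution $\Dist(E)$ (already known from $2^7$ and $2^8$) pins down the $G$-class of $u$ inside $R$, hence which centraliser case applies. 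For the handful of cases where $u$ induces an outer automorphism on a factor of $R$ (the $\gamma$ and $\leftrightarrow$ decorations), the fixed-point subgroup is computed using the standard description of fixed points of graph automorphisms (e.g.\ $\SL_{2m}(K)^{\gamma} = \Sp_{2m}(K)$ or $\SO_{2m}(K)$, $\Spin_{2m}(K)^{\leftrightarrow}$ a diagonal $\Spin$, $\SO_{2m}^+(K)^\gamma = \SO_{2m-1}(K)$), again cross-checked against $\dim$ and $\Dist$.

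The main obstacle will be the bookkeeping around central products, isogeny types, and outer factors: $E_{7,\ad}$ has centre-related subtleties (many centralisers are central products $\circ_2$ or quotients like $\GL_n(K)/2$, and some carry an extra graph automorphism $\gamma$ because the ambient group is adjoint rather than simply connected), so determining $C_R(u)$ precisely — not just up to isogeny — requires tracking how $u$ interacts with the central tori and the outer automorphism, and distinguishing e.g.\ $\Sp_4 \circ \Sp_4$ from $(\Sp_4 \circ \Sp_4).2$. The isogeny types are settled, as in Section~\ref{secGenProp}, by finding the smallest group acting faithfully on the relevant composition factors of $L(G)$; the presence or absence of the outer $2$ or $\gamma$ is settled by comparing $\dim$ (which detects the connected part) with the normaliser-quotient data of \cite{Yu} and with direct checks on whether a swapping element or graph automorphism actually centralises $E$. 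A secondary obstacle is the few subgroups not reachable cleanly from a single smaller non-toral $U$ (or where two natural choices of $U$ give apparently different-looking answers that must be reconciled); for these I would either use a second containment from the Hasse diagram or fall back to the explicit matrix computation in the $7$- and $8$-dimensional representations of $N_G(2^7)$ and $N_G(2^8)$ described in Section~\ref{secE7adYu}, deferring to \cite{Yu} for confirmation as the authors indicate.
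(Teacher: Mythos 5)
Your proposal matches the paper's proof in essentially all respects: the paper likewise establishes the rank-$2$ and two of the rank-$3$ centralisers as base cases (via Yu's Klein-four data and an explicit lift to $E_{7,\ssc}$), then works up the Hasse diagram writing $E=\langle U,u\rangle$ and computing $C_G(E)=C_{C_G(U)}(u)$ from the standard involution-centraliser tables, using $\dim C_G(E)$ to pin down the class of $u$ and falling back on normaliser-quotient data and secondary containments exactly where you anticipate (e.g.\ to separate $(2^5)_i$ from $(2^5)_j$, which share both dimension and distribution). The only place your plan is thinner than the paper is in determining the disconnected parts of the centralisers (e.g.\ $\PSO_6(K)$ versus $\PSO_6(K).\gamma$ for $(2^3)_e$), where dimension and distribution cannot decide and the paper needs a genuinely explicit element-level argument — but you do flag precisely this as requiring direct checks, so the approach is the same.
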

\begin{proof}
We consider the following sets of groups as defined in Table  \ref{tabsubs2_E7adNEW}:
\begin{align*}
  \cX_0&=\{ (2^2)_a, (2^2)_b, (2^2)_c,(2^3)_e,(2^3)_g\},\\
  \cX_1&=\{(2^4)_j, (2^5)_k, (2^5)_f, (2^5)_h,(2^6)_g, (2^6)_c, (2^6)_b\},\\
  \cX_2&=\{(2^4)_i,  (2^5)_i,  (2^5)_j, (2^6)_f\},
\end{align*}and make a case distinction.
  \begin{iprf}
   \item[{\bf (Case 1)}] Suppose $E\in \cX_0$. The centraliser of each Klein four-subgroup of a compact simple real Lie group of type $E_{7}$ is given in \cite[Table 6]{YuKleinFour}, and in our three cases $(2^2)_a,(2^2)_b,(2^2)_c$, this is the direct product of the non-toral subgroup in question with a maximal connected subgroup (in the real topology) of the centraliser in the complex algebraic group; cf.\ \cite[Table 4.3.3]{Gor}.  (Note that \cite[Table 6]{YuKleinFour} lists the centraliser $\Aut(\mathfrak{e}_7)^\Gamma$ of each Klein-four subgroup $\Gamma$ of $\Aut(\mathfrak{e}_7)$, and the latter is a maximal compact subgroup of $G$ since $G$ is adjoint.) The centraliser structure for the subgroups of rank $2$ in Table \ref{tabsubs2_E7adNEW} follows.

To consider $(2^3)_e$ and $(2^3)_g$, we need some preliminary results. Let  $H=E_{7,\ssc}$, so that  $G=H/Z$ where $Z=Z(H)=\langle z\rangle$. By \cite[Tables 4.3.1--4.3.3]{Gor} there is an involution $t\in H$ such that \[C_H(t)=\Spin_{12}(K)\circ_{2} \SL_2(K),\] where the central product is over 
$2=\langle 2_s{:}2\rangle \leq \Spin_{12}(K)\times \SL_2(K)$. Note that $Z\leq Z(C_H(t))$, and 
$Z(C_H(t))=\langle 2_c{:}1, 2_s{:}1 \rangle =Z(\Spin_{12}(K))$ where $2_s{:}1=1{:}2$ generates $Z(\SL_2(K))$. It follows also from \cite[Tables 4.3.1--4.3.3]{Gor} that  $tZ\in G$ has  centraliser $C_G(tZ)=\Spin_{12}(K)\circ_{2^2} \SL_2(K)$, where the central product is over $2^2=\langle 2_s{:}1, 2_c{:}2\rangle$. This shows that $C_G(tZ)=C_H(t)/Z$. Since $Z(C_G(tZ))=2$ is generated by $2_c{:}1$, it follows that $Z=\langle 2_s{:}1\rangle$ is contained in $\Spin_{12}(K)\circ_{2} \SL_2(K)$,  so 
$z=2_s{:}1=1{:}2$ lies in $Z(\SL_2(K))$. A direct computation shows that there is a toral 
$X=2^2\leq H$ with $t\in X$ such that   $C_H(X)^\circ =A_5 T_2$ and $C_H(X)=C_H(X)^\circ .w=C_H(X)^{\circ}.2$,  and $\Out_H(X)=\Sym_3$. Let $X=\langle t, y\rangle$ for some involution 
$y=y_1{:}y_2\in C_H(t)$.  Since $C_{C_H(t)}(y)^\circ =A_5 T_2$, it follows that
$C_{\SL_2(K)}(y_2)^\circ=T_1$ and $C_{\Spin_{12}(K)}(y_1)^\circ=A_5 T_1$.

By \cite[Tables 4.3.1--4.3.3]{Gor}, there is a projective involution
$u_1\in \Spin_{12}(K)$ such that $C_{\Spin_{12}(K)}(u_1)=\SL_6(K)\circ_{3^*} T_1$, where $3^*=\gcd(3, \ell-1)$ with $\ell$ being the characteristic of $K$.  Choose a subgroup $ \langle u_2, v_2\rangle\leq \SL_2(K)$ isomorphic to $Q_8$ with
$u_2^2=v_2^2=z$.  By \cite[Tables 4.3.1--4.3.3]{Gor}, we may suppose $y_2=u_2$,  and so $y_1^2=y_2^2=z$; this shows that $y_1\in \Spin_{12}(K)$ is a projective involution as well. By \cite[Tables 4.3.1-4.3.3]{Gor}, we may suppose $y_1=u_1$, so  \[C_H(X)=(T_2\circ_{3^*}\SL_6(K)).w.\] Now write  $w=w_1{:}w_2\in C_H(t)$ with $w_1\in \Spin_{12}(K)$ and $w_2\in \SL_2(K)$. It follows from  \cite[Table 4.3.1]{Gor} that $w_2$ acts via inversion on  $T_1$; note that $w_2$ normalizes $C_{\SL_2(K)}(y_2)=T_1$. This implies that $w_2$ centralises $C_{\SL_2(K)/Z}(y_2Z)$, so
$w_1$ centralises $C_{\Spin_{12}(K)/Z}(y_1Z)$.   
By \cite[Table 4.3.1]{Gor}, the element $w_1$ induces $\gamma{:}i$ on
$\SL_6(K)\circ_{3^*} T_1$; here $\gamma$ and $i$ denote a graph automorphism and inversion, respectively. It follows that $w$ acts via inversion on  $T_2$ 
and induces a graph automorphism on $\SL_6(K)$.  Note that $\Out_H(X)=\Sym_3$ acts on both $Z(C_H(X)^\circ)=2\times T_2$ and $Z(\SL_6(K))=2\times 3^*$. Moreover, we have  $X\leq 2\times (T_2)_{(2)}$, and  it follows that $\Sym_3$ centralises $Z(\SL_6(K))_2=2$. If $Z\neq Z(\SL_6(K))_2$, then $\Sym_3$ centralises at least
two distinct involutions in $2\times (T_2)_{(2)}$, which is impossible; this proves that  $Z\leq Z(\SL_6(K))$. Let $Y=XZ=2^3$, so that $C_H(Y)=C_H(X)$. Note that $Y/Z$ is toral in $G$ and $C_G(Y/Z)\geq C_H(X)/Z$. A direct computation shows that $Y/Z=2^2=2\B\C_3$ and it yields that
\[
C_G(Y/Z)=(T_2\circ_{3^*} (\SL_6(K)/Z)).g,
\]
where $g=wZ$ acts via inversion on  $T_2$ and induces a graph automorphism
on $\SL_6(K)$.

Now consider $E=\langle Y/Z,x\rangle=2^3$ for some $x\in  C_G(Y/Z)\setminus C_G(Y/Z)^\circ$; in particular,  $C_{T_2}(x)=2^2$, and $x$ induces a graph automorphism on $\SL_6(K)$. By \cite[Tables 4.3.1--4.3.3]{Gor}, there are two graph automorphisms, $\gamma_1$ and $\gamma_2$, such that $C_{\SL_6(K)}(\gamma_1)=\Sp_6(K)$ and $C_{\SL_6(K)}(\gamma_2)=\SO_6(K)$, and   $C_{{\rm PSL}_6(K)}(\gamma_1)={\rm PSp}_6(K)$ and $C_{{\rm PSL}_6(K)}(\gamma_2)={\rm PSO}_6(K).\gamma$. Note that each group $\langle Y/Z,\gamma_i\rangle$ is non-toral. Comparing dimensions, we can therefore suppose that  \[(2^3)_g=\langle Y/Z, \gamma_1\rangle\quad\text{and}\quad (2^3)_e=\langle Y/Z, \gamma_2\rangle;\]in particular, we can assume that $Y/Z$ is a subgroup of $(2^3)_g$ and $(2^3)_e$. If $3^*=1$, then it follows from above that $C_G(Y/Z)=(T_2\times {\rm PSL}_6(K)).x$, so 
$C_G((2^3)_g)=2^3\times {\rm PSp}_6(K)$ and 
$C_G((2^3)_e)=2^3\times {\rm PSO}_6(K).\gamma$, as claimed. If $3^*=3$, then  $Z(\SL_6(K))=U=6$ and each $C_U(\gamma_i)=Z$. Write $V=U/Z$ and note that
\[
C_{\SL_6(K)}(\gamma_1)/U\leq 
C_{\SL_6(K)/Z}(\gamma_1)/V\leq C_{{\rm PSL}_6(K)}(\gamma_1).
\]
This yields $C_{\SL_6(K)}(\gamma_1)/U={\rm PSp}_6(K)=C_{{\rm PSL}_6(K)}(\gamma_1)$, and it then follows that  $C_{\SL_6(K)/Z}(\gamma_1)=\Sp_6(K)/Z={\rm PSp}_6(K)$ and
$C_G((2^3)_g)=2^3\times {\rm PSp}_6(K)$, as claimed.  Similarly, we deduce that \[{\rm PSO}_6(K)\leq C_{\SL_6(K)/Z}(\gamma_2)\leq {\rm PSO}_6(K).\gamma.\] Let $h\in \SL_6(K)/Z$ such that $hV=\gamma\in C_{{\rm PSL}_6(K)}(\gamma_2)={\rm PSO}_6(K).\gamma$. Since $\gamma^2$ acts trivially on ${\rm PSO}_6(K)$, it follows that $h^2\in Z(\SL_6(K)/Z)=V$, and so $h$ has order $2$ or $6$. Replacing $h$ by $h^3$, if necessary, we may suppose that $|h|=2$. Now $(hV)^{\gamma_2}=hV$, and so $h^{\gamma_2}\in hV$. Since  $|h^{\gamma_2}|=2$, it follows that $h^{\gamma_2}=h$ and therefore 
$C_{\SL_6(K)/Z}(\gamma_2)={\rm PSO}_6(K).\gamma$. This shows that
$C_G((2^3)_e)=2^3\times {\rm PSO}_6(K).\gamma$, as claimed.

\medskip

\item[{\bf (Case 2)}] Suppose $E\not\in_G \cX_0$. It follows from  Figure \ref{figHasseAllE7} that $E=\langle U, u\rangle$ for some non-toral elementary abelian subgroup $U$ containing some subgroup $V\in \cX_0$, and an involution $u$ in $C_G(U)\setminus U$. In particular, \[C_G(E)\leq C_G(U)\leq C_G(V)=2^i\times N.c\] for some $i\in\{2,3\}$ and $N\in\{{\rm PSO}_6(K),{\rm PSp}_6(K), F_4(K),{\rm PSp}_8(K), {\rm P}\Omega_8(K)\}$ and $c\in\{1,\gamma\}$, see Table  \ref{tabsubs2_E7adNEW}. Thus,  $C_G(U)^\circ$ is a reductive group and $u$ acts on $C_G(U)^\circ$. The centralisers of involutions of $C_G(U)^\circ$ can be determined by 
\cite[Tables 4.3.1--4.3.3]{Gor}; note that $C_G(E)=C_{C_G(U)}(u)$ and $\dim C_G(E)=\dim C_{C_G(U)^\circ}(u)$. We use this information to determine $C_G(E)$;  the subgroups $U$ we use are listed in Table~\ref{tabsubs2_E7adNEW}.

\item[{\bf (Case 2a)}] First suppose $E\not\in_G \cX_1\cup \cX_2$. By the choice of $U$, it follows from  \cite[Tables 4.3.1--4.3.3]{Gor} that $u$ is uniquely determined, up to conjugacy in $C_G(U)$, by the dimension $\dim C_{C_G(U)^\circ}(u)$, and so $C_G(E)= C_{C_G(U)}(u)$ can be determined by \cite[Tables 4.3.1--4.3.3]{Gor}. For example, if $E=(2^3)_f$, then $C_G(E)$ has dimension $12$. By Figure \ref{figHasseAllE7}, we have $E=\langle U, u\rangle$ with $U=(2^2)_c$ and
$C_G(U)=2^2\times {\rm P}\Omega_8(K)$. It follows from \cite[Tables 4.3.1--4.3.3]{Gor} that $\text{P}\Omega_8(K)$ contains a unique involution $u$ such that $\dim C_{P\Omega_8(K)}(u)=12$. In fact, $C_{{\rm P}\Omega_8(K)}(u)=(\SO_4(K)\circ_2\SO_4(K))).\langle \gamma{:}\gamma, \leftrightarrow\rangle$, thus 
\[
C_G(E)=2^2\times (\SO_4(K)\circ_2\SO_4(K))).\langle \gamma{:}\gamma, \leftrightarrow\rangle=2^2\times (\SO_4(K)\circ_2\SO_4(K))).2^2.
\]
The centraliser structure of the other groups $E\not\in_G \cX_1\cup \cX_2$ can be obtained similarly; we give a few details for $(2^4)_x$ with $x\in\{a,c,e,h\}$:
\begin{items}
\item[$\bullet$] We have $(2^4)_c=\langle (2^3)_b, u\rangle$ with
$u\in \Spin_9(K)$; since $\Spin_9(K)$ is simply connected, the centraliser  $C_{\Spin_9(K)}(u)=\Spin_8(K)$ is connected, hence $C_G((2^4)_c)=2^2\times \Spin_8(K)$. 
\item[$\bullet$]  We have $(2^4)_e=\langle (2^3)_d, u\rangle$ and 
$C_G((2^3)_d)=2^2\times (\Sp_4(K)\circ_2 \Sp_4(K)).2$. Consider the element $v=I_2\times (-I_2)\in \Sp_4(K)$ and let $w$ be a permutation  matrix corresponding to $(1,3)(2,4)$, so that
$[v, w]=-I_4\in Z(\Sp_4(K))$,  $C_{\Sp_4(K)}(v)=(\SL_2(K))^2$, and $w$ swaps the two factors of $C_{\Sp_4(K)}(v)$. If $u=v v\in \Sp_4(K)\circ_2 \Sp_4(K)$, then the outside 2 of $(\Sp_4(K)\circ_2 \Sp_4(K)).2$ centralises
$u$, and so does and $w{:}w\in \Sp_4(K)\circ_2 \Sp_4(K)$; hence $C_G((2^4)_e)=2^2\times ((\SL_2(K))^2\circ_2 (\SL_2(K))^2).2^2$.
\item[$\bullet$] We have $(2^4)_h=\langle (2^3)_c, u\rangle$ with 
$C_G((2^3)_c)=2^2\times (\GL_4(K)/Z).\gamma$. Let $v=\diag(s,s,s^{-1},s^{-1})$ be in $\GL_4(K)$ with $|s|=4$ 
and let $w\in\GL_4(K)$ be the permutation  matrix corresponding to $(1,3)(2,4)$. Then $C_{\GL_4(K)}(v)=\GL_2(K)^2$ and $[v,w]=-I_4\in Z(\GL_4(K))$. Note that the  graph automorphism $\gamma$ in $(\GL_4(K)/Z).\gamma$ also
gives $[v,\gamma]=-I_4$; if $u=vZ$ and $y=wZ$, then
$C_G((2^4)_h)=2^2\times ((\GL_2(K))^2/Z).y.\gamma$.
\item[$\bullet$] We have $(2^4)_a=\langle (2^3)_a, u\rangle$ with 
$u\in \SL_2(K)\circ_2 \Sp_6(K)$. Write $u=u_1{:}u_2$ with
$u_1\in \SL_2(K)$ and $u_2\in \Sp_6(K)$, and note that $C_{\SL_2(K)}(u_1)=T_1$, so that $|u_1|=4$ with
$u_1^2=-I_2$. Similarly, $C_{\Sp_6(K)}(u_2)=\GL_3(K)$  and
$u_2^2=-I_6\in Z(\Sp_6(K))$. Let $v_1\in \SL_2(K)$ and  
$v_2 \in  \Sp_6(K)$ such that $[u_1, v_1]=-I_2$ and
$[u_2, v_2]=-I_6$, so that $v_1$ inverts 
$C_{\SL_2(K)}(u_1)=T_1$ and $v_2$ induces inverse-transpose 
on $C_{\Sp_6(K)}(u_2)=\GL_3(K)$. Thus,
$v_2$ inverts $T_1=Z(\GL_3(K))$ and induces a graph automorphism on $\SL_3(K)\leq \GL_3(K)$. 
Now $u=u_1{:}u_2\in \SL_2(K)\circ \Sp_6(K)$ is an involution
with $C_{\SL_2(K)\circ \Sp_6(K)}(u)=T_2\circ_{3^\ast} \SL_3(K)$.
The element $v_1{:}v_2$ centralises $u$, inverts 
$T_2= T_1\circ_2 T_1$, and induces $\gamma$ on $\SL_3(K)$.
\end{items}
Note that  $(2^5)_b=\langle (2^4)_a,u\rangle$ with $C_G((2^4)_a)=2^2\times (T_2\circ_3 \SL_3(K)).(i{:}\gamma)$; choose $u=i{:}\gamma$ with $C_{T_2}(i)=2^2$ and $C_{\SL_3(K)}(\gamma)=\text{SO}_3(K)\times\langle \gamma\rangle={\rm PGL}_2(K)\times 2$. For $(2^5)_a$ note that $i{:}\gamma$ acts on $T_3\circ_2\SL_2(K)$ but the graph automorphism $\gamma$ acts like an inner automorphism of $\SL_2(K)$; thus we can replace $i{:}\gamma$ by $i{:}1$.

\medskip

\item[{\bf (Case 2b)}] Suppose $E\in\cX_1$. The methods described in Section \ref{secCompToral} assist us with determining $C_G(E)$.  For example, if $E=(2^4)_j$, then $E$ is the only subgroup of size $2^4$ with $\dim C_G(E)=4$. We take $U=(2^3)_f>(2^2)_c$, so
$C_G((2^3)_f)=2^2\times (\SO_4(K)\circ_2\SO_4(K))).2^2\leq C_G((2^2)_c)=2^2\times {\rm P}\Omega_8(K)$, and a direct computation shows that ${\rm P}\Omega_8(K)$ has an elementary 
abelian subgroup $X=2^2$ such that 
$C_{{\rm P}\Omega_8(K)}(X)=T_4.2_+^{1+4}$. Thus, $C_G(\langle U, X\rangle)=2^2\times T_4.2_+^{1+4}$ and $E=\langle U, X\rangle$ by the uniqueness of $\dim C_G(E)$. Note that the action of $2_+^{1+4}$ is also explicitly determined. The centraliser structure of the other $E\in \cX_1$ can be obtained similarly. For example, the group $(2^5)_h=\langle (2^4)_g,u\rangle $ can be defined via a subgroup $Q_8=\langle v,w\rangle \leq \SL_2(K)$ and an involution  $u=(vZ,vZ)\in {\rm PSL}_2(K)^2\leq C_G((2^4)_g)$ with $C_{{\rm PSL}_2(K)^2.y}(u)=T_2.\langle (wZ,1),(1,wZ),y\rangle\cong T_2.D_8$, where $y$ swaps the two factors ${\rm PSL}_2(K)$. For the group $(2^6)_g=\langle (2^5)_e, u\rangle$ we can choose  $u=u_1{:}u_2\in \SL_2(K)\circ_2 \SL_2(K)\leq C_G(F_{0,0,1,1})$ with each
$|u_i|=4$, and the outside 2 of 
$(\SL_2(K)\circ_2 \SL_2(K)).2$ centralises $u$. If $[v_i,w_i]=-I_2$ for some $w_i\in \SL_2(K)$, then
$w_1{:}w_2$ also centralises $u$, so $C_G((2^6)_g)=2^4\times T_2.2^2$.

\medskip

\item[{\bf (Case 2c)}] Suppose $E\in \cX_2\setminus\{(2^6)_f\}$. First, consider $E=(2^4)_i$, so that $U= (2^3)_e$ and $C_G(E)$ has dimension $7$. In addition, $C_G(U)=2^3\times \PSO_6(K).\gamma=2^3\times {\rm PSL}_4(K).\gamma$,
where $\gamma$ induces a graph automorphism on ${\rm PSL}_4(K)$. Let $g=\gamma$ and $w=\diag(-I_2,I_2)\in \SL_4(K)$, so  $C_{\SL_4(K)}(w)=T_1\circ_2 (\SL_2(K)\times \SL_2(K))$ and, by \cite[Tables 4.3.1--4.3.3]{Gor}, we have $C_{{\rm PSL}_4(K)}(v)=(T_1\circ \SL_2(K)\circ_2 \SL_2(K)).x$, where $v=wZ$, and $x$ inverts $T_1$ and interchanges the two factors of 
$\SL_2(K)\circ_2 \SL_2(K)$. Note that we may take $x$ to be the permutation matrix corresponding to $(1,3)(2,4)$, so that $[x,g]=1=[w,g]$, and hence $C_{{\rm PSL}_4(K).g}(v)=(T_1\circ \SL_2(K)\circ_2 \SL_2(K)).x.g$. Since $g$ stabilises each $\SL_2(K)$, we have that $g$ induces an inner automorphism on $\SL_2(K)$; by replacing $g$ by
$gy$ for some $y=y_1{:}y_2\in \SL_2(K)\circ_2 \SL_2(K)$,  we may suppose that $g$ centralises each $\SL_2(K)$.  Note that we may suppose $[y,x]=1$, hence $[gy, x]=1$,  thus $T_1.g=T_1.2$ where the outside 2 inverts $T_1$. Since $\dim C_{{\rm PSL}_4(K).g}(v)=7$, we may suppose $u=v$,  and hence $E=\langle U,v\rangle$ with \[C_G(E)=C_G((2^4)_i)=2^3\times ((T_1.i)\circ_2 \SO_4(K)).x,\]
where $x$ commutes with $i$, inverts $T_1$, and interchanges the two factors of $\SO_4(K)=\SL_2(K)\circ_2\SL_2(K)$.

Now let $E=(2^5)_j$, so that $\dim C_G(E)=3$, and let 
$U=(2^4)_i$ with $C_G(U)$ given three lines above. By Figure \ref{figHasseAllE7} and Table~\ref{tabsubs2_E7adNEW}, we have  $(2^5)_i=\langle U,z\rangle$ for some involution $z\in C_G(U)$ and $\dim C_G((2^5)_i)=3$. In particular, it follows from Table~\ref{tabsubs2_E7adNEW} that $(2^5)_i$ and $(2^5)_j$ are the only 
subgroups of size $2^5$ whose centralisers have dimension 3. Thus, $C_G(U)$ contains exactly two involutions whose centralisers have dimension 3; we now construct two such centralisers. In the following we identity $\SO_4(K)=\SL_2(K)\circ_2\SL_2(K)$.

Note that $C_{T_1.i}(x)=\langle (T_1)_{(2)}, i\rangle =2^2$ and
$C_{\SO_4(K)}(x)=2\times {\rm PSL}_2(K)$, where 
$2=-I_2{:}I_2 \in\SO_4(K)$, 
thus \[C_{((T_1.i)\circ_2 \SO_4(K)).x}(x)=
2^2\circ_2 (\langle -I_2{:}I_2, x\rangle \times {\rm PSL}_2(K))=2^3\times \PSL_2(K),\] and so $C_{C_G(U)}(x)=2^6\times {\rm PSL}_2(K)$. Let $t=\diag(s, s^{-1}){:}\diag(s, s^{-1})\in \SO_4(K)$, where $|s|=4$. So $|t|=2$, $[t, x]=1$, and $C_{\SO(K)}(t)=(T_1\circ_2 T_1).(j{:}j)$ with $j$ acting via inversion (see \cite[Tables~4.3.1--4.3.3]{Gor}). Note that $[j{:}j, x]=1$ and $[t,x]=1$, so \[C_{((T_1.i)\circ_2 \SO_4(K)).x}(t)=(T_1.i)\circ_2 (T_1\circ_2 T_1).(j{:}j).x=T_3.2^3.\] In particular, $C_G(E)\in_G \{2^6\times {\rm PSL}_2(K), 2^3\times T_3.2^3\}$. If $C_G(E)=2^3\times T_3.2^3$, then $\Out_G(E)$ stabilises $C_G(E)^\circ =T_3$,  and so it also stabilises
$(T_3)_{(2)}=2^3$. Thus $\Out_G(E)$ is a subgroup of the parabolic subgroup $\Out_{\SL_6(2)}(2^3)<\Out(2^6)$. This is impossible, since $\Out_G(E)=2^4{:}\Sp_4(2)$, see Table \ref{tabsubs2_E7adNEW}. In conclusion, $C_G(E)=C_G((2^5)_j)=2^6\times {\rm PSL}_2(K)$, and so $C_G((2^5)_i)=2^3\times T_3.2^3$.

\medskip

\item[{\bf (Case 3)}] Let $E=(2^6)_f$, so that $\dim C_G(E)=0$, and let  $U=(2^5)_f$ with $C_G(U)=2^2\times T_4.2^3$. Let $u\in T_4.2^3\setminus T_4$ be acting via inversion on $T_4$, so that $C_{T_4.2^3}(u)=(T_4)_{(2)}.2^3=2^4.2^3=2^5.2^2$. Now Table~\ref{tabsubs2_E7adNEW} implies that $C_G(E)=2^2\times 2^4.2^3$.
\end{iprf}
\end{proof}

\subsection{Proof of Proposition \ref{prop:algsub2} for $G=E_8(K)$.}\label{secE8Yu}\label{secProofE8} 

Yu \cite[\S 8]{Yu} has classified all elementary abelian $2$-subgroups of $G$, see Appendix~\ref{appYuE8names} for details and group labels; the non-toral groups are listed in Table~\ref{tabsubs2_E8}. The structure of the corresponding normaliser quotients is also determined in \cite{Yu}, see Appendix~\ref{appYuE8norm} for details. In the following we determine the structure of the centralisers.

{\bf Preliminary subgroups.} By \cite[Tables 4.3.1--4.3.3]{Gor}, there is an involution $z\in G$ whose centraliser is $C_G(z)=\SL_2(K)\circ_2 H$, 
where \[H=E_{7,\ssc}\quad\text{ and }\quad L=H/Z(H)=E_{7,\ad}.\]
Note that $H^\sigma=E_{7,\ssc}(q)\leq H$ for a suitable Steinberg morphism $\sigma$; by \cite[Lemma~6.3]{ADE7}, the group $H^\sigma$ has subgroups ${\bf Q}_2,{\bf Q_3},{\bf Q}_4\cong Q_8$ such that 
\begin{align*}
  C_{H^\sigma}({\bf Q}_2)&=Z(H)\times F_4(q),\\ C_{H^\sigma}({\bf Q}_3)&=Z(H)\times {\rm PSp}_8(q),\\
  C_{H^\sigma}({\bf Q}_4)&=Z(H)\times ({\rm P}\Omega_8^+(q).2^2)
\end{align*}
We fix this notation in the following and write  ${\bf Q}_j=\langle x_j, y_j\rangle$ for $j\in \{2,3,4\}$, so that each $x_j^2=y_j^2=z$.

We first determine $C_H({\bf Q}_i)$. The proof of \cite[Lemma 6.2]{ADE7} yields $O^{r}(C_{H^\sigma}(x_2))=E^\epsilon_6(q)$. Note that $C_{H^\sigma}(x_2)\leq C_H(x_2)$ and 
$y_2Z(H)\in C_L(x_2Z(H))$. It follows from \cite[Tables 4.3.1--4.3.3]{Gor} that  $C_H(x_2)=T_1\circ_{3^*} E_{6,\ssc}(K)$, and $y_2=(i{:}\gamma)$ acts on $T_1\circ_{3^*} E_{6,\ssc}(K)$. Thus 
\[
C_H({\bf Q}_2)=2\times C_{E_{6,\ssc}(K)}(\gamma).
\]
By  \cite[Tables 4.3.1--4.3.3]{Gor}, we have
$C_{E_{6,\ssc}(K)}(\gamma)\in \{ F_4(K), {\rm PSp}_8(K)\}$; moreover, we deduce from $C_{H^\sigma}({\bf Q}_2)=2\times F_4(q)$ that  $C_H({\bf Q}_2)=Z(H)\times F_4(K)$; similarly, $C_H({\bf Q}_3)=Z(H)\times {\rm PSp}_8(K)$ follows. Lastly, note that $C_H(x_4)=\SL_8(K)/2$ and $y_4=\gamma$ acts on $ \SL_8(K)/2$.
Since $C_{H^\sigma}({\bf Q}_4)=2\times ({\rm P}\Omega_8^+(q).2^2)\leq C_H({\bf Q}_4)$,
it follows from \cite[Tables 4.3.1--4.3.3]{Gor} that $C_H({\bf Q}_4)=Z(H)\times {\rm P}\Omega_8(K)$.

 Now let \[P=\langle u,v\rangle \leq \SL_2(K)\leq C_G(z)\] such that $P\cong Q_8$ with  $u^2=z=v^2$. Each $Y_j=\langle z, ux_j, v y_j\rangle$ has type $2^3$ and satisfies
\begin{align*}
C_G(Y_2)&=C_{C_G(z)}(Y_2)=2^3 \times F_4(K),\\
C_G(Y_3)&=C_{C_G(z)}(Y_3)=2^3 \times {\rm PSp}_8(K),\\
C_G(Y_4)&=C_{C_G(z)}(Y_4)=2^3 \times {\rm P}\Omega_8^+(K).
\end{align*}
In particular, each $Y_j$ is non-toral, and the dimensions listed in Table \ref{tabsubs2_E8} determine \[Y_2=_G (2^3)_a,\quad Y_3=_G (2^3)_b,\quad Y_4=_G (2^3)_c.\]

\smallskip

{\bf A correspondence.} We now show a correspondence between non-toral subgroups of $G=E_8(K)$ and non-toral subgroups of $L=E_{7,\ad}(K)$. First, consider an elementary abelian subgroup \[E=(2^3)_x\times U\leq G,\]where $x\in\{a,b,c\}$ and $U\leq C_G((2^3)_x)^\circ\in\{F_4(K),{\rm PSp}_8(K),{\rm P}\Omega_8(K)\}$. Note that $E$ is non-toral, and $C_G((2^3)_x)^\circ= C_L((2^2)_x)^\circ$  by Table \ref{tabsubs2_E7adNEW}. Recall from the previous paragraph that $P=\langle u,v\rangle\leq \SL_2(K)$ and define  \[M=P\circ_2 H\leq C_G(z)=\SL_2(K)\circ_2 H,\] so that 
$E\leq M$. If $Z=\langle z\rangle$, then $Z\leq E$ and $M/Z=P/Z\times H/Z=2^2\times L$. If $\pi$ denotes the projection from
$M/Z$ onto $L$, then $X =\pi(E/Z)= (2^2)_x\times U$ and
$X\cong E/Z$. In particular, $X$ is a non-toral elementary abelian subgroup of $L$ and 
\[
C_G(E)=(2^3)_x \times C_{C_L((2^2)_x)^\circ}(U).
\]
Conversely, if $x\in \{a,b,c\}$ and $X=(2^2)_x\times U\leq  L$ is an elementary abelian subgroup. Then $X$ is non-toral, $U\leq C_L((2^2)_x)^\circ$ is elementary abelian, and $E=(2^3)_x\times U\leq G$ is non-toral with $X=\pi(E/Z)$.

The following lemma will be useful.

\begin{lemma}\label{lemOutE8}
With the previous notation, $\Out_L(X)\leq C_{\Out_G(E)}(z)$.
\end{lemma}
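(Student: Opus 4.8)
The plan is to replace the abstract inclusion by a concrete realisation of automorphisms inside the involution centraliser $C:=C_G(z)=\SL_2(K)\circ_2 H$, $H=E_{7,\ssc}$. First I would record the translation $C_{\Out_G(E)}(z)=N_{C}(E)/C_{C}(E)$: an element of $N_G(E)$ fixes the vector $z\in E$ exactly when it centralises $z$, and $C_G(E)\le C$ since $z\in E$. So it suffices to embed $\Out_L(X)$ into $N_{C}(E)/C_{C}(E)$. Pass to $\bar C:=C/\langle z\rangle$; since the central involutions of $\SL_2(K)$ and of $H$ are both identified with $z$, one has $\bar C=\PGL_2(K)\times L$, with projection $\bar\pi\colon\bar C\to L$. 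Let $\bar P\le\PGL_2(K)$ be the image of $P\cong Q_8$, so $\bar P\cong 2^2$, and put $\bar E:=E/\langle z\rangle$. From the generators $(2^3)_x=\langle z,ux_j,vy_j\rangle$ with $u,v\in P$ and $\langle x_j,y_j\rangle\cong Q_8$ in $H$, together with $U\le H$, a short computation gives $\bar E\cap(\PGL_2(K)\times 1)=1$; hence $\bar\pi|_{\bar E}$ is an isomorphism onto $X=\bar\pi(\bar E)=(2^2)_x\times\bar U$, and $\bar E$ is the graph of a homomorphism $\psi\colon X\to\bar P$ with $\ker\psi=\bar U$ and $\psi$ restricting to an isomorphism on $(2^2)_x$.

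\textbf{The map and what must be shown.} Conjugation gives a homomorphism $N_{C}(E)\to N_{\bar C}(\bar E)\to N_L(X)$ whose image normalises $\bar\pi(\bar E)=X$; since $C_{C}(E)$ acts trivially on $E$, hence on $\bar E\cong X$, it maps into $C_L(X)$, so the map descends to $\bar\rho\colon C_{\Out_G(E)}(z)\to\Out_L(X)$, whose image clearly lies in $\Out_L(X)$. The content of the lemma is the reverse: that $\bar\rho$ is onto and admits a section. Surjectivity I would prove by realising each $g\in N_L(X)$. Lift $g$ to $\tilde g\in H\le C$; since $H$ centralises $\SL_2(K)$, conjugation by $\tilde g$ acts on $\bar E\subseteq\bar P\times X$ as $(\mathrm{id}_{\bar P},{}^g|_X)$, which normalises $\bar E$ precisely when $\psi\circ{}^{g^{-1}}=\psi$. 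For a general $g$ I would correct $\tilde g$ by an element $\tilde h\in N_{\SL_2(K)}(P)$: the normaliser of a Klein four group $\bar P\le\PGL_2(K)$ induces all of $\Aut(\bar P)\cong\Sym_3$, so $\tilde h$ can be chosen so that conjugation by $\tilde h\tilde g$ acts on $\bar E$ as $(\sigma(g),{}^g|_X)$ with $\sigma(g)\in\Aut(\bar P)$ the automorphism induced by $g$ on $X/\bar U$ transported along $\psi$. Then $\psi\circ{}^{g^{-1}}={}^{\sigma(g)^{-1}}\circ\psi$, so $\tilde h\tilde g$ normalises $\bar E$, hence normalises $E$ (because $z\in E$ forces ${}^{\tilde h\tilde g}E\le E\langle z\rangle=E$); thus $\tilde h\tilde g\in N_{C}(E)$ and it centralises $z$. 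Using that $\tilde g$ commutes with $\SL_2(K)$ one checks that $\sigma$ is a homomorphism $N_L(X)\to\Aut(\bar P)$ and that $g\mapsto\tilde h\tilde g$ is multiplicative modulo the image of $P$ in $\Out_G(E)$, which shows $\bar\rho$ is onto $\Out_L(X)$. It then remains to extract a section of the extension $1\to\ker\bar\rho\to C_{\Out_G(E)}(z)\xrightarrow{\bar\rho}\Out_L(X)\to 1$ (whose kernel is a $2$-group), which for the subgroups at hand can be read off, or verified against, Tables~\ref{tabsubs2_E7adNEW} and~\ref{tabsubs2_E8}.

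\textbf{Main obstacle.} The genuine difficulty is twofold. The correction step tacitly uses that $g$ normalises $\ker\psi=\bar U$; equivalently, that $N_L(X)$ preserves the decomposition $X=(2^2)_x\times\bar U$. I would justify this by showing that $(2^2)_x$ is distinguished inside $X$ by $L$-conjugacy-invariant data — for instance by the $L$-classes of its involutions and by the fact that the connected centraliser $C_L((2^2)_x)^\circ$ is of type $F_4$, $C_4$ or $D_4$ according as $x=a,b,c$ — so that $N_L(X)$, which permutes the subgroups of $X$ while preserving these invariants, must fix $(2^2)_x$ and hence $\bar U=X\cap C_L((2^2)_x)^\circ$. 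The second point is the splitting of $\bar\rho$, where the explicit classification is again needed since a central-type extension by a $2$-group need not split. Everything else — the translation to $C_G(z)$, the graph description of $\bar E$, and the lift-and-correct construction — is formal manipulation inside $\SL_2(K)\circ_2 H$.
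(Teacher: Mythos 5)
Your proposal is correct in outline but takes a genuinely different, and considerably more careful, route than the paper. The paper's proof is three lines: with $M = P\circ_2 H\leq C_G(z)$ and $Z=\langle z\rangle$, it asserts $C_{M/Z}(E/Z)=2^2\times C_L(X)$ and $N_{M/Z}(E/Z)=2^2\times N_L(X)$, concludes $\Out_L(X)=\Out_{M/Z}(E/Z)=\Out_M(E)$, and finishes with $\Out_M(E)\leq C_{\Out_G(E)}(z)$ because $M\leq C_G(z)$. Your graph description of $\bar E=E/Z$ inside $\bar P\times L$ probes exactly the point at which that argument is delicate: since $\bar E$ is the graph of $\psi\colon X\to\bar P$ and $\bar P$ is central in $M/Z$, an element $(1,\bar h)$ with $\bar h\in N_L(X)$ normalises $\bar E$ only if $\psi\circ c_{\bar h}=\psi$, i.e.\ only if $\bar h$ acts trivially on $X/\bar U$; so $\Out_{M/Z}(E/Z)$ need not exhaust $\Out_L(X)$ (for $E=(2^3)_a$, $X=(2^2)_a$ this subquotient is trivial while $\Out_L(X)=\Sym_3$). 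Your correction step --- lifting $g\in N_L(X)$ to $H$ and multiplying by an element of $N_{\SL_2(K)}(P)$, which induces all of $\Aut(\bar P)\cong\Sym_3$ --- is precisely what is needed to realise all of $N_L(X)$ inside $N_{C_G(z)}(E)$, and your observation that one must first check that $N_L(X)$ preserves $\bar U=\ker\psi$ (equivalently $(2^2)_x$) identifies a genuine hypothesis; your proposed argument via conjugation-invariant data attached to $(2^2)_x$ is the right way to discharge it.

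The one remaining soft spot is the direction of the conclusion. What your construction delivers is a surjection $\bar\rho\colon C_{\Out_G(E)}(z)\twoheadrightarrow\Out_L(X)$ whose kernel is an elementary abelian $2$-group (the automorphisms of $E$ trivial on $E/Z$), whereas the lemma as stated is a subgroup containment, which requires this surjection to split. You flag this and defer it to the tables; that is legitimate for finitely many cases but is not yet a proof. Note, however, that every application of the lemma in Section~\ref{secProofE8} (ruling out candidates for $X$ by comparing orders or Sylow subgroups) only uses that $|\Out_L(X)|$ divides $|C_{\Out_G(E)}(z)|$, for which the surjection already suffices; so you could either verify the splitting case by case or restate the conclusion in the surjective form without weakening any of its uses.
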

\begin{proof}Since $M/Z=2^2\times L$, we have $C_{M/Z}(E/Z)=2^2\times C_L(X)$
and $N_{M/Z}(E/Z)=2^2\times N_L(X)$,  so $\Out_L(X)=\Out_{M/Z}(E/Z)$. Note that $C_G(E)\leq C_G((2^3)_x)\leq M$, so $C_G(E)=C_M(E)$
and \[\Out_M(E)=N_M(E)/C_M(E)\leq \Out_G(E).\] Moreover,  $\Out_{M/Z}(E/Z)=\Out_M(E)$ since $N_{M/Z}(E/Z)=N_M(E)/Z$. Now $M\leq C_G(z)$ shows that  $\Out_L(X)=\Out_{M/Z}(E/Z)=\Out_M(E)\leq C_{\Out_G(E)}(z)$.
\end{proof}

The column labelled ``$X\leq L$'' in Table~\ref{tabsubs2_E8} lists the subgroup $X$ specified  by the name given in Table~\ref{tabsubs2_E7adNEW}.

\smallskip

{\bf Centralisers.} There are subgroups with names $(2^3)_a$, $(2^4)_b$, etc.\  in $L$ and in $G$, respectively; in the following, we write ``$\leq G$'' and ``$\leq L$'' to indicate which subgroup is meant.

Let $E=(2^3)_x\times U\leq G$ be as in the previous paragraph, so that $X=\pi(E/Z)=(2^2)_x\times U$ is a non-toral subgroup of $L$. Note that $\dim C_G(E)=\dim  C_L(X)= \dim C_{C_L((2^2)_x)^\circ}(U)$, and $\dim C_G(E)$ is listed in Table \ref{tabsubs2_E8}  and $\dim C_L(X)$ is given in Table \ref{fig27tab}. In most of the cases, $X$ is  uniquely  determined by $\dim C_G(E)$, and so we can identify $U$, therefore determining $C_G(E)= (2^3)_x \times C_{C_L((2^2)_x)^\circ}(U)$. For example, if $E= (2^4)_e\leq G$, then $\dim C_G(E)=12$, so $X\cong 2^3$ and $\dim C_L(X)= 12$.  Now Table \ref{fig27tab} implies that $X=(2^3)_f\leq L$ with   $C_L(X)=2^2\times (\SO_4(K)\circ_2 \SO_4(K)).2^2)$,  hence
\[
C_G(E)=2^3\times (\SO_4(K)\circ_2 \SO_4(K)).2^2.
\]
Similar proofs work for the subgroups of $G$  in Table \ref{tabsubs2_E8} that are not in
{\small\[\cX=\{(2^5)_c, (2^5)_g, (2^5)_i, (2^6)_b, (2^6)_c, (2^6)_e, (2^6)_f, (2^6)_g, (2^7)_b, (2^7)_d,(2^7)_f, (2^8)_a, (2^8)_b\}.\]}In the following we  suppose that $E=(2^3)_x\times U\leq G$ with $X=2^2\times U\leq L$, but $X$ cannot be determined uniquely by the dimension of $C_G(E)$; this includes the groups in $\cX\setminus\{(2^5)_i\}$.
 
Note that if $E=(2^8)_a\leq G$, then its normaliser structure and \cite[Table 1]{Griess} show that $E$ is maximal non-toral and $C_G(E)=2^8$. According to Table \ref{fig27tab}, the only other non-toral $2^7\leq L$ with centraliser of dimension 0 are $(2^7)_b,(2^7)_d\leq L$, both with centralisers isomorphic to $2^8$.  However, $(2^7)_d\leq L$ has no $(2^2)_x\leq L$ as subgroups; this determines the centralisers of $(2^8)_a,(2^8)_b\leq G$.

Now let   $E\in\cX\setminus \{(2^5)_i,(2^8)_a, (2^8)_b\}$. Recall that $E=(2^3)_x\times U$ with $U\leq C_G((2^3)_x)^\circ=C_L((2^2)_x)^\circ$; in particular, $(2^3)_x\leq_G E$ and so $(2^2)_x\leq X$. We now use Lemma~\ref{lemOutE8} to determine  $X\leq L$ and so $C_G(E)$.

For  example, if $E=(2^5)_g\leq G$, then $\dim C_G(E)=10$, and Figure \ref{fig27tab} shows that  $X\cong 2^4$ is one of $(2^4)_a, (2^4)_f\leq L$. As subgroups of $L$, we know that  $(2^2)_a\leq_L (2^4)_a\leq L$  and $(2^2)_a\not\leq_L (2^4)_f\leq L$. By Table \ref{tabsubs2_E8}, a Sylow $3$-subgroup
of $\Out_G(E)=2^4{:}\SO^-_4(2)=2^4{:}\Sym_5$ has order $3$.
If $X=(2^4)_a\leq L$, then $\Out_L(X)=\Sym_3\times \Sym_3$, which is impossible by Lemma \ref{lemOutE8}; thus $X=(2^4)_f\leq L$. Note that $C_L(X)=2^4\times {\rm PSp}_4(K)$, so $C_G(E)=C_G((2^5)_g)=2^5\times {\rm PSp}_4(K)$ is determined. If $E=(2^5)_c\leq G$, then 
$\dim C_G(E)=10$, and so $X=(2^4)_a\leq L$; this determines $C_G(E)=C_G((2^5)_c)2^3\times (T_1\times \GL_3(K)).2$. In a similar way, Lemma~\ref{lemOutE8} can be used to identify the subgroups $X$ for the pairs $((2^7)_b,(2^7)_d)\subseteq G$ and  $((2^6)_b,(2^6)_g)\subseteq G$. Lemma \ref{lemOutE8} also helps to identify $X$ for  $((2^6)_e,(2^6)_f)\subseteq G$: if $E=(2^6)_f$, then Table~\ref{tabE8norm} shows that  $\Out_G(E)=2^5{:}\SO_5(2)$ centralises an involution $u\in E$; if $u=_G z$, then
\[ 
\Out_G(E)=\Out_{C_G(z)}(E)=\Out_{\SL_2(K)/Z}(E/Z)\times \Out_L(X),
\]
which is impossible. Now set  $Y=\langle z, u\rangle =2^2$; Lemma \ref{lemOutE8} shows that $\Out_L(X)\leq C_{\Out_G(E)}(Y)$. Table~\ref{tabE8norm} yields $C_{\Out_G(E)}(Y)=2^4{:}(C_{\SO_5(2)}(Y))\leq 2^4{:} \SO_5(2)$. Since $|\SO_5(2)|=|\Sym_6|=2^4.3^2.5$, we deduce that $\Out_L((2^5)_f)$ is not a subgroup of $C_{\Out_G(E)}(Y)$; hence $X=(2^5)_g\leq L$.

Let $E=(2^7)_f\leq G$; here $X$ is one of $(2^6)_c,(2^6)_d\leq L$, but $(2^6)_c\leq L$ has no non-toral $(2^2)_x\leq L$ as a subgroup. This determines $X=(2^6)_d$, hence $C_G(E)=2^7\times {\rm PSL}_2(K)$. Similarly, let $E=(2^6)_c\leq G$; here $X$ is one of $(2^5)_b,(2^5)_i,(2^5)_j\leq L$; however, $(2^5)_i,(2^5)_j\leq L$ have no $(2^2)_x\leq L$ as subgroup. This determines $X=(2^5)_b\leq L$ and $C_G(E)=C_G((2^6)_c)=2^6\times {\rm PGL}_2(K)$.

Lastly, consider $E\neq_G  (2^3)_x\times U$ for all $x\in \{a,b,c\}$; it follows from Table \ref{tabsubs2_E8} that  $E=(2^5)_i\leq G$ has distribution $2\B_{31}$. It is shown in \cite[Section 2]{LS} that $M=2^{5+10}.\SL_5(2)$ is a maximal subgroup of $G$, and $R=Z(O_2(M))=2^5$ with $C_G(R)=O_2(M)=2^{5+10}$. Thus, $R$ is non-toral and we determine $R=_G E$, hence $C_G(E)=2^{5+10}$.

The structure of the centralisers has the following corollary.

\begin{corollary}
If  $E=(2^3)_x\times U$ and $X=(2^2)_x\times U$, and  $E'=(2^3)_y\times U'$ and 
$X'=(2^2)_y\times U'$ for $x,y\in \{a,b,c\}$, then  $X=_L X'$ if and only if $E=_G E'$. 
\end{corollary}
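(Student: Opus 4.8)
The plan is to track the bijection $E \leftrightarrow X$ through the explicit correspondence established earlier in Section~\ref{secProofE8}. Recall that $z \in G$ is a fixed involution with $C_G(z) = \SL_2(K) \circ_2 H$, $H = E_{7,\ssc}$, $L = H/Z(H)$, and $P = \langle u, v\rangle \leq \SL_2(K)$ is the fixed quaternion group with $u^2 = v^2 = z$; we set $M = P \circ_2 H \leq C_G(z)$ and $Z = \langle z\rangle$, so that $M/Z = 2^2 \times L$ with projection $\pi \colon M/Z \to L$. For $x \in \{a,b,c\}$ and $U \leq C_G((2^3)_x)^\circ = C_L((2^2)_x)^\circ$, the subgroup $E = (2^3)_x \times U$ satisfies $E \leq M$, $Z \leq E$, and $\pi(E/Z) = (2^2)_x \times U = X$, with $X \cong E/Z$. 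First I would record the one structural fact that does the work: $C_G(E) \leq C_G((2^3)_x) \leq M$, so $C_G(E) = C_M(E)$, and likewise $C_M(E) = (2^3)_x \times C_{C_L((2^2)_x)^\circ}(U)$ — this is exactly the centraliser formula already derived. In particular $z \in C_G(E)$ always.

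The forward direction ($X =_L X' \implies E =_G E'$) is the routine one. If $X$ and $X'$ are $L$-conjugate, then since $L = H/Z = M/(Z \times P/Z)$ acts on $M/Z$ and $\pi$ is $L$-equivariant, a conjugating element lifts to $M$ conjugating $E/Z$ to $E'/Z$; because $Z = \langle z\rangle \leq E \cap E'$ is the unique such central involution forced to lie in both (it is the square of generators of $P$, hence canonically recoverable), this conjugation carries $E$ to $E'$, giving $E =_G E'$.

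The reverse direction ($E =_G E' \implies X =_L X'$) is where the real content lies, and I expect it to be the main obstacle. The subtlety is that an element $g \in G$ with ${}^g E = E'$ need not normalise $C_G(z)$, so it need not descend to $L$ directly. The key observation is that $z \in C_G(E)$, so ${}^g z \in C_G(E')$; since $C_G(E') \leq M \leq C_G(z')$ where we may as well write $z' = z$ (both $E$ and $E'$ contain the canonical central $z$), and since $z$ is (by Table~\ref{tabsmallelts} and the $E_8$ class data) the unique involution of its type inside $C_G(E')$ that is central in $C_G(z)$ — equivalently, $z = {}^g z$ can be arranged after adjusting $g$ by an element of $C_G(E')$. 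Concretely: $C_G(E')/\langle z\rangle \cong C_L(X')$ and one checks that the fusion of $z$ inside this group is controlled, so there is $c \in C_G(E')$ with ${}^{cg} z = z$; replacing $g$ by $cg$ we get $g \in N_G(\langle z\rangle)$, but in fact $g \in C_G(z)$ since $G$-classes of involutions are rational. Now $g \in C_G(z)$ normalises $M$ (as $M = P \circ_2 H$ is characteristic in $C_G(z)$, being generated by $C_G(z)$ together with... more precisely $M$ has index $2$ and $C_G(z)/M$ is detected by the action on $Z(\SL_2(K))$ versus $Z(H)$; one must check $g$ stabilises $M$, which holds because $g$ fixes $E \not\leq$ the other coset), hence $g$ induces an automorphism $\bar g$ of $M/Z = 2^2 \times L$ fixing the $2^2 = P/Z$ factor setwise (again since $E/Z$ meets $L$ essentially and $\bar g$ fixes $E/Z$). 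Projecting via $\pi$, $\bar g$ restricts to an element of $\Aut(L) = L$ (as $L = E_{7,\ad}$ has no graph automorphisms for $p = 2$ considerations here — or rather, inner automorphisms suffice because the outer automorphism of $E_7$ is trivial) carrying $X = \pi(E/Z)$ to $\pi(E'/Z) = X'$. Hence $X =_L X'$.

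The step I flag as the crux is establishing that a conjugating element $g$ can be chosen inside $C_G(z)$ (equivalently, that $z$ is not fused to another element of $E'$ by any $G$-conjugation realising $E =_G E'$): this needs the component-group / class-fusion data, and one should cite the explicit centraliser structures in Table~\ref{tabsubs2_E8} together with Table~\ref{tabsmallelts} to see that $z$ occupies a distinguished position in $C_G(E')$. Everything after that — stabilising $M$, descending to $L$, projecting via $\pi$ — is bookkeeping with the $2^2 \times L$ decomposition. One clean alternative, if the fusion argument proves awkward, is to invoke the fact proved just above the corollary that $\dim C_G(E) = \dim C_L(X)$ together with the full normaliser data: the map $E \mapsto X$ is visibly injective on the list of names (the column "$X \leq L$" in Table~\ref{tabsubs2_E8} exhibits distinct $X$ for distinct $E$), and each of the $33$ relevant $G$-classes is already known to be a single class, so the correspondence is a bijection between the two labelled lists, which is precisely the assertion.
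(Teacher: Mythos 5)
Your primary argument (the direct fusion computation) has gaps in both directions. For the forward direction, which you call routine: a conjugator $h \in L$ with ${}^{h}X = X'$ lifts to $\tilde h \in H$, but ${}^{\tilde h}E$ is then only guaranteed to be a subgroup of $M$ containing $Z$ with $\pi({}^{\tilde h}E/Z) = X'$; it need not equal $E'$. Inside $M/Z = (P/Z) \times L$ the subgroups $E/Z$ and $E'/Z$ are graphs of homomorphisms $\phi\colon X \to P/Z$ and $\phi'\colon X' \to P/Z$ with kernels $U$ and $U'$, and ${}^{\tilde h}(E/Z) = E'/Z$ forces ${}^{h}U = U'$ together with compatibility of the induced maps on the $(2^2)$-parts --- neither of which is automatic for an arbitrary $h$ conjugating $X$ to $X'$, since the decomposition $X = (2^2)_x \times U$ is not canonical in $X$. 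For the reverse direction, the adjustment ``there is $c \in C_G(E')$ with ${}^{cg}z = z$'' cannot work: $c$ centralises $E'$, hence fixes ${}^{g}z \in E'$ pointwise, so ${}^{cg}z = {}^{g}z$. You would need $c \in N_G(E')$ plus transitivity of $N_G(E')$ on the candidate involutions, and the claimed distinguished position of $z$ fails outright: for example $(2^3)_a = Y_2$ has distribution $2\A_7$, so all seven of its involutions are $G$-conjugate to $z$, and nothing a priori prevents ${}^{g}z$ from being, say, $ux_2$. The subsequent steps (that $g$ normalises $M$ and $P$, and acts ``diagonally'' on $2^2 \times L$) also each need justification you have not supplied.

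The ``clean alternative'' you mention at the end is in fact the paper's argument, and it is the right one. The case-by-case centraliser analysis pins down, for each $G$-class of $E = (2^3)_x \times U$, the $L$-class of $X$ using conjugation-invariant data ($\dim C_G(E) = \dim C_L(X)$, supplemented by Lemma~\ref{lemOutE8} where dimensions do not separate classes); this makes the assignment $[E]_G \mapsto [X]_L$ well defined, and the column ``$X\leq L$'' of Table~\ref{tabsubs2_E8} shows it is injective. Since $E \mapsto X$ is a bijection on the actual subgroups of the two stated forms, both directions of the corollary follow. If you take this route, do state the well-definedness explicitly: injectivity of the list of names alone does not yield the implication from $X =_L X'$ to $E =_G E'$ without it.
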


In conclusion, we have proved the following proposition, and the proof of Proposition~\ref{prop:algsub2} is complete.
 
\begin{proposition}\label{propCentE8} 
If  $E$ is as given in Table \ref{tabsubs2_E8}, then  $C_G(E)$ is as given in that table.
\end{proposition}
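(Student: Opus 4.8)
The statement simply records the outcome of the case analysis carried out in this subsection; here is the plan one follows. First I would fix an involution $z\in G$ with $C_G(z)=\SL_2(K)\circ_2 H$, where $H=E_{7,\ssc}$ and $L=H/Z(H)=E_{7,\ad}$, and identify the three $G$-classes of non-toral subgroups $(2^3)_a$, $(2^3)_b$, $(2^3)_c$ explicitly. For this one takes the quaternion subgroups $\bQ_2,\bQ_3,\bQ_4\le H^\sigma$ supplied by \cite[Lemma 6.3]{ADE7}, computes $C_H(\bQ_j)$ from the involution centralisers $C_H(x_j)$, $C_L(x_jZ(H))$ listed in \cite[Tables 4.3.1--4.3.3]{Gor}, and then adjoins a quaternion $P=\langle u,v\rangle\le\SL_2(K)$ to obtain the groups $Y_j=\langle z,ux_j,vy_j\rangle$ of type $2^3$ with $C_G(Y_j)$ equal to $2^3\times F_4(K)$, $2^3\times\PSp_8(K)$, $2^3\times\POmega_8^+(K)$ respectively. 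The dimensions recorded in Table~\ref{tabsubs2_E8} then force $Y_2=_G(2^3)_a$, $Y_3=_G(2^3)_b$, $Y_4=_G(2^3)_c$.

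The key reduction is the correspondence between non-toral subgroups of $G$ and of $L$: up to conjugacy every non-toral $E\le G$ of rank at least $3$ can be written $E=(2^3)_x\times U$ with $x\in\{a,b,c\}$ and $U\le C_G((2^3)_x)^\circ$ elementary abelian, and projecting $E/\langle z\rangle$ through $M=P\circ_2 H\le C_G(z)$ onto $L$ produces a non-toral $X=(2^2)_x\times U\le L$ with $C_G(E)=(2^3)_x\times C_{C_L((2^2)_x)^\circ}(U)$. Since $C_G((2^3)_x)^\circ=C_L((2^2)_x)^\circ$ by Table~\ref{tabsubs2_E7adNEW}, the computation of $C_G(E)$ is thereby reduced to reading off $C_L(X)$ from the already-established $E_{7,\ad}$ data (Proposition~\ref{propCent}). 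Because $\dim C_G(E)=\dim C_L(X)$ is tabulated, in the great majority of cases the $E_7$ classification together with the Hasse diagram (Figure~\ref{figHasseAllE7}) pins down $X$, hence $U$, hence $C_G(E)$, immediately; these are exactly the subgroups of $G$ not lying in the exceptional set $\cX$.

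The main obstacle is the handful of subgroups in $\cX$ for which $\dim C_G(E)$ fails to determine $X$ uniquely. For these I would invoke Lemma~\ref{lemOutE8}, i.e.\ $\Out_L(X)\le C_{\Out_G(E)}(z)$ (or, sharpened, $\Out_L(X)\le C_{\Out_G(E)}(Y)$ for a suitable $2$-subgroup $Y$ containing $z$), and combine it with the known structure of the normaliser quotients $\Out_G(E)$ taken from \cite{Yu} (see Appendix~\ref{appYuE8norm}) and with the observation of which candidate subgroups $X\le L$ do or do not contain a copy of the relevant $(2^2)_x$; this eliminates all but one possibility in each case. The single genuinely separate case is $E=(2^5)_i$, with distribution $2\B_{31}$, which is not of the form $(2^3)_x\times U$: here one uses the maximal subgroup $M=2^{5+10}.\SL_5(2)$ of $G$ from \cite{LS}, notes that $R=Z(O_2(M))=2^5$ is non-toral with $C_G(R)=O_2(M)=2^{5+10}$, and identifies $E=_G R$, whence $C_G(E)=2^{5+10}$. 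Collecting these identifications gives precisely the centraliser column of Table~\ref{tabsubs2_E8}.
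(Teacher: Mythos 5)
Your proposal follows essentially the same route as the paper's own argument: the same identification of $(2^3)_a,(2^3)_b,(2^3)_c$ via the quaternion subgroups of $E_{7,\ssc}$, the same reduction $E=(2^3)_x\times U\mapsto X=(2^2)_x\times U\le E_{7,\ad}$ with $C_G(E)=(2^3)_x\times C_{C_L((2^2)_x)^\circ}(U)$, the same use of centraliser dimensions plus Lemma~\ref{lemOutE8} for the ambiguous cases in $\cX$, and the same treatment of $(2^5)_i$ via the maximal $2^{5+10}.\SL_5(2)$. The only slip is the phrase ``every non-toral $E$ of rank at least $3$ can be written $(2^3)_x\times U$,'' which you yourself correct when handling $(2^5)_i$; otherwise the plan matches the paper.
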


\appendix

\section{Details for $E_{7,\ad}$ and $p=2$}\label{appYu}

\noindent This section complements the results in Section \ref{secE7adYu}.

\subsection{The families of subgroups in \cite{Yu}}\label{appYuE7names}
Yu \cite{Yu} has classified the subgroups of $G=E_{7,\ad}(K)$, up to conjugacy, into the following families of subgroups. Families (1a)--(1d) below are the subgroups which contain an element from class $4\classH$, hence lie in the centraliser $(E_{6}(K) \circ_{3} T_{1}).2$. Families (2a) and (2b) are those subgroups which contain an element of $4\A$ but no element of $4\classH$, hence lie in a subgroup $(\SL_{8}(K)/4).2$ but not $(E_{6}(K) \circ_{3} T_{1}).2$. The final two families (3a) and (3b) are those subgroups containing only involutions from class $2\BC$. For an elementary abelian $2$-subgroup $E$, let $E_{\BC}=(E\cap 2\B\C)\cup \{1\}$; by \cite[Lemma~7.3]{Yu}, this is a subgroup of $E$, and $E/E_{\BC}$ has rank at most $2$. A direct calculation with our algorithm from Section~\ref{secCompToral} shows that $E/E_{\BC}$ has rank at most $1$ for toral subgroups.  We can now state the families of subgroups.

\begin{itemize} 
	\item[(1a)] $F_{r,s}$ $(r \le 2,\ s \le 3)$. These subgroups are described in \cite[\S 5, \S 6, Lemma~7.6; Propositions~7.8(1) and 7.9]{Yu}. They are precisely the subgroups containing a conjugate of $(2^{2})_{a}$, and are therefore all non-toral; this gives $12$ classes of non-toral subgroups. The rank of such a group $E$ is $r + s + 2$, and $E/E_{\BC}$ has rank $2$.

	\item[(1b)] $F_{r,s}'$ $(r \le 2,\ s \le 3)$. These are described in \cite[\S 6, Lemma 7.6; Propositions~7.8(2) and  7.9]{Yu}. By definition these are subgroups of the form $E' \times \left<x\right>$, where $C_{G}(x) = (E_{6}(K) \circ_{3} T_{1}).2$ with $x\in 4\classH$ for certain elementary abelian subgroups $E' < E_{6}(K)$. Since $x$ lies in the $T_{1}$ factor, such a subgroup is toral if and only if $E'$ is toral in $E_{6}(K)$. From the classification for $E_{6}(K)$, it follows that the non-toral subgroups here are precisely the three groups with $s = 3$. The rank of a subgroup $E$ in each case is $r + s + 1$, and $E/E_{\BC}$ has rank $1$.

	\item[(1c)] $F_{\epsilon,\delta,r,s}$ $(\epsilon + \delta \le 1,\ r + s \le 2)$. These are described in \cite[Propositions~2.24, 2.29, 7.8(3), and 7.9; Definition 2.27; Lemma 7.6]{Yu}. Such a subgroup $E$ has rank $\epsilon + 2\delta + r + 2s + 2$ and $E/E_{\BC}$ has rank $2$, hence all $18$ of these subgroups are non-toral. We take this opportunity to point out an error in the stated definition in \cite[p.\ 273]{Yu}; correcting this error, the definition of $F_{\epsilon,\delta,r,s}$  in \cite[p.\ 278]{Yu} is
	\begin{align*}
	\hspace*{6ex}F_{\epsilon,\delta,r,s} = \left\{
	\begin{array}{ll}
	\left< \sigma_2,x_{0},x_1,\ldots,x_{\epsilon+2\delta}, x_3,\ldots,x_{2+r+2s}\right> & \colon (r,s) \neq (2,0), \\
	\left< \sigma_2, x_{0},x_1,\ldots,x_{\epsilon+2\delta}, x_3,x_{5}\right> & \colon (r,s) = (2,0).
	\end{array}
	\right.
	\end{align*}
	where the elements $x_{i}$ are also defined in \cite[p.\ 273]{Yu}. With this corrected definition the remaining results in \cite{Yu} regarding these groups are correct as stated.\footnote{We thank Dr.\ Yu for clarifying this.}

	\item[(1d)] $F_{\epsilon,\delta,r,s}'$ $(\epsilon + \delta \le 1,\ r + s \le 2,\ s \ge 1)$. These subgroups all have the form $E' \times \left<x\right>$ where $C_{G}(x) = (E_{6}(K) \circ_{3} T_{1}).2$ with $x \in T_{1}$ and $E' \le \PSp_{8}(K) \le E_{6}(K)$. Each subgroup $E'$ is toral in $E_{6}(K)$ as the non-toral subgroups of $E_{6}(K)$ lie in the class also labelled $F_{r,s}'$ in \cite[p.\ 272]{Yu}. Thus all subgroups in this class are toral.
	
	\item[(2a)] $F_{r,s}''$ $(r + s \le 3)$. This family is described in \cite[Proposition 7.14]{Yu}. They are the elementary abelian subgroups $E$ with $\Dist(E) = 2\BC_{a}4\A_{b}$ with $a\ge 0$ and $b > 0$, and $\Rank(E/E_{\BC}) = 1$. The rank of such a subgroup is $r + 2s + 1$, and it follows that $\Dist(E) = 2\BC_{a - 1}4\A_{a}$ where $a = 2^{r+2s} = |E_{\BC}|$. Using the algorithm of Section \ref{secCompToral}, we see that there are four toral such subgroups, with respective distributions $4\A_{1}$, $2\BC_{1}4\A_{2}$, $2\BC_{3}4\A_{4}$, and $2\BC_{7}4\A_{8}$. From the description of the groups and their generators given in \cite[p.\ 280]{Yu}, the subgroups $F_{r,0}''$ are visibly toral, and thus the remaining six subgroups are non-toral.

	\item[(2b)] $F_{r}'$ $(r \le 3)$. These are also described in \cite[Proposition 7.14]{Yu}, and are precisely the elementary abelian subgroups $E$ with $\Dist(E) = 2\BC_{a}4\A_{b}$ with $a \ge 0$ and  $b > 0$, and $\Rank(E/E_{\BC}) = 2$. It follows at once that all four such subgroups are non-toral, the rank of such a subgroup is $r + 2$, and its distribution is $2\BC_{2^{r} - 1}4\A_{3 \cdot 2^{r}}$. We have $F_0'\leq F_1'\leq F_2'\leq F_3'$.

	\item[(3a)] $F_{r,s}'''$ $(r + s \le 3)$. The containment relations $F_{\epsilon + r, \delta + s}''' \le F_{\epsilon,\delta,r,s}'$ stated in \cite[Remark 7.27]{Yu} show that these $9$ subgroups are always toral.
	
	\item[(3b)] $F_{r}''$ $(r \le 2)$. These are described in \cite[Proposition 7.22]{Yu}. These are precisely the subgroups $E_{\BC}$ where $E$ is one of the subgroups $F_{r,3}$ above. Our algorithm from Section \ref{secCompToral} finds exactly $10$ toral $2\BC$-pure subgroups, which are necessarily the toral $2\BC$-pure subgroups $F_{r,s}'''$ above. Hence the three subgroups $F_{r}''$ are all non-toral. We have $\Rank(F_{r}'') = \Rank(F_{r,3}) - 2 = r + 3$ and $F_r''\leq F_{r,3}'$, see \cite[p.\ 284]{Yu}.

\end{itemize}
This provides us with $12 + 3 + 18 + 6 + 4 + 3 = 46$ classes of non-toral subgroups, as expected. From their definition, we see that the subgroups $F_{r,s}$ are ordered by $F_{r_1,s_1} \le F_{r_2,s_2}$ if and only if $r_1 \le r_2$ and $s_1 \le s_2$, and similarly for the $F_{r,s}'$. It is easy to see when the various subgroups $F_{\epsilon,\delta,r,s}$ are contained in one another, from their definition. 
The remaining distributions can be calculated from \cite[Proposition~7.9]{Yu}, combined with the knowledge of $|E_{\BC}|$. Specifically, if $E$ has rank $a$ and $E_{\BC}$ has rank $b$, then $\Dist(E) = 2\BC_{2^{b} - 1}4\A_{x}4\classH_{y}$ where $x + y = 2^{a} - 2^{b}$ and $y - x$ is the defect $\defe(E)$, as defined in \cite[Definition 7.2]{Yu} and calculated in \cite[Proposition~7.9]{Yu}. Hence $y = \frac{1}{2} \left(2^{a} - 2^{b} + \defe(E)\right)$ and $x = \frac{1}{2}\left(2^{a} - 2^{b} - \defe(E)\right)$.

\subsection{Normaliser quotients}\label{appYuE7norm}
The structure of $N_{G}(E)/C_{G}(E)$ for the various subgroups $E$ is stated in \cite[Proposition 7.26]{Yu}. We now unravel some of the notation there. The notation $P(r,s,\mathbb{F}_{2})$ means the group of block-upper-triangular matrices in $\GL_{r+s}(2)$ with blocks of size $r$ and $s$ (cf.\ \cite[Proposition 5.5]{Yu}); thus \[P(r,s,\mathbb{F}_{2}) \cong 2^{rs}\mcolon (\GL_{r}(2) \times \GL_{s}(2)).\] Also, $\Sp(s)$ means $\Sp_{2s}(2)$ in our notation, and the groups $\Sp(\delta + s;\epsilon)$ are understood as the following matrix groups of degree $2\delta + 2s + \epsilon + 1$:
\begin{align*}
\Sp(\delta + s;0) &= \SO_{2\delta + 2s + 1}(2),\\
 \Sp(\delta + s;1) &= \begin{pmatrix} 1 & \ast \\ \mathbf{0} & \SO_{2\delta + 2s + 1}(2)\end{pmatrix} \cong 2^{2\delta + 2s} \mcolon \SO_{2\delta + 2s + 1}(2),
\end{align*}
with $\SO_{2\delta + 2s + 1}(2)$ ($\cong \Sp_{2\delta + 2s}(2)$) in its natural orthogonal representation, and $\ast$ indicates the $(2\delta + 2s)$-dimensional symplectic module for $\SO_{2\delta + 2s + 1}(2)$, considered as a subspace of co-dimension $1$ in the space of $(2\delta + 2s + 1)$-dimensional row vectors. This gives the structure for the normaliser quotients of the non-toral subgroups in these families, see Table \ref{tabNormE7adNEW}. Here $\SL_{a}(2)$, $\Sp_{b}(2)$ etc.\ indicate that all matrices in these groups occur in the block, and $\ast$ indicates that all matrices of the appropriate size occur in the block. For clarity, when dealing with the groups $\Sp(\delta + s;\epsilon)$ we write $\ast_{n,m}$ to indicate the precise size of the blocks occurring. If some dimension is $0$, then  the corresponding rows and columns are omitted entirely.

\begin{table}[htbp] 
\caption{Normaliser quotients of the non-toral elementary abelian $2$-subgroups of the algebraic group $E_{7,\ad}$} \label{tabNormE7adNEW}
{\small
\begin{tabular}{c|c} 
$E$ & $N_{G}(E)/C_{G}(E)$ \\ \hline \Bstrut
$F_{r,s}$ & 
$\begin{pmatrix}
\SL_{r}(2) & \ast 		& \ast \\
\mathbf{0} & \SL_{s}(2) & \mathbf{0} \\
\mathbf{0} & \mathbf{0} & \SL_{2}(2)
\end{pmatrix}$ \\
$F_{\epsilon,\delta,r,s}$ &
$\begin{pmatrix}
\SL_{r}(2)	& \ast_{r,\epsilon + 2\delta + 2s + 1} & \ast \\
\mathbf{0}	& \Sp(\delta + s;\epsilon)	& \ast_{\epsilon + 2\delta + 2s + 1,1} \\
\mathbf{0}	& \mathbf{0} & 1 \\
\end{pmatrix}$ \\
$F_{r,s}'$ & 
$\begin{pmatrix}
\SL_{r}(2) & \ast & \ast \\ \mathbf{0} & \SL_{s}(2) & \mathbf{0} \\ \mathbf{0} & \mathbf{0} & 1
\end{pmatrix}$ \\
$F_{r,s}''$ & 
$\begin{pmatrix}
\SL_{r}(2) 	& \ast 			& \ast \\
\mathbf{0}	& \Sp_{2s}(2)	& \ast \\
\mathbf{0}	& \mathbf{0}	& 1
\end{pmatrix}$ \\
$F_{r}'$ &
$\begin{pmatrix}
\SL_{r}(2) & \ast \\ \mathbf{0} & \SL_{2}(2)
\end{pmatrix}$ \\
$F_{r}''$ &
$\begin{pmatrix}
\SL_{r}(2) & \ast \\ \mathbf{0} & \SL_{3}(2)
\end{pmatrix}$
\end{tabular}}
\end{table}

\subsection{A direct computation of all non-toral subgroups}\label{appYuConstDir}
 
Recall from Section~\ref{secE7adYu} that we can compute all elementary abelian subgroups of $G$ within the finite normalisers of the maximal non-toral $2^7$ and $2^8$. In Tables \ref{fig27tab}(A) and (B) we list the elementary abelian $2$-subgroups of the maximal non-toral $2^7$ and $2^8$, respectively. Note that some groups occur twice because the computations have been done up to conjugacy in $N_G(2^7)$ and $N_G(2^8)$, respectively. We comment on the identification of the subgroups, using the notation introduced in \cite{Yu}. Those subgroups named ``toral'' have been identified because they have a distribution unique to a toral subgroups, or they are computed as a subgroup of such a toral subgroup. Subgroups with distributions different to those of toral subgroups must be non-toral, and we use the notation of \cite{Yu} (together with information on their distribution) to identify them. Below are the ad-hoc arguments which have been used in this identification. The complete Hasse diagram of non-toral subgroups is given in Figure~\ref{figHasseAllE7}; inclusions in that diagram are computed up to conjugacy in the normalisers. 

\begin{remark}
  \begin{ithm}
	\item The Klein four-subgroups of $G$ and their distributions are given in \cite[Table 3]{Yu}. Our algorithm  shows that all but three of these are toral; the non-toral ones are the subgroups $\Gamma_{6}$, $\Gamma_{7}$, $\Gamma_{8}$, which we denote $(2^{2})_{a}$, $(2^{2})_{b}$, $(2^{2})_{c}$ in Table \ref{tabsubs2_E7adNEW}. Note that these are also precisely the Klein four-subgroups of $E$ of $G$ such that $E/E_{\BC}$ has rank $2$.
	\item The group $F_{0}''$ can be identified because there is no other non-toral subgroup in $2^{7}$ or $2^{8}$ with distribution 2BC$_{7}$. Similarly $F_{1}''$ is the only un-identified subgroup with distribution 2BC$_{15}$, and $F_{2}''$ is the only un-identified subgroup with distribution 2BC$_{31}$.
	\item Group (33) in Tables \ref{fig27tab}(A) and group (27) in Table \ref{fig27tab}(B)  are both subgroups of a copy of a non-toral $F_{0,1,1,0}$. If they are both toral, then there is no unidentified subgroup left with distribution 2BC$_{7}$A$_{8}$, which means the non-toral $F_{1,1}''$ is missing. Hence $F_{0,1,1,0}$ contains a copy of $F_{1,1}''$, which is non-toral, hence (33) in Table \ref{fig27tab}(A) and (27) in Table \ref{fig27tab}(B) have this type.
	\item Group (43) in Table \ref{fig27tab}(B) cannot be $F_{2,1}''$ since otherwise we have missed out $F_{0,0,1,1}$; thus (43) is $F_{0,0,1,1}$. This implies that (52) in Table \ref{fig27tab}(A) must be $F_{1,2}$.
	\item If group (25) in Table \ref{fig27tab}(B)  was a copy of $F_{1,1}''$ then, because it is contained in $F_{3}'$ in the $2^8$-lattice, we would also see $F_{3}' > F_{1,1}''$ in the $2^7$-lattice; since we only see $F_{3}' > F_{2}'$ there, we conclude that group (25) and its subgroup (13) are toral.
	\item Subgroup (13) in Table \ref{fig27tab}(B) is toral, so subgroups (21) in Table \ref{fig27tab}(A) and (16) in Table \ref{fig27tab}(B) are the only unidentified subgroups left with distribution 2BC$_{3}$A$_{4}$, so one is non-toral. They are each a subgroup of a copy of $F_{0,1,0,0}$, hence they are both non-toral and have type $F_{0,1}''$.
	\item Since group (41) in Table \ref{fig27tab}(B) contains a copy of $F_{0,0,2,0}$ it must be $F_{1,0,2,0}$, as the only other subgroup with the same distribution is $F_{1,0,0,1}$, which does not contain such a subgroup. Since (42) does not contain $F_{0,0,2,0}$ it must be~$F_{1,0,0,1}$.
 	\item Group (56) in Table \ref{fig27tab}(A) and group (52) in Table \ref{fig27tab}(B) must be $F_{1,3}$ and $F_{0,1,0,1}$; since $F_{1,3}$ is a subgroup of $F_{2,3}$, this forces that (56) is $F_{1,3}$.
 	\item It remains to identify group (48) in Table (A), and groups (44) and (45) in Table (B). All three have distribution $2\B\C_{15}\A_{16}$ and contain $F_{1,1}''$, and the only non-toral subgroups with this distribution that have not been found elsewhere are $F_{2,1}''$ and $F_{0,2}''$. The groups (48) from (A) and (44) from (B) are each contained in a copy of $F_{0,1,2,0}$, while group (45) from (B) is not; hence the first two are $G$-conjugate, and not conjugate to the third. Let $A$ be one of the subgroups contained in $F_{0,1,2,0}$, and let $B$ a subgroup from the other class. From the discussion in Case 2c) of Proposition~\ref{propCent} we have $C_G(A)=2^3\times L$ for $L\in \{2^3\times {\rm PSL}_2(K), T_3.2^3\}$, and thus $F_{0,1,2,0}=\langle A, u\rangle$ for some involution $u\in L$. Table \ref{tabsubs2_E7adNEW} now implies that $\dim C_G( F_{0,1,2,0})=0$, hence $\dim C_L(u)=0$, and so $L\ne 2^3\times{\rm PSL}_2(K)$ by \cite[Tables 4.3.1--4.3.3]{Gor}. It follows that $A=F_{2,1}''$ and $B=F_{0,2}''$.
 	  \end{ithm}
\end{remark}

\begin{table}\caption{Subgroups of maximal non-toral $2^7$ and $2^8$ in  $E_{7,\ad}$; a name  ``toral'' indicates that the group is toral because of its distribution, or is contained in such a group; a name ``(toral)'' indicates that the group has later been identified as toral.}
  \vspace*{-1ex}
\begin{minipage}[b]{.45\textwidth}
\scalebox{0.71}{
  \begin{tabular}{rlcc}
    \multicolumn{4}{c}{\bf (A) subgroups of $2^7$}\\
& Dist & Cent dim &  name\\ \hline
(1) & 2BC$_{1}$ & 69  & toral\\
(2) & 2BC$_{1}$ & 69  & toral\\
(3) & 4H$_{1}$ & 79  & toral \\
(4) & 4A$_{1}$ & 63  & toral\\ \hline
(5) & 2BC$_{3}$ & 37  & toral\\
(6) & 2BC$_{3}$ & 37  & toral\\
(7) & 2BC$_{1}$4H$_{2}$ & 47  & toral \\
(8) & 2BC$_{1}$4A$_{2}$ & 31   & toral \\
(9) & 2BC$_{3}$ & 37   & toral\\
(10) & 2BC$_{1}$4A$_{1}$4H$_{1}$ & 39   & toral\\
(11) & 2BC$_{1}$4A$_{2}$ & 31  & toral \\
(12) & 4H$_{3}$ & 52 &   $F_{0,0}$\\
(13) & 4A$_{2}$4H$_{1}$ & 36  & $F_{0,0,0,0}$\\
(14) & 4A$_{3}$ & 28 &$F_0'$ \\ \hline
(15) & 2BC$_{7}$ & 21   & toral\\
(16) & 2BC$_{7}$ & 21   & toral\\
(17) & 2BC$_{3}$4A$_{2}$4H$_{2}$ & 23   & toral\\
(18) & 2BC$_{3}$4A$_{4}$ & 15   & toral\\
(19) & 2BC$_{7}$ & 21 &$F_0''$\\
(20) & 2BC$_{3}$4A$_{3}$4H$_{1}$ & 19  & toral \\
(21) & 2BC$_{3}$4A$_{4}$ & 15 &$F_{0,1}''$\\
(22) & 2BC$_{1}$4A$_{3}$4H$_{3}$ & 24&$F_{0,1}$ \\
(23) & 2BC$_{1}$4A$_{5}$4H$_{1}$ & 16&$F_{1,0,0,0}$ \\
(24) & 2BC$_{1}$4A$_{6}$ & 12 &$F_1'$\\
(25) & 2BC$_{3}$4A$_{4}$ & 15   & toral\\
(26) & 2BC$_{1}$4A$_{4}$4H$_{2}$ & 20 &$F_{0,0,1,0}$\\
(27) & 2BC$_{1}$4A$_{6}$ & 12 &$F_1'$\\ 
(28) & 2BC$_{3}$4H$_{4}$ & 31   & toral\\
(29) & 2BC$_{1}$4H$_{6}$ & 36 &$F_{1,0}$\\ \hline
(30) & 2BC$_{15}$ & 13  & toral \\
(31) & 2BC$_{15}$ & 13 &$F_1''$\\
(32) & 2BC$_{7}$4A$_{6}$4H$_{2}$ & 11   & toral\\
(33) & 2BC$_{7}$4A$_{8}$ & 7 & $F_{1,1}''$ \\
(34) & 2BC$_{7}$4A$_{7}$4H$_{1}$ & 9 &$F_{0,3}'$ \\
(35) & 2BC$_{3}$4A$_{9}$4H$_{3}$ & 10 &$F_{0,2}$\\
(36) & 2BC$_{3}$4A$_{11}$4H$_{1}$ & 6 &$F_{0,1,0,0}$\\
(37) & 2BC$_{7}$4A$_{8}$ & 7  & toral \\
(38) & 2BC$_{3}$4A$_{10}$4H$_{2}$ & 8&$F_{1,0,1,0}$ \\
(39) & 2BC$_{3}$4A$_{12}$ & 4 &$F_2'$\\
(40) & 2BC$_{3}$4A$_{12}$ & 4 &$F_2'$\\
(41) & 2BC$_{7}$4A$_{4}$4H$_{4}$ & 15  & toral \\
(42) & 2BC$_{3}$4A$_{6}$4H$_{6}$ & 16 &$F_{1,1}$\\
(43) & 2BC$_{3}$4A$_{8}$4H$_{4}$ & 12 &$F_{0,0,2,0}$\\
(44) & 2BC$_{3}$4H$_{12}$ & 28 &$F_{2,0}$\\ \hline
(45) & 2BC$_{31}$ & 9 &$F_2''$\\
(46) & 2BC$_{15}$4A$_{14}$4H$_{2}$ & 5 &$F_{1,3}'$ \\
(47) & 2BC$_{7}$4A$_{21}$4H$_{3}$ & 3 &$F_{0,3}$\\
(48) & 2BC$_{15}$4A$_{16}$ & 3 & $F_{2,1}''$ \\
(49) & 2BC$_{7}$4A$_{22}$4H$_{2}$ & 2 &$F_{0,1,1,0}$\\
(50) & 2BC$_{7}$4A$_{24}$ & 0 &$F_3'$\\
(51) & 2BC$_{15}$4A$_{12}$4H$_{4}$ & 7   & toral\\
(52) & 2BC$_{7}$4A$_{18}$4H$_{6}$ & 6 &$F_{1,2}$\\
(53) & 2BC$_{7}$4A$_{20}$4H$_{4}$ & 4& $F_{1,0,2,0}$ \\ 
(54) & 2BC$_{7}$4A$_{12}$4H$_{12}$ & 12 &$F_{2,1}$\\ \hline
(55) & 2BC$_{31}$4A$_{28}$4H$_{4}$ & 3 &$F_{2,3}'$\\
(56) & 2BC$_{15}$4A$_{42}$4H$_{6}$ & 1 &$F_{1,3}$\\
(57) & 2BC$_{15}$4A$_{44}$4H$_{4}$ & 0 &$F_{0,1,2,0}$\\
(58) & 2BC$_{15}$4A$_{36}$4H$_{12}$ & 4 & $F_{2,2}$\\ \hline
(59) & 2BC$_{31}$4A$_{84}$4H$_{12}$ & 0  &$F_{2,3}$
\end{tabular}} 
\end{minipage} \hspace*{4ex}\begin{minipage}[b]{.45\textwidth}
\scalebox{0.67}{ 
  \begin{tabular}{rlcc}
    \multicolumn{4}{c}{\bf (B) subgroups of $2^8$}\\
& Dist & Cent dim &  name\\ \hline
(1)& 4A$_{1}$ & 63  & toral\\
 (2)&                  2BC$_{1}$ & 69   & toral\\
 (3)&                   4A$_{1}$ & 63   & toral\\
 (4)&                   4H$_{1}$ & 79   & toral\\ \hline
 (5)&            2BC$_{1}$4A$_{2}$ & 31    & toral\\
 (6)&                   4A$_{3}$ & 28  &  $F_0'$\\
 (7)&             4A$_{2}$4H$_{1}$ & 36 &$F_{0,0,0,0}$\\
 (8)&                  2BC$_{3}$ & 37   & toral\\
 (9)&            2BC$_{1}$4A$_{2}$ & 31 & toral \\
(10)&            2BC$_{1}$4H$_{2}$ & 47  & toral \\
(11)&                  2BC$_{3}$ & 37   & toral\\
(12)&      2BC$_{1}$4A$_{1}$4H$_{1}$ & 39   & toral \\ \hline
(13)&            2BC$_{3}$4A$_{4}$ & 15 &   (toral) \\
(14)&            2BC$_{1}$4A$_{6}$ & 12 &$F_1'$\\ 
(15)&      2BC$_{1}$4A$_{4}$4H$_{2}$ & 20 &$F_{0,0,1,0}$\\
(16)&            2BC$_{3}$4A$_{4}$ & 15 & $F_{0,1}''$\\
(17)&      2BC$_{1}$4A$_{5}$4H$_{1}$ & 16 & $F_{1,0,0,0}$\\
(18)&                  2BC$_{7}$ & 21  & toral \\
(19)&            2BC$_{3}$4A$_{4}$ & 15  & toral\\
(20)&                  2BC$_{7}$ & 21   & toral\\
(21)&      2BC$_{3}$4A$_{2}$4H$_{2}$ & 23  & toral \\
(22)&            2BC$_{3}$4H$_{4}$ & 31   & toral\\ 
(23)&      2BC$_{3}$4A$_{3}$4H$_{1}$ & 19  & toral \\
(24)&      2BC$_{3}$4A$_{1}$4H$_{3}$ & 27   & toral\\ \hline
(25)&            2BC$_{7}$4A$_{8}$ & 7 & (toral)\\
(26)&           2BC$_{3}$4A$_{12}$ & 4 &$F_2'$\\
(27)&            2BC$_{7}$4A$_{8}$ & 7 & $F_{1,1}''$\\
(28)&     2BC$_{3}$4$_{10}$4H$_{2}$ & 8 &$F_{1,0,1,0}$\\
(29)&      2BC$_{3}$4A$_{8}$4H$_{4}$ & 12 &$F_{0,0,2,0}$\\
(30)&     2BC$_{3}$4$_{11}$4H$_{1}$ & 6 &$F_{0,1,0,0}$ \\
(31)&      2BC$_{3}$4A$_{9}$4H$_{3}$ & 10 & $F_{0,0,0,1}$\\
(32)&            2BC$_{7}$4A$_{8}$ & 7   & toral\\
(33)&      2BC$_{7}$4A$_{6}$4H$_{2}$ & 11  & toral \\
(34)&      2BC$_{7}$4A$_{4}$4H$_{4}$ & 15  & toral \\
(35)&      2BC$_{7}$4A$_{4}$4H$_{4}$ & 15  & toral \\
(36)&      2BC$_{7}$4A$_{2}$4H$_{6}$ & 19   & toral\\
(37)&                 2BC$_{15}$ & 13  & toral \\
(38)&                 2BC$_{15}$ & 13  & toral \\ \hline
(39)&           2BC$_{7}$4A$_{24}$ & 0&$F_3'$ \\
(40)&     2BC$_{7}$4A$_{22}$4H$_{2}$ & 2 &$F_{0,1,1,0}$\\
(41)&     2BC$_{7}$4A$_{20}$4H$_{4}$ & 4 & $F_{1,0,2,0}$ \\
(42)&     2BC$_{7}$4A$_{20}$4H$_{4}$ & 4 & $F_{1,0,0,1}$ \\
(43)&     2BC$_{7}$4A$_{18}$4H$_{6}$ & 6 & $F_{0,0,1,1}$\\
(44)&          2BC$_{15}$4A$_{16}$ & 3 & $F_{2,1}''$\\
(45)&          2BC$_{15}$4A$_{16}$ & 3 & $F_{0,2}''$\\
(46)&    2BC$_{15}$4A$_{12}$4H$_{4}$ & 7   & toral\\
(47)&    2BC$_{15}$4A$_{10}$4H$_{6}$ & 9  & toral \\
(48)&     2BC$_{15}$4A$_{8}$4H$_{8}$ & 11   & toral\\
(49)&    2BC$_{15}$4A$_{6}$4H$_{10}$ & 13  & toral \\
(50)&                 2BC$_{31}$ & 9   & toral\\ \hline
(51)&    2BC$_{15}$4A$_{44}$4H$_{4}$ & 0 &$F_{0,1,2,0}$\\
(52)&    2BC$_{15}$4A$_{42}$4H$_{6}$ & 1 &$F_{0,1,0,1}$\\
(53)&    2BC$_{15}$4A$_{40}$4H$_{8}$ & 2 &$F_{1,0,1,1}$\\
(54)&   2BC$_{15}$4A$_{38}$4H$_{10}$ & 3 &$F_{0,0,0,2}$\\
(55)&          2BC$_{31}$4A$_{32}$ & 1 &$F_{1,2}''$\\
(56)&   2BC$_{31}$4A$_{20}$4H$_{12}$ & 7   & toral\\
(57)&   2BC$_{31}$4A$_{16}$4H$_{16}$ & 9  & toral \\
(58)&                 2BC$_{63}$ & 7   & toral\\ \hline
(59)&   2BC$_{31}$4A$_{84}$4H$_{12}$ & 0 &$F_{0,1,1,1}$\\
(60)&   2BC$_{31}$4A$_{80}$4H$_{16}$ & 1 &$F_{1,0,0,2}$ \\
(61)&          2BC$_{63}$4A$_{64}$ & 0&$F_{0,3}''$ \\
(62)&   2BC$_{63}$4A$_{36}$4H$_{28}$ & 7   & toral\\ \hline
(63)&  2BC$_{63}$4A$_{164}$4H$_{28}$ & 0 &$F_{0,1,0,2}$ 
\end{tabular}\label{fig27tab}}
\end{minipage}
\end{table}

\def\lwpt{0.8pt}

\begin{landscape}
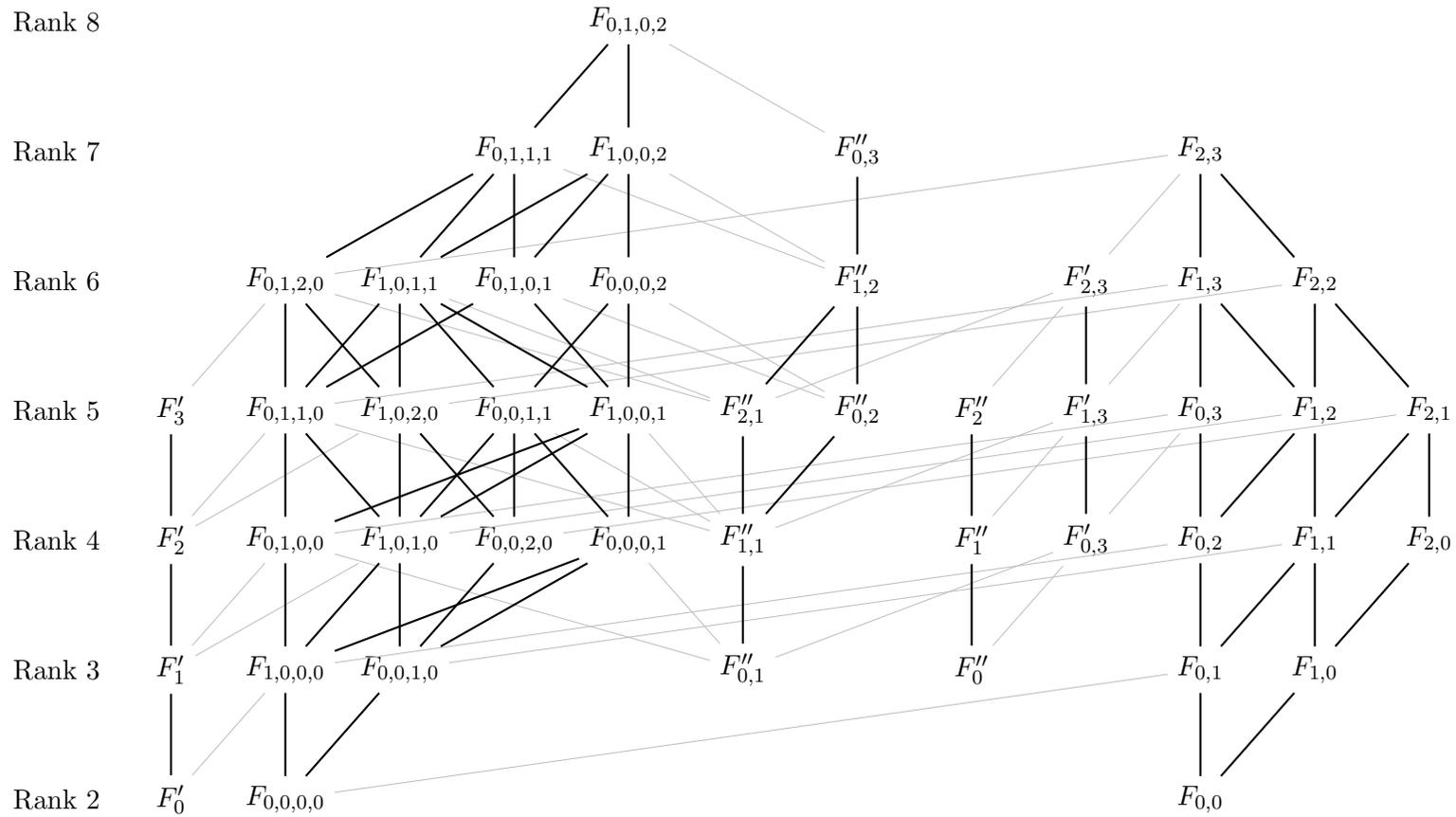
\begin{figure}\caption{Hasse diagram of all non-toral $2$-subgroups of $E_{7,{\ad}}$; cross-family inclusions in grey.}\label{figHasseAllE7}
\begin{tikzpicture}[yscale=0.9,xscale=0.8]
\node (0102) at (-2,16) {$F_{0,1,0,2}$};
\node (0111) at (-4,14) {$F_{0,1,1,1}$};
\node (1002) at (-2,14) {$F_{1,0,0,2}$};
\node (23) at (8,14) {$F_{2,3}$};
\node (03'') at (2,14) {$F_{0,3}''$};
\node (0120) at (-8,12) {$F_{0,1,2,0}$};
\node (0101) at (-4,12) {$F_{0,1,0,1}$};
\node (1011) at (-6,12) {$F_{1,0,1,1}$};
\node (0002) at (-2,12) {$F_{0,0,0,2}$};
\node (13) at (8,12) {$F_{1,3}$};
\node (22) at (10,12) {$F_{2,2}$};
\node (23') at (6,12) {$F_{2,3}'$};
\node (12'') at (2,12) {$F_{1,2}''$};
\node (0110) at (-8,10) {$F_{0,1,1,0}$};
\node (1020) at (-6,10) {$F_{1,0,2,0}$};
\node (1001) at (-2,10) {$F_{1,0,0,1}$};
\node (0011) at (-4,10) {$F_{0,0,1,1}$};
\node (03) at (8,10) {$F_{0,3}$};
\node (12) at (10,10) {$F_{1,2}$};
\node (21) at (12,10) {$F_{2,1}$};
\node (A) at (0,10) {$F_{2,1}''$};
\node (B) at (2,10) {$F_{0,2}''$};
\node (13') at (6,10) {$F_{1,3}'$};
\node (3') at (-10,10) {$F_{3}'$};
\node (2'') at (4,10) {$F_{2}''$};
\node (0100) at (-8,8) {$F_{0,1,0,0}$};
\node (1010) at (-6,8) {$F_{1,0,1,0}$};
\node (0020) at (-4,8) {$F_{0,0,2,0}$};
\node (0001) at (-2,8) {$F_{0,0,0,1}$};
\node (02) at (8,8) {$F_{0,2}$};
\node (20) at (12,8) {$F_{2,0}$};
\node (03') at (6,8) {$F_{0,3}'$};
\node (11) at (10,8) {$F_{1,1}$};
\node (11'') at (0,8) {$F_{1,1}''$};
\node (2') at (-10,8) {$F_{2}'$};
\node (1'') at (4,8) {$F_{1}''$};
\node (1000) at (-8,6) {$F_{1,0,0,0}$};
\node (0010) at (-6,6) {$F_{0,0,1,0}$};
\node (10) at (10,6) {$F_{1,0}$};
\node (01) at (8,6) {$F_{0,1}$};
\node (01'') at (0,6) {$F_{0,1}''$};
\node (1') at (-10,6) {$F_{1}'$};
\node (0'') at (4,6) {$F_{0}''$};
\node (0000) at (-8,4) {$F_{0,0,0,0}$};
\node (00) at (8,4) {$F_{0,0}$};
\node (0') at (-10,4) {$F_{0}'$};
\draw [lightgray] (0102) -- (03'');
\draw [lightgray] (23) -- (0120);
\draw [lightgray] (23) -- (23');
\draw [lightgray] (0111) -- (12'');
\draw [lightgray] (1002) -- (12'');
\draw [lightgray] (13) -- (0110);
\draw [lightgray] (22) -- (1020);
\draw [lightgray] (0120) -- (A);
\draw [lightgray] (23') -- (A);
\draw [lightgray] (1011) -- (A);
\draw [lightgray] (0101) -- (B);
\draw [lightgray] (0002) -- (B);
\draw [lightgray] (13) -- (13');
\draw [lightgray] (0120) -- (3');
\draw [lightgray] (23') -- (2'');
\draw [lightgray] (03) -- (0100);
\draw [lightgray] (12) -- (1010);
\draw [lightgray] (21) -- (0020);
\draw [lightgray] (03) -- (03');
\draw [lightgray] (0110) -- (11'');
\draw [lightgray] (1001) -- (11'');
\draw [lightgray] (0011) -- (11'');
\draw [lightgray] (13') -- (11'');
\draw [lightgray] (0110) -- (2');
\draw [lightgray] (1020) -- (2');
\draw [lightgray] (13') -- (1'');
\draw [lightgray] (11) -- (0010);  
\draw [lightgray] (03') -- (01'');
\draw [lightgray] (0100) -- (01'');
\draw [lightgray] (0001) -- (01'');
\draw [lightgray] (1010) -- (1');
\draw [lightgray] (0100) -- (1');
\draw [lightgray] (03') -- (0'');
\draw [lightgray] (01) -- (0000);
\draw [lightgray] (1000) -- (0');
\draw [lightgray] (02) -- (1000);
\draw [line width=\lwpt] (0102) -- (0111);
\draw [line width=\lwpt] (0102) -- (1002);
\draw [line width=\lwpt] (0111) -- (0120);
\draw [line width=\lwpt] (0111) -- (0101);
\draw [line width=\lwpt] (1002) -- (0101);
\draw [line width=\lwpt] (0111) -- (1011);
\draw [line width=\lwpt] (1002) -- (1011);
\draw [line width=\lwpt] (1002) -- (0002);
\draw [line width=\lwpt] (23) -- (13);
\draw [line width=\lwpt] (23) -- (22);
\draw [line width=\lwpt] (03'') -- (12'');
\draw [line width=\lwpt] (0120) -- (0110);
\draw [line width=\lwpt] (1011) -- (0110);
\draw [line width=\lwpt] (0101) -- (0110);
\draw [line width=\lwpt] (0120) -- (1020);
\draw [line width=\lwpt] (1011) -- (1020);
\draw [line width=\lwpt] (0101) -- (1001);
\draw [line width=\lwpt] (1011) -- (1001);
\draw [line width=\lwpt] (0002) -- (1001);
\draw [line width=\lwpt] (1011) -- (0011);
\draw [line width=\lwpt] (0002) -- (0011);
\draw [line width=\lwpt] (13) -- (03);
\draw [line width=\lwpt] (13) -- (12);
\draw [line width=\lwpt] (22) -- (12);
\draw [line width=\lwpt] (22) -- (21);
\draw [line width=\lwpt] (12'') -- (A);
\draw [line width=\lwpt] (12'') -- (B);
\draw [line width=\lwpt] (23') -- (13');
\draw [line width=\lwpt] (0110) -- (0100);
\draw [line width=\lwpt] (1001) -- (0100);
\draw [line width=\lwpt] (0110) -- (1010);
\draw [line width=\lwpt] (1020) -- (1010);
\draw [line width=\lwpt] (1001) -- (1010);
\draw [line width=\lwpt] (0011) -- (1010);
\draw [line width=\lwpt] (1020) -- (0020);
\draw [line width=\lwpt] (0011) -- (0020);
\draw [line width=\lwpt] (1001) -- (0001);
\draw [line width=\lwpt] (0011) -- (0001);
\draw [line width=\lwpt] (03) -- (02);
\draw [line width=\lwpt] (12) -- (02);
\draw [line width=\lwpt] (21) -- (20);
\draw [line width=\lwpt] (13') -- (03');
\draw [line width=\lwpt] (12) -- (11);
\draw [line width=\lwpt] (21) -- (11);
\draw [line width=\lwpt] (A) -- (11'');
\draw [line width=\lwpt] (B) -- (11'');
\draw [line width=\lwpt] (3') -- (2');
\draw [line width=\lwpt] (2'') -- (1'');
\draw [line width=\lwpt] (1010) -- (1000);
\draw [line width=\lwpt] (0100) -- (1000);
\draw [line width=\lwpt] (0001) -- (1000);
\draw [line width=\lwpt] (1010) -- (0010);
\draw [line width=\lwpt] (0020) -- (0010);
\draw [line width=\lwpt] (0001) -- (0010);
\draw [line width=\lwpt] (11) -- (10);
\draw [line width=\lwpt] (20) -- (10);
\draw [line width=\lwpt] (11) -- (01);
\draw [line width=\lwpt] (02) -- (01);
\draw [line width=\lwpt] (11'') -- (01'');
\draw [line width=\lwpt] (2') -- (1');
\draw [line width=\lwpt] (1'') -- (0'');
\draw [line width=\lwpt] (1000) -- (0000);
\draw [line width=\lwpt] (0010) -- (0000);
\draw [line width=\lwpt] (10) -- (00);
\draw [line width=\lwpt] (01) -- (00);
\draw [line width=\lwpt] (1') -- (0');
\node (Rank8) at (-12,16) {Rank 8};
\node (Rank7) at (-12,14) {Rank 7};
\node (Rank6) at (-12,12) {Rank 6};
\node (Rank5) at (-12,10) {Rank 5};
\node (Rank4) at (-12,8) {Rank 4};
\node (Rank3) at (-12,6) {Rank 3};
\node (Rank2) at (-12,4) {Rank 2};
\end{tikzpicture}
\end{figure}
\end{landscape}

\section{Details for $E_{8}$ and $p=2$}\label{appYuE8}

\noindent This section complements the results in Section \ref{secE8Yu}.

\subsection{The families of subgroups in \cite{Yu}}\label{appYuE8names}
 Yu \cite[\S 8]{Yu} classified all elementary abelian $2$-subgroups of $G$, separating these into six disjoint families depending on some parameters, as follows:

\begin{itemize}
	\item[(1)] $F_{r,s}$ $(r \le 2; s \le 3)$. 
	 Such a subgroup has rank $r + s + 3$. From \cite[Proposition 8.8]{Yu}, these have distribution $2\A_{x}\B_{y}$ where $x = 2^{r+s} + 6\cdot2^{r}$ and  $y = 7\cdot2^{r+s} - 6\cdot2^{r}-1$. Since there is no toral subgroup with distribution $2\A_{7}$, we conclude that $F_{0,0}$ is non-toral, hence all these subgroups are non-toral as they all contain $F_{0,0}$.
	\item[(2)] $F_{r,s}'$ $(r,s \le 2)$. 
	Such a subgroup has rank $r + s + 2$. Its distribution is given by \cite[Proposition 8.8]{Yu}. The subgroup $F_{2,2}'$ turns out to be the unique subgroup with distribution $2\A_{24}\B_{39}$, which matches a toral subgroup returned by our algorithm. Thus, $F_{2,2}'$ and all its subgroup $F_{r,s}'$ are toral.
	\item[(3)] $F_{\epsilon,\delta,r,s}$ $(\epsilon + \delta \le 1; r + s \le 2)$. 
	Such a subgroup has rank $\epsilon + r + 2(\delta + s) + 3$. According to the discussion after \cite[Definition 8.15]{Yu}, these subgroups have distribution $2\A_{x}\B_{y}$ where %
	$x =	2^{\epsilon + r + 2(\delta + s) + 1} %
			+ (1 - \epsilon)(-1)^{\delta}2^{r + s + \delta}$ and  %
	$y =	3 \cdot 2^{\epsilon + r + 2(\delta + s) + 1}
 			- (1 - \epsilon)(-1)^{\delta}2^{r + s + \delta}
			- 1$.
	Note that this does not agree with \cite[Proposition 8.8(3)]{Yu}, which is missing an exponent `$2$' in a given formula. In particular, $F_{0,0,0,0}$ has distribution $2\A_{3}\B_{4}$, which it shares with no toral subgroup. Hence $F_{0,0,0,0}$ is non-toral, as are all the subgroups~$F_{\epsilon,\delta,r,s}$.
	\item[(4)] $F_{\epsilon,\delta,r,s}'$ $(\epsilon + \delta \le 1; r + s \le 2; 1\leq s )$. 
	Such a subgroup has rank $\epsilon + r + 2(s + \delta) + 2$. By \cite[Proposition 8.8(4)]{Yu} this has distribution $2\A_{x}\B_{y}$ where $x = 2^{\epsilon + r + 2(\delta + s)+1} + (-1)^{\delta}(1-\epsilon)2^{r + s + \delta}$ and $y = 2^{\epsilon + r + 2(\delta + s) + 1} - (-1)^{\delta}(1 - \epsilon)2^{r + s + \delta} - 1$. In particular $F_{0,1,0,2}'$ is the unique subgroup of rank $8$ with distribution $2\A_{120}\B_{135}$, hence is toral, as are all of its subgroups $F_{\epsilon,\delta,r,s}'$.
	\item[(5)] $F_{r}'$ $(r \le 5)$. 
	Such a subgroup has rank $r$. By \cite[(9.2)]{Griess}, the groups $F_{r}'$ with $r \le 4$ are toral, and $F_{5}'$, the unique $2\B$-pure subgroup of rank $5$, is non-toral.
	\item[(6)] $F_{r,s}''$ $(r \le 3; s \le 2)$. 
	Such a subgroup has rank $r + s + 1$. Its distribution follows from \cite[Definition 8.15]{Yu}, however this contains several typographic errors so we give this description again for completeness. We have decompositions
\[F_{r,0}'' = A_{r} \times B,\quad F_{r,1}'' = A_{r} \times C,\quad F_{r,2}'' = A_{r} \times D,\]
where $A_{r}$ is a $2\B$-pure subgroup of rank $r$, $B$ is a $2\A$-pure subgroup of rank~$1$, and $C$ and $D$ are subgroups of rank $2$ and $3$ respectively, each containing a unique element from class $2\A$. Then \cite[Definition~8.15]{Yu} states that an element lies in class $2\A$ precisely when $x_2 \in 2\A$ in the decomposition $x = (x_1,x_2)$ arising from the above. In particular $\Dist(F_{r,s}'') = 2\A_{\phantom{(}\!\!2^{r}}\B_{2^{r}(2^{s+1}-1) - 1}$. From this and from inspecting the distributions of the remaining toral subgroups, we find that $F_{r,s}''$ is toral when $s \in \{0,1\}$, and is non-toral when $s = 2$.
\end{itemize}
This gives $12+9+18+9+12+6 = 66$ classes of subgroups in all; the non-toral ones are listed in  Table~\ref{tabsubs2_E8}.

\subsection{Normaliser quotients}\label{appYuE8norm}
For each elementary abelian $E$ as above, the quotient $N_{G}(E)/C_{G}(E)$ is determined in \cite[Proposition 8.17]{Yu}. Adopting the same notation for matrix groups as in the $E_{7,\ad}$ case, the normaliser structure is as in Table~\ref{tabE8norm}. Here a wreath product $\SL_{3}(2) \wr 2$ denotes an action on a direct sum of two copies of the natural $3$-dimensional module. The group $S(s;\epsilon,\delta)$ is the group  $\Sp(s + \epsilon + 2\delta;\epsilon,(1 - \epsilon)(1-\delta))$ in \cite[p.\ 259]{Yu}. From the description there it is not hard to show that
\begin{align*}
S(s,\epsilon,\delta) &=
\left\{ \begin{array}{ll}
\SO_{2s+4}^{+}(2) & \text{: } (\epsilon,\delta) = (0,1) \\
\SO_{2s + 3}(2) & \text{: } (\epsilon,\delta) = (1,0) \\
\SO_{2s + 2}^{-}(2) & \text{: } (\epsilon,\delta) = (0,0). \\
\end{array}
\right.
\end{align*}
where each group is acting in its natural orthogonal representation.

\begin{table}
\caption{Normaliser quotients of the non-toral $2$-subgroups of the algebraic group $E_8$}\label{tabE8norm}
\begin{tabular}{c|c}
$E$ & $N_{G}(E)/C_{G}(E)$ \\ \hline \Bstrut
$F_{r,s}$ $(s \le 2)$ &
$\begin{pmatrix}
\SL_{r}(2) & \ast 		& \ast \\
\mathbf{0} & \SL_{s}(2) & \mathbf{0} \\
\mathbf{0} & \mathbf{0} & \SL_{3}(2)
\end{pmatrix}$ \\[4ex]
$F_{r,3}$ & 
$\begin{pmatrix}
\SL_{r}(2) & \ast \\
\mathbf{0} & \SL_{3}(2) \wr 2
\end{pmatrix}$ \\[2ex]
$F_{\epsilon,\delta,r,s}$ & 
$\begin{pmatrix}
\SL_{r}(2) & \ast_{r,2s+\epsilon+2\delta+2} & \ast_{r} \\
\mathbf{0} & S(s;\epsilon,\delta) & \ast_{2s + \epsilon + 2\delta + 2} \\
\mathbf{0} & \mathbf{0} & 1 \\
\end{pmatrix}$ \\[4ex]
$F_{r,s}''$ & 
$\begin{pmatrix}
\SL_{r}(2) 	& \ast 			& \ast \\
\mathbf{0}	& 1				& \ast \\
\mathbf{0}	& \mathbf{0}	& \SL_{s}(2)
\end{pmatrix}$ \\[4ex]
$F_{r}'$ &
$\SL_{r}(2)$
\end{tabular}
\end{table}

\bibliographystyle{amsplain}

\bibliography{elabAlg}

\end{document}